\RequirePackage{rotating}
\documentclass[12pt]{amsart}
\usepackage[graphicx]{realboxes}
\usepackage{float}
\restylefloat{table}
\usepackage{placeins}

\usepackage[T1]{fontenc}
\usepackage[utf8]{inputenc}
\usepackage{enumitem}
\usepackage{amsmath}
\usepackage{amssymb}
\usepackage{epsfig}
\usepackage{wasysym}
\usepackage{graphicx}
\usepackage{tikz}
\usepackage{tikz-cd}
\usepackage{bm}
\usepackage{xcolor}
\usepackage{listings}
\numberwithin{equation}{section}
\usepackage{extpfeil}
\usepackage{array}
\usepackage{adjustbox}
\usepackage{longtable}
\usepackage{makecell}
\usepackage[all]{xy}
\usepackage{longtable}
\usepackage{rotating}
\input xy
\xyoption{all}
\usepackage{multirow}

\usetikzlibrary{positioning}
\usepackage[colorlinks=false,urlbordercolor=white]{hyperref}
  
\tikzset{sgplattice/.style={inner sep=1pt,norm/.style={red!50!blue},char/.style={blue!50!black},
  lin/.style={black!50}},cnj/.style={black!50,yshift=-2.5pt,left=-1pt of #1,scale=0.5,fill=white}}

\DeclareFontFamily{U}{mathb}{\hyphenchar\font45}
\DeclareFontShape{U}{mathb}{m}{n}{
      <5> <6> <7> <8> <9> <10> gen * mathb
      <10.95> mathb10 <12> <14.4> <17.28> <20.74> <24.88> mathb12
      }{}
\DeclareSymbolFont{mathb}{U}{mathb}{m}{n}
\DeclareMathSymbol{\righttoleftarrow}{3}{mathb}{"FD}

\calclayout
\allowdisplaybreaks[3]

\theoremstyle{plain}
\newtheorem{prop}{Proposition}[section]

\newtheorem{theo}[prop]{Theorem}
\newtheorem{coro}[prop]{Corollary}

\newtheorem{lemm}[prop]{Lemma}

\theoremstyle{definition}
\newtheorem{defi}[prop]{Definition}

\newtheorem*{prob*}{Problem}

\newtheorem{rema}[prop]{Remark}

\newcommand{\actsfromleft}{\mathrel{\reflectbox{$\righttoleftarrow$}}}
\newcommand{\actsfromright}{\righttoleftarrow}

\def\lra{\longrightarrow}

\def\rk{\mathrm{rk}}

\def\fA{{\mathfrak A}}

\def\fC{{\mathfrak C}}
\def\fD{{\mathfrak D}}

\def\fF{{\mathfrak F}}

\def\fS{{\mathfrak S}}

\def\fS{{\mathfrak S}}

\def\bG{{\mathbb G}}
\def\bP{{\mathbb P}}

\def\bF{{\mathbb F}}

\def\Pic{\mathrm{Pic}}

\def\Aut{\mathrm{Aut}}

\def\SL{\mathsf{SL}}
\def\PSL{\mathsf{PSL}}
\def\GL{\mathsf{GL}}

\def\PGL{\mathsf{PGL}}

\def\Out{\mathrm{Out}}

\def\Burn{\mathrm{Burn}}
\def\Bir{\mathrm{Bir}}

\def\lim{\mathrm{lim}}

\def\Cr{\mathrm{Cr}}

\makeatother
\makeatletter

\begin{document}

\title[Actions on del Pezzo surfaces]{Birational geometry of actions \\on del Pezzo surfaces}

\author[I. Cheltsov]{Ivan Cheltsov}
\address{Department of Mathematics, University of Edinburgh, UK}
\email{I.Cheltsov@ed.ac.uk}

\author[Y. Tschinkel]{Yuri Tschinkel}
\address{
  Courant Institute,
  251 Mercer Street,
  New York, NY 10012, USA
}

\email{tschinkel@cims.nyu.edu}

\address{Simons Foundation\\
160 Fifth Avenue\\
New York, NY 10010\\
USA}

\author[Zh. Zhang]{Zhijia Zhang}

\address{
Courant Institute,
  251 Mercer Street,
  New York, NY 10012, USA
}

\email{zz1753@nyu.edu}

\date{\today}

\begin{abstract}
We complete the classification of regular generically free actions of finite groups on del Pezzo surfaces, up to birational equivalence. As a byproduct, we settle several open problems in equivariant birational geometry, e.g., we classify birationally rigid actions on del Pezzo surfaces.
\end{abstract}

\maketitle

\section{Introduction}
\label{sect:intro}

Let $k$ be an algebraically closed field of characteristic zero and 
$$
\Cr_2=\Cr_2(k)=\langle \PGL_3(k), \iota\rangle, 
$$ 
the plane Cremona group; here 
$$
\iota : (x,y,z)\mapsto (\frac{1}{x},\frac{1}{y},\frac{1}{z})
$$
is the Cremona involution. 
The classification of {\em finite subgroups} of $\Cr_2$, up to conjugation,  
has a long history: from the classification of involutions \cite{beauville}, cyclic groups \cite{defernex}, abelian groups \cite{Blanc}, to 
the fundamental paper by Dolgachev--Iskovskikh \cite{DI} classifying finite subgroups of $\Cr_2$. We summarize the methods that allowed one to classify finite groups that can act on rational surfaces: 
\begin{itemize} 
\item 
An embedding of a finite group $G\hookrightarrow \Cr_2$ is given by a regular 
generically free action on a smooth projective rational surface $S$. By the $G$-equivariant Minimal Model Program, we can reduce to the case when $S$ is either  a del Pezzo surface, with invariant Picard rank $\rk\Pic(S)^G=1$, or a $G$-equivariant conic bundle $\pi:S\to \bP^1$ and $\rk\Pic(S)^G=2$. 
\item 
There is an almost complete classification of finite groups acting regularly and generically freely on del Pezzo surfaces \cite{DI}. The classification of automorphisms of conic bundles is more involved, because there are infinitely many families. 
\end{itemize}

The related problem of classification of {\em actions} (raised in, e.g., \cite{popov}) can be approached via the following steps, see \cite[Section 2]{CTZ-linear}:
\begin{itemize}
\item {\bf (A)} classification of  $G\subseteq \Aut(S)$, up to conjugation in $\Aut(S)$, 
\item {\bf (B)} classification of $G\subseteq \Aut(S)$, up to conjugation in $\Cr_2$,
\item {\bf (AA)} classification of actions, up to conjugation in $\Aut(S)$, 
\item {\bf (BA)} classification of actions, up to conjugation in $\Cr_2$. 
\end{itemize}
Assuming that both {\bf (A)} and {\bf (B)} are solved for $G$, we consider homomorphisms 
$$
\phi_1,\phi_2: G\hookrightarrow \Aut(S),
$$ 
with $\phi_1(G)=\phi_2(G)$, such that the {\em actions} given by $\phi_1$ and $\phi_2$ are different, i.e., not conjugated in $\Aut(S)$. 
If they are are conjugated in $\Cr_2$, then 
\begin{equation} 
\label{eqn:main}
[S\actsfromright \phi_1(G)] = [S\actsfromright \phi_2(G)] \in \Burn_2(G),
\end{equation}
where $\Burn_2(G)$ is the equivariant Burnside group, introduced in \cite{BnG}. 
The converse does not always hold, see \cite[Example 7.2]{TYZ-3}, but it holds for {\em intransitive} actions on $S=\bP^2$, by \cite[Theorem 1.1]{CTZ-linear}. 

Let $\Aut^G(S)$ and $\Bir^G(S)$ be  the normalizers of $G$ in the groups of biregular, respectively, 
birational automorphisms of $S$. We have natural homomorphisms
\begin{equation} 
\label{eqn:birg}
\Aut^G(S)\hookrightarrow \Bir^G(S) \stackrel{\bar{\beta}}{\lra}
\Out(G),
\end{equation}
where $\Out(G)$ is the group of outer automorphisms of $G$, 
see \cite[Section 2]{CTZ-linear} for more details.
If 
$$
\bar{\beta}(\Aut^G(S))=\bar{\beta}(\Bir^G(S))
$$
then Problems {\bf (AA)} and {\bf (BA)} coincide and are purely group-theoretic. Therefore, we mostly focus on birational issues, when the images are different. 
In \cite{CTZ-linear}, we addressed Problem {\bf (BA)} for {\em linear actions}, i.e., for subgroups of $\PGL_3$.  Here, we turn to {\em nonlinear} actions, more precisely, regular actions on del Pezzo surfaces that are not (projectively) linearizable \cite{TYZ-3}. We leave out nonlinear actions on conic bundles.

Let $S$ be a del Pezzo surface of anticanonical degree 
$$
d(S):=(-K_X)^2\leq 9,
$$
by definition, smooth. 
Let $G\subseteq \Aut(S)$  be a finite subgroup such that $\rk\Pic(S)^G=1$. Then 
$$
d(S)\in\{1,2,3,4,5,6,8,9\},
$$ 
If $d(S)=9$, then $S=\mathbb{P}^2$,  if $d(S)=8$, then $S=\mathbb{P}^1\times\mathbb{P}^1$.
If $d(S)\ne 9$, then the $G$-action on $S$ is linearizable if and only if it is projectively linearizable, i.e., there exists a $G$-equivariant birational map $S\dasharrow\mathbb{P}^2$. The linearization problem has been settled in~\cite{PSY}.
We record known results from \cite{Isk,DI,PSY,CTZ-linear}:
\begin{itemize} 
\item If $d(S)=1$, then the $G$-action on $S$ is not linearizable, and
$$
\Bir^G(S)=\Aut^G(S),
$$
\item If $d(S)=2$ or $3$, then the $G$-action on $S$ is not linearizable, and
$$
\bar{\beta}(\Bir^G(S))=\bar{\beta}(\Aut^G(S)).
$$
\item If $d(S)=4$, then the $G$-action on $S$ is not linearizable.
\item If  $d(S)=5$, then $\mathrm{Aut}(S)\simeq\mathfrak{S}_5$, and $G$ is one of the following subgroups:
$$
\fC_5, \quad \mathfrak{D}_5, \quad \mathfrak F_5=\mathsf{AGL}_1(\bF_5), \quad \fA_5, \quad \text{ or }  \quad \fS_5.
$$
The $G$-action on $S$ is linearizable if and only if $G\simeq \fC_5$ or $\mathfrak{D}_5$.
Furthermore, if $G\simeq\fA_5$ or $G\simeq\fS_5$, then  $\Bir^G(S)=\Aut^G(S)$.
If $G\simeq\mathfrak F_5$, then $\Out(G)$ is trivial.
\item If $d(S)=6$, then the $G$-action on $S$ is linearizable if and only if $G\simeq \fC_6$ or $\mathfrak{S}_3$.
\item If $d(S)=8$, then the $G$-action on $S$ is linearizable if and only if $S^G\ne\varnothing$, i.e., $G$ fixes a point in $S$.
\item The case $d(S)=9$ has been treated in \cite{CTZ-linear}. 
\end{itemize}

\ 
In particular, Problem {\bf (BA)} is already solved for $d(S)\in\{1,2,3,5,9\}$. 
In this paper, we solve Problem {\bf (BA)} in the remaining cases: 
\begin{itemize}
\item $S=\mathbb{P}^1\times\mathbb{P}^1$, 
\item $S$ is the del Pezzo surface of degree $6$, 
\item $S$ is a del Pezzo surface of degree $4$.
\end{itemize}
In detail, in Section~\ref{sect:quad}, we complete the classification of finite group actions on $S=\bP^1\times \bP^1$, started in \cite{DI,PSY}, and use it to describe generators of $\Bir^G(S)$. 
In Section~\ref{sect:dp6}, we do the same for the del Pezzo surface of degree $6$.
We consider del Pezzo surfaces of degree $4$ in Section~\ref{sect:dp4}.

\ 

Our analysis implies several results of independent interest. The following completes investigations that started in \cite{Segre,Manin-I,DI,Cheltsov2008,two-local,sako,pinardin-solid,yasinski,PSY}: 

\begin{theo}
\label{thm:rigid} 
Let $S$ be a del Pezzo surface and $G\subseteq \Aut(S)$ a finite subgroup such that $\rk\Pic(S)^G=1$. Then $S$ is $G$-birationally rigid if and only if one of the following holds:
\begin{itemize}
    \item $d(S)\le 3$,
    \item $d(S)=4$, and $G\not\simeq$ $\fC_2^2$, $\fD_4$, $\fC_8$, $\fC_2\times \fC_6$, $\fC_3\rtimes \fC_4$, $\fC_3\rtimes \fD_4$, 
    \item $d(S)=5$, and $G\simeq\fA_5, \fS_5$, 
    \item $d(S)=6$, and $G\not\simeq \fC_3\times \fS_3$, $\fC_2\times \fS_3$, $\fS_3$, $\fC_6$,
    \item $S\simeq \bP^1\times \bP^1$, $S$ has no orbits of length $1$ or $2$, and $G\not\simeq \fD_6,\fF_5$, 
    \item $S\simeq\bP^2$, $S$ has no orbits of length 1, and $G\not\simeq \fA_4, \fS_4$. 
\end{itemize}
\end{theo}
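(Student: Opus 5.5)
The proof goes degree by degree. Throughout, I use the $G$-equivariant Sarkisov program: $S$ is $G$-birationally rigid if and only if every $G$-Sarkisov link starting at $S$ returns to $S$, i.e.\ is a birational self-map. By the classification of equivariant links on surfaces (Iskovskikh), a link from a $G$-del Pezzo surface with $\rk\Pic(S)^G=1$ begins by blowing up a $G$-orbit of length $r<d(S)$ in general position. The new surface must again be del Pezzo, or must carry a $G$-conic bundle structure. So in each degree the task is to list the orbits of small length and check where the corresponding links end.

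\textbf{Degrees at most $3$.} For $d(S)\le 3$, rigidity is classical (Segre--Manin, Iskovskikh, Dolgachev--Iskovskikh). Every link comes from an orbit of length $1$ or $2$, and these give Geiser and Bertini involutions, which are self-maps. So these cases are rigid.

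\textbf{Degree $5$.} This is settled by the list recalled in the introduction. For $\fC_5$ and $\fD_5$ the action is linearizable, so $S$ is not rigid. For $\fF_5$ there is a link to a conic bundle or to $\bP^2$; I expect the non-rigidity of $\fF_5$ to be checked in Section~\ref{sect:dp6} or by an explicit link. For $\fA_5$ and $\fS_5$ the surface $S$ has no orbits of length less than $5$, so it is rigid.

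\textbf{Degrees $9$ and $8$.} For $\bP^2$, an orbit of length $1$ gives a conic bundle, so such $S$ is not rigid. Orbits of length $3,6,7,8$ give links, and among the primitive groups only those with such orbits leading to a different model fail to be rigid. I expect this to come from \cite{CTZ-linear} and \cite{sako}, and to single out exactly $\fA_4$ and $\fS_4$, which admit a link to $\bP^1\times\bP^1$ via a length-$4$ orbit, or rather via an equivariant map to a quadric. For $\bP^1\times\bP^1$, an orbit of length $1$ or $2$ gives a link to $\bP^2$ or to a degree-$6$ surface. Orbits of length $3$ give links to $\bP^1\times\bP^1$ or to conic bundles, and I will use the classification of Section~\ref{sect:quad} to see that these are always self-links except for $\fD_6$ and $\fF_5$ (the latter through a degree-$5$ surface).

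\textbf{Degrees $6$ and $4$.} For $d(S)=6$, I use the group list of Section~\ref{sect:dp6}. Orbits of length $1$ give links to $\bP^1\times\bP^1$ or $\bP^2$, and orbits of length $2$ give links to degree-$4$ surfaces or to conic bundles. The listed groups are exactly those with fixed points or length-$2$ orbits whose links leave $S$. For $d(S)=4$, orbits of length $1,2,3$ give links to degree-$3$ or degree-$2$ surfaces, and those are Bertini/Geiser-type self-links. The only escape is to a conic bundle, which happens precisely when $G$ preserves a conic bundle class on $S$ with its full rank-one invariant condition. I will enumerate via the classification of $G\subseteq\fC_2^4\rtimes W$ from Section~\ref{sect:dp4}. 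The main obstacle is this degree-$4$ enumeration: showing that exactly the six listed groups admit a non-trivial link, by exhibiting the orbit or pencil explicitly, and excluding all others via orbit-length bounds.
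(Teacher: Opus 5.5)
Your overall strategy (list the $G$-orbits of small length in general position, run the corresponding $G$-Sarkisov links, and check whether they return to $S$ with the same action) is the one the paper follows. Your treatment of $d(S)\le 3$ and of $\fC_5,\fD_5,\fA_5,\fS_5$ is fine. But the table of links you use is wrong in several places, and in degrees $4$ and $6$ the resulting criterion gives the wrong list.

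In degree $4$, a $G$-fixed point $P$ does \emph{not} produce a self-link. Blowing it up gives a cubic surface $\tilde S$ with $\rk\Pic(\tilde S)^G=2$, and its other extremal contraction is the conic bundle induced by projection from $T_P(S)$, as in \eqref{Vasya}. Only orbits of length $2$ and $3$ give Geiser and Bertini involutions. Your substitute condition, that $G$ preserves a conic bundle class on $S$, can never hold when $\rk\Pic(S)^G=1$. Taken literally, your argument would therefore make every group rigid. The missing criterion is the one quoted from \cite{PSY} in Section~\ref{sect:dp4}: $S$ is $G$-birationally rigid if and only if $S^G=\varnothing$. The six excluded groups are exactly the entries of the degree-$4$ table that fix a point, so no explicit pencils are needed.

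In degree $6$, a length-$1$ orbit links only to $\bP^1\times\bP^1$. A length-$2$ orbit links to another sextic del Pezzo surface, not to a quartic one or to a conic bundle. More seriously, you omit length-$3$ orbits, which is exactly where $\fC_3\times\fS_3$ comes from. The groups $G_5$ and $G_6$ have no orbits of length $1,2,4,5$. Their length-$3$ orbit gives the link $\tau$ of \eqref{eqn:taudp6}, which lands on $S$ again but with $\tau G_5\tau=G_6$. This subgroup is not conjugate to $G_5$ in $\Aut(S)$, since already $\nu(G_5)\not\simeq\nu(G_6)$. So ending on the same surface does not make a link a $G$-birational self-map. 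For $G_4\simeq\fS_3^2$, by contrast, the same $\tau$ normalizes $G_4$ and rigidity survives. Your criterion (fixed points or length-$2$ orbits) would therefore call $\fC_3\times\fS_3$ rigid.

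You also need the reduction step. Links blow up orbits of length at most $5$ lying off the six lines, where $G_T$ acts freely. Together with Proposition~\ref{prop:dp6}, this forces $|G_T|\in\{1,3,4\}$ and $G\in\{G_1,\dots,G_9\}$.

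Similar misidentifications occur elsewhere:

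On $\bP^1\times\bP^1$, a length-$3$ orbit always links to a sextic del Pezzo surface and a length-$5$ orbit to a quintic one; lengths $4,6,7$ give Yasinsky, Geiser and Bertini involutions. The real question is which groups have such orbits, and that is what the $\nu(G)$, $G_T$ analysis of Section~\ref{sect:quad} answers.

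On the quintic, the $\fF_5$-link comes from its length-$2$ orbit and ends at $\bP^1\times\bP^1$. A link to a conic bundle or to $\bP^2$ is impossible, since this action is $G$-solid and not linearizable.

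On $\bP^2$, the length-$4$ orbit of $\fA_4$ and $\fS_4$ links to a conic bundle (the pencil of conics through the four points), not to $\bP^1\times\bP^1$.
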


The condition $\rk\Pic(S)^G=1$ means that $S$ is a $G$-Mori fiber space (over a point), and $G$-birational rigidity means that $S$ is the unique $G$-Mori fiber space that is $G$-birational to $S$. In \cite{DI}, Dolgachev and Iskovskikh listed all $G$ that can act on a del Pezzo surface $S$ with $\rk\Pic(S)^G=1$. We present a refined classification in Appendix~\ref{sect:tables}, correcting minor inaccuracies. 

 \ 
 
Theorem~\ref{thm:rigid} yields the main result of \cite{yasinski}:

\begin{coro}
\label{coro:yas}
Let $S$ be a del Pezzo surface and $H\subset G\subseteq \Aut(S)$ finite subgroups such that 
$$
\rk\Pic(S)^H=\rk\Pic(S)^G=1.
$$
If $S$ is $H$-birationally rigid, then $S$ is $G$-birationally rigid. 
\end{coro}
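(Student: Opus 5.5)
We deduce Corollary~\ref{coro:yas} from Theorem~\ref{thm:rigid} by going through the list there degree by degree. The key observation is that, for each $d(S)$, the conditions in Theorem~\ref{thm:rigid} are either empty or exclude only a short list of groups, and the excluded groups are all small. So it suffices to show that if $H\subset G$ and $H$ satisfies the rigidity condition, then $G$ satisfies it as well. Since $H$ and $G$ act on the same $S$, the degree $d(S)$ is the same for both.

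The cases are as follows.

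\begin{itemize}
\item If $d(S)\le 3$, the condition holds for every group, and there is nothing to prove.

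\item If $d(S)=5$ and $H\simeq\fA_5$ or $\fS_5$, then $G\supseteq H$ lies in $\Aut(S)\simeq\fS_5$. Hence $G\simeq\fA_5$ or $\fS_5$.

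\item If $S\simeq\bP^2$, then orbits of $G$ are unions of $H$-orbits. So if $H$ has no fixed point, neither does $G$. It remains to rule out $G\simeq\fA_4$ or $\fS_4$ when $H$ is rigid. Since $H$ has no fixed point and $\rk\Pic^H=1$ holds automatically, $H$ would be a subgroup of $\fA_4$ or $\fS_4$ without fixed points and not isomorphic to $\fA_4$ or $\fS_4$. The candidates are $\fC_2^2$, $\fD_4$ and the like. Linear actions of these groups on $\bP^2$ coming from a 3-dimensional representation of $\fS_4$ are intransitive or imprimitive and not rigid (cf.\ \cite{CTZ-linear}), which contradicts the rigidity of $H$. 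I would check this against the list in \cite{CTZ-linear}, noting which groups appear in the rigid list.

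\item The $\bP^1\times\bP^1$ case is the same: orbits of length $1$ or $2$ for $G$ restrict to unions of $H$-orbits of total length $\le 2$. So $H$ would have such orbits. Then one must check that $\fD_6$ and $\fF_5$ contain no rigid subgroup $H$ with $\rk\Pic^H=1$. The subgroups of $\fD_6$ and $\fF_5$ are cyclic or dihedral of small order, and Theorem~\ref{thm:rigid} shows directly that these are non-rigid: such actions have orbits of length $\le 2$ or are among the excluded groups.

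\item For $d(S)=6$ and $d(S)=4$, the excluded lists are closed under taking subgroups, up to the constraint $\rk\Pic^H=1$. For $d=6$ the excluded groups are $\fC_3\times\fS_3$, $\fC_2\times\fS_3$, $\fS_3$, $\fC_6$. Their subgroups with $\rk\Pic=1$ on the dP6 are among these same groups, because $\rk\Pic^H=1$ requires $H$ to surject onto the $\fC_6$-type subgroup of the Weyl group $W=\fD_6$ acting transitively on the hexagon. For $d=4$ the excluded groups are $\fC_2^2$, $\fD_4$, $\fC_8$, $\fC_2\times\fC_6$, $\fC_3\rtimes\fC_4$, $\fC_3\rtimes\fD_4$. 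One checks from the appendix tables that every subgroup $H$ of one of these with $\rk\Pic(S)^H=1$ is again in the list. For example, a subgroup of $\fC_3\rtimes\fD_4$ acting minimally must contain enough of the 2-part, and $\fC_3$ alone fixes Picard classes.
\end{itemize}

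The main obstacle is this last case-check in degree $4$, and also in degree $6$. There I would rely on the tables in Appendix~\ref{sect:tables} giving all minimal $G$ with their images in $W(D_5)$, and verify that no minimal subgroup of an excluded group lies outside the excluded list.
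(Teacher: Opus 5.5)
Your plan, reading the corollary off Theorem~\ref{thm:rigid} degree by degree, is how the paper obtains it; the paper gives no further argument. The cases $d(S)\le 3$, $d(S)=5$ and $\bP^1\times\bP^1$ go through.

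The $\bP^2$ step is wrong as written, though the conclusion is right.
\begin{itemize}
\item Subgroups such as $\fC_2^2$ or $\fD_4$ of $\fS_4$ are not fixed-point-free on $\bP^2$, so they are not candidates at all.
\item The principle you fall back on, that imprimitive actions are not rigid, is false. By Theorem~\ref{thm:rigid} itself, every fixed-point-free group other than $\fA_4,\fS_4$ is rigid. An example is the monomial $\fC_3^2$ generated by $\mathrm{diag}(1,\zeta_3,\zeta_3^2)$ and the cyclic permutation of coordinates.
\end{itemize}
What you actually need is the following. A fixed-point-free $G\subset\PGL_3$ with $G\simeq\fA_4$ or $\fS_4$ comes from an irreducible $3$-dimensional linear representation: the lifting obstruction lies in the Schur multiplier $\fC_2$ and is killed by $3$, so it vanishes. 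The restriction of this representation to any proper subgroup other than $\fA_4\subset\fS_4$ is reducible, since those subgroups are abelian, $\fS_3$ or $\fD_4$. Such a subgroup therefore fixes a point. Hence a fixed-point-free $H\subseteq G$ is itself isomorphic to $\fA_4$ or $\fS_4$, and so is not rigid.

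The remaining cases are correct but can be tightened.
\begin{itemize}
\item In degree $4$ no subgroup check is needed. By the criterion of \cite{PSY} recalled in Section~\ref{sect:dp4}, rigidity is equivalent to $S^G=\varnothing$, and $S^G\subseteq S^H$. Also, the degree-$4$ table is in Section~\ref{sect:dp4}, not the appendix.
\item In degree $6$, the condition $\rk\Pic(S)^H=1$ means that $\nu(H)$ is one of the three subgroups $\fS_3\times\fC_2$, $\fC_6$, $\fS_3'$ acting transitively on the hexagon. It does not mean that $H$ surjects onto a $\fC_6$. With this correction the check is short. The excluded groups are conjugates of $G_5,\dots,G_9$, all with $|G_T|\in\{1,3\}$. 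So a subgroup $H\ne G$ with $\rk\Pic(S)^H=1$ meets $G_T$ trivially and maps isomorphically onto $\fC_6$ or $\fS_3'$. By Proposition~\ref{prop:dp6} it is then conjugate to $G_9$ or $G_8$.
\item For $\bP^1\times\bP^1$, the non-rigidity of $\fS_3\subset\fD_6$ is not literally read off from the theorem. A short argument: a factor-swapping involution inverts the element of order $3$. It therefore permutes that element's four fixed points, giving an orbit of length at most $2$.
\end{itemize}
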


Another two byproducts of our analysis are generalizations of the following classical theorem of Segre and Manin \cite{Segre,Manin-I}.

\begin{theo}
Let $G$  be a finite group acting regularly and generically freely on del Pezzo surfaces $S$ and $S'$, with $d(S)=d(S')\leq 3$, and 
$$
\rk\Pic(S)^G=\rk\Pic(S')^G=1.
$$
Then $S$ and $S'$ are $G$-birational if and only if they are $G$-biregular.
\end{theo}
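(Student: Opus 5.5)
This is the classical Segre--Manin statement, and I would deduce it from Theorem~\ref{thm:rigid} via the Sarkisov program. Throughout, $S$ and $S'$ are $G$-Mori fiber spaces over a point, since $\rk\Pic(S)^G=\rk\Pic(S')^G=1$. Because $d(S)\le 3$, the first case of Theorem~\ref{thm:rigid} applies, so $S$ is $G$-birationally rigid.

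Let $\chi\colon S\dashrightarrow S'$ be a $G$-equivariant birational map. By rigidity, $S'$ is the unique $G$-Mori fiber space $G$-birational to $S$, so $S'\simeq S$ as $G$-Mori fiber spaces. I would prefer to use the stronger, standard form of rigidity proved via Noether--Fano inequalities: every $G$-birational map $S\dashrightarrow S'$ to a $G$-Mori fiber space factors as a composition of elementary $G$-birational self-links of $S$ followed by a $G$-biregular isomorphism. Concretely, take the $G$-invariant mobile linear system $\mathcal M=\chi^{-1}_*|-nK_{S'}|$. If $\chi$ is not biregular, the Noether--Fano inequality yields a $G$-orbit of points $\Sigma\subset S$ with $\mathrm{mult}_\Sigma\mathcal M>n$.

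The case analysis then runs by degree.
\begin{itemize}
\item For $d=3$, $|\Sigma|=1$ or $2$, and the orbit must lie in general position. One untwists by the Geiser or Bertini involution of the blow-up of $\Sigma$, which is again a del Pezzo surface of degree $d-|\Sigma|$. This involution commutes with $G$ because it is canonically defined; this is the key point making it $G$-equivariant.
\item For $d=2$, the untwisting map is the Bertini involution of the blow-up of a single point, which is again canonically defined and hence $G$-equivariant.
\item For $d=1$, no maximal singularities exist at all.
\end{itemize}
Each untwisting strictly lowers the degree of $\mathcal M$, so after finitely many steps we reach a $G$-biregular map $S\to S'$.

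The converse direction is trivial.

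The main obstacle is that the statement concerns two abstract actions of the same group $G$ on possibly different surfaces, rather than a fixed subgroup of $\Aut(S)$. So I must check that the untwisting links are equivariant for the given action and never change the isomorphism type of the pair $(S,G)$. This holds because the Geiser and Bertini involutions are canonical and therefore normalize $G$ and are $G$-equivariant. The only genuinely delicate point is verifying that a maximal orbit is in general position for the blow-up to be del Pezzo. I would handle this by the standard intersection computation: for points on a line, or on an infinitely near configuration, the multiplicity bound contradicts $\mathcal M\cdot\mathcal M=n^2d$.
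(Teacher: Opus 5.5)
Your argument is correct, and it is the classical proof. The paper does not prove this statement; it quotes it as the theorem of Segre and Manin. Your maximal-singularity argument is precisely the Manin--Iskovskikh proof behind that citation: Noether--Fano, then the bound $|\Sigma|<d$ from $\mathcal M^2$, then untwisting by the Geiser or Bertini involution of the blow-up of $\Sigma$, which is central in the automorphism group of the blow-up and therefore commutes with $G$.

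Your opening reduction to Theorem~\ref{thm:rigid} is only a restatement. With the paper's definition of $G$-birational rigidity, the claim is literally the case $d(S)\le 3$ of that theorem, and that case rests on the same classical results. So it is your Noether--Fano argument that actually carries the proof.

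Two details should be tightened when you write it out.

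\emph{Normalization.} Normalize on $S$, not on $S'$. Since $K_S^2=d\in\{1,2,3\}$ is squarefree, $-K_S$ is primitive and $\Pic(S)^G=\mathbb{Z}K_S$. Hence $\mathcal M\subset|-\mu K_S|$ for a positive integer $\mu$, which in general differs from your $n$. Noether--Fano then gives $m:=\mathrm{mult}_P(\mathcal M)>\mu$, and the count uses $\mathcal M^2=\mu^2 d$, not $n^2d$. The untwisting replaces $\mu$ by $2\mu-m$ (Geiser, $d=3$), $5\mu-4m$ (Bertini, $d=3$), or $3\mu-2m$ (Bertini, $d=2$). Each is smaller than $\mu$, so the process terminates.

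\emph{General position.} General position does not follow from $\mathcal M^2$. It follows from the inequality $\mathcal M\cdot C\ge\sum_{P\in\Sigma\cap C}\mathrm{mult}_P(\mathcal M)\,\mathrm{mult}_P(C)$, applied to suitable curves $C$, which are not fixed components of the mobile system.
\begin{itemize}
\item For $d=3$, exclude lines through points of $\Sigma$.
\item For $d=3$ and $|\Sigma|=2$, also exclude an irreducible conic through both points, and the hyperplane section $T_{P_1}S\cap S$ (singular at $P_1$) passing through $P_2$.
\item For $d=2$, exclude $(-1)$-curves through $P$, and exclude $P$ lying on the ramification curve of the anticanonical double cover. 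The latter uses the anticanonical curve singular at $P$.
\end{itemize}
Infinitely near points play no role. Non-canonicity of a mobile system on a smooth surface is detected by multiplicities at points of $S$ itself.
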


In \cite{shramov-trepalin}, Shramov and Trepalin proved that the Segre--Manin theorem holds for del Pezzo surfaces of degree $4$, see \cite{Elagin} for a categorical proof. In Section~\ref{sect:dp4}, we give an alternative proof of their result. Note that a similar result holds for the del Pezzo surface of degree $5$. In Section~\ref{sect:dp6}, we prove:

\begin{theo}
\label{thm:dP6}
Let $G$  be a finite group acting regularly and generically freely on del Pezzo surfaces $S$ and $S'$, with $d(S)=d(S')=6$, and 
$$
\rk\Pic(S)^G=\rk\Pic(S')^G=1.
$$
Suppose that $G\not\simeq \fC_3\rtimes \fC_6$. Then $S$ and $S'$ are $G$-birational if and only if they are $G$-biregular. \end{theo}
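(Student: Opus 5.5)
The plan is to first reduce to the non-rigid cases, and then analyse the few $G$-Mori fibre spaces that can be reached from $S$. The sextic del Pezzo surface is unique up to isomorphism, so $S\simeq S'$ as surfaces. The question is whether two embeddings $\phi_1,\phi_2\colon G\hookrightarrow\Aut(S)$ with $\rk\Pic(S)^{\phi_i(G)}=1$ are conjugate in $\Aut(S)=(k^\times)^2\rtimes(\fS_3\times\fC_2)$ once they are $G$-birational.

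\textbf{Rigid case.} If $S$ is $G$-birationally rigid, then Theorem~\ref{thm:rigid} applies whenever $G\not\simeq\fC_3\times\fS_3,\fC_2\times\fS_3,\fS_3,\fC_6$. In that case every $G$-birational map $S\dashrightarrow S'$ between $G$-Mori fibre spaces is conjugate to an isomorphism. More precisely, rigidity says $S$ is the unique $G$-Mori fibre space in its class. Since $S'$ is a $G$-Mori fibre space $G$-birational to $S$, we get $S'\simeq S$ $G$-equivariantly. By the Noether--Fano/Sarkisov argument, the birational map is a composition of $G$-Sarkisov links $S\dashrightarrow S$ followed by an automorphism. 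So this case is immediate.

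\textbf{Linearizable case.} If $G\simeq\fC_6$ or $\fS_3$, the action is linearizable. I would then use the classification of the actions of these groups on $S$ with invariant Picard rank one, from the refined tables in Appendix~\ref{sect:tables}, to check that there is a unique such action up to conjugation in $\Aut(S)$. For $\fS_3$ this should be the action permuting the three pairs of opposite $(-1)$-curves through the $\fS_3\times\fC_2$ factor, twisted by the torus part; torus elements can be conjugated away. So this case reduces to bookkeeping.

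\textbf{Remaining case $G\simeq\fC_2\times\fS_3$.} Here the action is not linearizable but not rigid either. I would list the $G$-Sarkisov links starting from $S$. These come from blowing up $G$-orbits of small length in general position: orbits of length $1$, $2$ or $3$ not lying on the $(-1)$-curves. Such blow-ups yield links to $\bP^1\times\bP^1$, to a quadric with a conic bundle structure, or back to $S$. I would determine the $G$-Mori fibre spaces in the birational class, namely $S$ and possibly some conic bundles. Then I would show that every link $S\dashrightarrow S$ either returns an action conjugate to the original under $\Aut(S)$, or changes the embedding only by an outer automorphism that is already induced by $\Aut^G(S)$. Equivalently, I would check that the only $G$-Mori fibre spaces of degree $6$ in the class are $G$-isomorphic to $S$.

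\textbf{Main obstacle.} The hardest step is the explicit link analysis for $\fC_2\times\fS_3$ (and possibly $\fC_3\times\fS_3$, if it turns out not to be rigid). I would first try a concrete model: $S$ as the closure of a torus in $\bP^1\times\bP^1\times\bP^1$, or the hexagon toric model, with the standard Cremona involution. I would compute the fixed points and short orbits there, check which links exist, and compare the resulting embeddings of $G$ via their actions on the hexagon of lines. The excluded group $\fC_3\rtimes\fC_6$ should be exactly where a link produces a non-conjugate embedding, namely one twisted by an outer automorphism. That serves as a sanity check on the analysis.
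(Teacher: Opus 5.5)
\textbf{The gap is the notion of equivalence.} You reduce the problem to showing that two $G$-birational embeddings $\phi_1,\phi_2\colon G\hookrightarrow\Aut(S)$ are conjugate in $\Aut(S)$. In that sense the statement is false, by the paper's own computations, for two groups the theorem does not exclude.

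\emph{The group $G_4\simeq\fS_3^2$.} The involution $\tau$ of \eqref{eqn:taudp6} normalizes $G_4$ and conjugates $G_5$ onto $G_6$. So it induces the outer automorphism $\alpha$ swapping the two $\fS_3$-factors. No automorphism of $S$ induces $\alpha$: it would conjugate $G_5$ to $G_6$, but $\nu(G_5)\simeq\fS_3$ while $\nu(G_6)\simeq\fC_6$. Hence, for an embedding $\phi$ with image $G_4$, the actions $\phi$ and $\phi\circ\alpha$ are $G$-birational via $\tau$ but not $G$-biregular. This happens even though $S$ is $G_4$-birationally rigid.

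So your step ``rigid case is immediate'' conflates two things. Rigidity in the paper's sense concerns the $G$-Mori fibre space with $G$ viewed as a subgroup, with $\Bir^G(S)$ the normalizer. It only gives that $\phi_1(G)$ and $\phi_2(G)$ are conjugate in $\Aut(S)$, not that the embeddings are.

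\emph{The group $G_7\simeq\fC_2\times\fS_3$.} Your planned link analysis cannot succeed. We have $\Aut^{G_7}(S)=G_7$, so $\bar\beta(\Aut^{G_7}(S))$ is trivial. But the link from the $G_7$-fixed point to $\bP^1\times\bP^1$, followed by the map \eqref{map1} with $n=3$ (Lemma~\ref{lemm:D6}), realizes the nontrivial element of $\Out(G_7)$.

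\emph{The excluded group.} Your sanity check misreads the exception. For $\fC_3\rtimes\fC_6$, $\tau$ does not twist a fixed subgroup by an outer automorphism. It carries $G_5$ onto a different subgroup $G_6$ that is not conjugate to it in $\Aut(S)$. The outer-twist phenomenon you anticipated occurs instead for $G_4$ and $G_7$, which the theorem does not exclude.

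\textbf{How the paper argues.} For the theorem to hold, it must be read at the level of subgroups: a $G$-birational map may identify the two actions up to an automorphism of $G$. With that reading the argument is short and needs no link analysis.
\begin{itemize}
\item If $G$ is not conjugate to one of $G_1,\dots,G_9$, then $|G_T|\ge 5$, no $G$-Sarkisov links start at $S$, and $S$ is $G$-birationally superrigid.
\item For $G_1,\dots,G_4$, the surface $S$ is $G$-birationally rigid.
\item $G_5$ and $G_6$ form the excluded case $\fC_3\rtimes\fC_6$.
\item For $G_7\simeq\fC_2\times\fS_3$, $G_8\simeq\fS_3$ and $G_9\simeq\fC_6$, Proposition~\ref{prop:dp6} shows each is, up to conjugation, the only subgroup of its isomorphism type with $\rk\Pic(S)^G=1$. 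For order $12$, only Case (1) with $n=1$ occurs, since Case (6) with $n=r=2$ would need $s^2-s+1\equiv 0 \pmod 2$. So there is nothing to prove.
\end{itemize}
Your treatment of $\fS_3$ and $\fC_6$ is fine in either reading: $\Out(\fS_3)$ is trivial, and $\sigma_{12}$ inverts $\sigma_{123}\iota$. However, the appendix tables you cite do not cover degree $6$; the relevant classification is Proposition~\ref{prop:dp6}.
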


The exceptional case in Theorem~\ref{thm:dP6} was found in \cite{yasinski}. Note however, that the statement fails in the arithmetic setting \cite{KurzYasinsky}. Moreover, the  Segre--Manin theorem  fails for $\mathbb{P}^1\times\mathbb{P}^1$ and $\mathbb{P}^2$. However, it follows from \cite{sako,CTZ-linear} that the Segre--Manin theorem holds for transitive actions on $\mathbb{P}^2$. In Section~\ref{sect:quad}, we generalize this as follows:

\begin{theo}
\label{thm:dP8}
Let $G$  be a finite group acting regularly and generically freely on del Pezzo surfaces $S$ and $S'$ such that  $S\simeq S'\simeq\mathbb{P}^1\times\mathbb{P}^1$, the group $G$ does not fix points in $S$ and $S^\prime$, and 
$$
\rk\Pic(S)^G=\rk\Pic(S')^G=1.
$$
Then $S$ and $S'$ are $G$-birational if and only if they are $G$-biregular. 
\end{theo}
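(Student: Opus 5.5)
The plan is to use the $G$-equivariant Sarkisov program on $S=\mathbb{P}^1\times\mathbb{P}^1$ with $\rk\Pic(S)^G=1$, and to split according to whether $S$ is $G$-birationally rigid. The "if" direction is trivial, so assume $\chi\colon S\dasharrow S'$ is a $G$-birational map. The condition $\rk\Pic(S)^G=1$ forces $G$ to swap the two rulings. Hence $G\subseteq (\PGL_2\times\PGL_2)\rtimes\fC_2$, and $G$ contains an index-two subgroup $G_0=G\cap(\PGL_2\times\PGL_2)$ preserving both rulings. If $S$ has no $G$-orbits of length $1$ or $2$ and $G\not\simeq\fD_6,\fF_5$, Theorem~\ref{thm:rigid} says $S$ is $G$-birationally rigid. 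Then $S'$, being a $G$-Mori fiber space $G$-birational to $S$, is $G$-biregular to $S$, and we are done. So the work lies in the remaining cases: $G$ has an orbit of length $2$ (and no fixed point, by hypothesis), or $G\simeq\fD_6,\fF_5$.

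For these cases I will classify the $G$-Sarkisov links starting at $S$. A link is initiated by blowing up a $G$-orbit $\Sigma$ of length $r<8$ in general position, since the blow-up must again be a del Pezzo surface. With no fixed points, the candidates are as follows.
\begin{itemize}
\item $r=2$: the two points do not lie on a common ruling. Blowing them up and contracting the two ruling lines through them gives the classical link $S\dasharrow\mathbb{P}^2$, through the degree-$6$ surface, to a $G$-orbit of length $2$... actually the result is the elementary link $\mathbb{P}^1\times\mathbb{P}^1\dasharrow\mathbb{P}^2$ when an orbit of length $1$ is blown down. More precisely, blowing up two points gives a degree-$6$ del Pezzo surface. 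The links are then either a $G$-equivariant map to another $\mathbb{P}^1\times\mathbb{P}^1$ (an elementary transformation swapping the roles of the orbit) or a map to the degree-$6$ surface with its $G$-action.
\item $r=3,4,5,6,7$: the Bertini/Geiser-type and other standard links. These produce birational involutions, or maps to $dP_5$, $dP_6$, $dP_4$, or back to $\mathbb{P}^1\times\mathbb{P}^1$.
\end{itemize}
I will use the classification of Sarkisov links for del Pezzo surfaces of degree $8$ (Iskovskikh's table). For each case I will check whether the target is again $\mathbb{P}^1\times\mathbb{P}^1$ with the \emph{same} $G$-action up to biregular conjugacy, or a surface of a different type, and whether the target has invariant Picard rank $1$.

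The key structural step is as follows. Whenever a link ends at another $\mathbb{P}^1\times\mathbb{P}^1$, I will show the target is $G$-isomorphic to $S$. The natural argument is that the link comes from the linear system of bidegree $(a,a)$ curves with prescribed multiplicities along $\Sigma$. The resulting involution-type map, for example blowing up an orbit of length $2$ and contracting the strict transforms of the two fibers through each point, restores the same group acting on a quadric. One can identify the two actions via the action on the anticanonical image, or via the induced faithful action of $G_0$ on each ruling and the swapping element. Concretely, an orbit $\{p,q\}$ determines an elementary transformation $S\dasharrow S''$. Here $S''$ carries the action with the same $\PGL_2\times\PGL_2$ data, conjugated by the element swapping $p$ and $q$, so it is conjugate in $\Aut(S)$. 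Chains of links passing through $dP_6$, $\mathbb{P}^2$ or other models must then be tracked. Any $G$-Mori fiber space reached that is $\mathbb{P}^1\times\mathbb{P}^1$ must carry an action isomorphic to the original, because each link preserves the isomorphism type of the action on the relevant invariant data. The classification tables in Appendix~\ref{sect:tables} also limit which actions of the relevant groups on $\mathbb{P}^1\times\mathbb{P}^1$ exist. I expect to also need Theorem~\ref{thm:dP6} for chains passing through $dP_6$; the group $\fC_3\rtimes\fC_6$ must be checked not to arise from quadrics with these properties.

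The main obstacle is the non-rigid cases $\fD_6$ and $\fF_5$, together with groups having orbits of length $2$, where links can leave $\mathbb{P}^1\times\mathbb{P}^1$ and come back, for instance $\mathbb{P}^1\times\mathbb{P}^1\dasharrow dP_5\dasharrow\mathbb{P}^1\times\mathbb{P}^1$ for $\fF_5$. For these I would first try a direct enumeration: list all $G$-actions on $\mathbb{P}^1\times\mathbb{P}^1$ with $\rk\Pic^G=1$ and no fixed points for the given abstract group, up to $\Aut(S)$-conjugation. I expect there is usually only one such action, which immediately gives biregularity. Where there are several, I would distinguish them by birational invariants, namely the stabilizer data of points with nontrivial stabilizers, i.e., classes in $\Burn_2(G)$ as in \eqref{eqn:main}, or fixed curves of elements.
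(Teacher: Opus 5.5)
Your reduction of the rigid cases via Theorem~\ref{thm:rigid} is fine. The remaining cases, however, have a genuine gap, and it sits where the real difficulty of the theorem lies.

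First, the link you describe for a $G$-orbit $\Sigma=\{p,q\}$ of length $2$ does not exist. Let $\tilde S$ be the blow-up of $\Sigma$, with $H_i$ the pullbacks of the rulings and $E_j$ the exceptional curves. On $\tilde S$, the second extremal ray of the $G$-invariant cone is spanned by $H_1+H_2-E_1-E_2$, which has square $0$. (The four curves $H_i-E_j$ meet in pairs, so no $G$-invariant subset of them can be contracted.) The link is therefore of type I and ends at a $G$-conic bundle $\pi\colon\tilde S\to\bP^1$. It does not end at another quadric, nor at a $G$-minimal sextic surface.

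To get back to a quadric you must do an elementary transformation of $\pi$ at a $G$-orbit $\tilde\Sigma$ satisfying $(\diamondsuit)$ and $(\heartsuit)$, and then contract to some quadric $S'$. The length of $\tilde\Sigma$ is not bounded a priori; for instance it is $n$ or $2n$ in Lemmas~\ref{lemm:action-3} and \ref{lemm:action-4}. That $S'$ is $G$-biregular to $S$ is exactly Proposition~\ref{prop:quadrics}, and the paper stresses this is not automatic (it fails in the arithmetic setting). Your sentence that each link "preserves the isomorphism type of the action on the relevant invariant data" is the claim to be proved, not an argument for it.

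The paper proves it in three steps:
\begin{itemize}
\item it uses the classification of Proposition~\ref{prop:P1P1gp};
\item it applies the criterion of Remark~\ref{remark:link-exists}, namely that $G_\phi$ must be cyclic, which rules out the conic-bundle route in most cases;
\item in the surviving cases it writes explicit formulas for $\psi$ showing that $\psi\in\Bir^G(S)$.
\end{itemize}
The same issue arises for $|\Sigma|=4$: the target is again a quadric, and you need Lemma~\ref{lemma:P1-P1-4-points} to see that it is $G$-biregular to $S$.

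Second, your fallback (enumerate actions and separate distinct ones by classes in $\Burn_2(G)$) cannot work as stated. Take $G\simeq\fC_2\times\fD_n$ with $n$ odd, for example $\fD_6$. By Lemmas~\ref{lemm:D6} and \ref{lemm:action-3} and Theorem~\ref{thm:out}, the map \eqref{map1} induces an outer automorphism of $G$ lying outside $\bar\beta(\Aut^G(S))$. Twisting the action by it gives an action on the same quadric, again without fixed points and with $\rk\Pic^G=1$. This twisted action is $G$-birational but not $G$-biregular to the original, so no birational invariant can separate the two.

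Consequently the theorem must be read for the subgroup $G\subset\Aut(S)$ up to conjugation, i.e., for actions up to $\Aut(G)$. You need to fix that interpretation before any enumeration. Even then, showing that non-conjugate subgroups are not $G$-birational requires either the link-by-link control of Proposition~\ref{prop:quadrics} or explicit invariants, and the proposal supplies neither.
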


For linearizable actions on $\mathbb{P}^1\times\mathbb{P}^1$, the assertion of Theorem~\ref{thm:dP8} fails in general. For instance, if $G\simeq\mathfrak{D}_4$ acts on $S=\mathbb{P}^1\times\mathbb{P}^1$ as 
\begin{align*}
([x_1:x_2],[y_1:y_2])&\mapsto ([y_1:y_2],[x_1:x_2]),\\    
([x_1:x_2],[y_1:y_2])&\mapsto ([-x_1:x_2],[y_1:y_2]),    
\end{align*}
and $G$ acts on $S^\prime=\mathbb{P}^1\times\mathbb{P}^1$ as 
\begin{align*}
([x_1:x_2],[y_1:y_2])&\mapsto ([y_1:y_2],[x_1:x_2]),\\    
([x_1:x_2],[y_1:y_2])&\mapsto ([ix_1:x_2],[-iy_1:y_2]),    
\end{align*}
then both $G$-actions are linearizable, so it follows from \cite{CTZ-linear} that $S$ and $S^\prime$ are $G$-birational, but $S$ and $S^\prime$ are not $G$-biregular, because they have different fixed loci stratification. In the arithmetic case, the situation is very similar \cite{Trepalin,CT-unpublished}.

\ 

The following result has the same flavor as Theorems~\ref{thm:dP6} and \ref{thm:dP8}: it establishes that, apart from the exceptions listed, birationality of actions coincides with isomorphism of actions.

\begin{theo}
    \label{thm:out}
Let $S$ be a del Pezzo surface of degree $d(S)\leq 8$, and $G\subseteq \Aut(S)$ a finite subgroup such that $\rk\Pic(S)^G=1$, and the $G$-action on $S$ is not linearizable. Then
\begin{equation*} \label{eqn:dre}
\frac{|\bar{\beta}(\Bir^G(S))|}{|\bar{\beta}(\Aut^G(S))|} \le 2.
\end{equation*}
Moreover, equality holds if and only if
\begin{itemize}
    \item $d(S)=4$, and $G\simeq \fC_8$,
    \item $d(S)=6$, and $G\simeq\fS_3^2$,
    \item $S\simeq \bP^1\times \bP^1$, $G\simeq \fC_2\times\fD_n$ with $n$ odd, and $G$ is conjugated to the subgroup generated by 
    $$
     (x,y)\mapsto (\zeta_nx,\zeta_ny),\quad (x,y)\mapsto (\frac1x,\frac1y),\quad (x,y)\mapsto (y,x).
    $$
\end{itemize}
\end{theo}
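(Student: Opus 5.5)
The plan is a case analysis on $d(S)\in\{1,\dots,8\}$, using the $G$-equivariant Sarkisov program. Since $\rk\Pic(S)^G=1$ and the action is not linearizable, every element of $\Bir^G(S)$ factors into elementary $G$-Sarkisov links. Each link starts at a $G$-Mori fibre space $G$-birational to $S$ and has its centre at a $G$-orbit of small length.

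\textbf{Degrees $1,2,3,5$.} The equality $\bar\beta(\Bir^G(S))=\bar\beta(\Aut^G(S))$ is already recorded in the list in the introduction. For $d(S)=1$ this holds because $\Bir^G(S)=\Aut^G(S)$. For $d(S)\in\{2,3\}$ it is stated directly. For $d(S)=5$, non-linearizability forces $G\simeq\mathfrak F_5,\fA_5,\fS_5$. For $\fA_5$ and $\fS_5$ we have $\Bir^G=\Aut^G$, and for $\mathfrak F_5$ the group $\Out(G)$ is trivial. So the ratio is $1$ in all these degrees.

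\textbf{Degrees $4,6,8$.} Here I would use the classification from Sections~\ref{sect:quad}, \ref{sect:dp6} and \ref{sect:dp4}, organised as follows.

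1. If $S$ is $G$-birationally rigid (Theorem~\ref{thm:rigid}), then every $G$-birational self-map is the composition of an automorphism with a birational self-link $S\dashrightarrow S$. I would show that these self-links induce automorphisms of $G$ that are inner modulo $\Aut^G(S)$, or that they are themselves regularisable. The typical self-links are Bertini/Geiser-type involutions centred at orbits of length $1$ or $2$ in degree $4$, and elementary transformations through orbits in degrees $6$ and $8$. Each such link commutes with $G$ up to the automorphism of $G$ induced by conjugation, and one reads off its class in $\Out(G)$ from the action on the fixed-point or stabilizer stratification.

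2. If $S$ is not rigid, I would take the finite list of $G$-Mori fibre spaces $S_1,\dots,S_r$ that are $G$-birational to $S$; these are obtained from the links found in Theorems~\ref{thm:dP6} and \ref{thm:dP8} and the degree-$4$ analysis. The image $\bar\beta(\Bir^G(S))$ is then generated by $\bar\beta(\Aut^G(S))$ together with the classes of link-chains $S\dashrightarrow S_i\dashrightarrow S$. Theorems~\ref{thm:dP6} and \ref{thm:dP8} and the Shramov–Trepalin result say that $G$-birational models of the same type are $G$-biregular as surfaces. The index is therefore controlled by how many distinct $G$-actions (twists by $\Out(G)$) on a fixed $S$ are identified through a link to a model of different degree, such as a conic bundle or a del Pezzo surface of degree $5$ or $8$.

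The bound $\le2$ should then follow because in each non-rigid case at most one nontrivial class is realised. One obtains a commutative square of links $S\dashrightarrow S'\dashrightarrow S$ whose composite, modulo automorphisms, is an involution in $\Out(G)/\bar\beta(\Aut^G(S))$.

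3. To pin down equality, I would verify the three listed cases explicitly by exhibiting a birational self-map realising a non-regularisable outer automorphism. For $\fC_8$ in degree $4$ and $\fS_3^2$ in degree $6$, this self-map comes from linking to $\bP^1\times\bP^1$ or a conic bundle and back with the factors exchanged. For $\fC_2\times\fD_n$ on $\bP^1\times\bP^1$ with $n$ odd, I would use the elementary link through the orbit of length $2$ of $\fD_n$ to a model where $\zeta_n\mapsto\zeta_n^{-1}$-type twists swap. For all other groups in the tables of Appendix~\ref{sect:tables}, I would check that every link-induced outer automorphism is already realised in $\Aut^G(S)$, by comparing fixed-locus stratifications, which are $G$-birational invariants in the Burnside group.

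The main obstacle is step 2 for $\bP^1\times\bP^1$. There the groups come in infinite families, and one must control all links, including those to conic bundles, uniformly in $n$. I would handle this by reducing to the finitely many orbit lengths ($\le 7$) that can serve as link centres, and computing the induced map on $\Out(G)$ family by family.
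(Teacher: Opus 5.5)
Your outline has the right global shape: the known results in degrees $1,2,3,5$, then a link-by-link analysis in degrees $4,6,8$. However, the ingredient that actually controls $\bar\beta$ is missing, and step~1 is false as stated. In the paper, Bertini and Geiser involutions, and the Yasinsky involutions of Lemma~\ref{lemma:P1-P1-4-points}, come from a biregular involution of the blown-up surface that commutes with the lift of $G$. Hence they lie in $\ker\bar\beta$ (\cite[Remark~2.3]{CTZ-linear}). Only the remaining links can enlarge the image, and those are computed by hand.

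You instead assert that when $S$ is $G$-birationally rigid, every self-link is trivial modulo $\bar\beta(\Aut^G(S))$. This fails for one of the equality cases. The group $G_4\simeq\fS_3^2$ on the sextic del Pezzo surface is $G$-birationally rigid. Yet the self-link $\tau$ of \eqref{eqn:taudp6} (blow up the length-$3$ orbit to the Fermat cubic surface and apply the involution $\phi$) normalizes $G_4$ and induces the factor swap generating $\Out(\fS_3^2)\simeq\fC_2$, while $\bar\beta(\Aut^{G_4}(S))$ is trivial. Correspondingly, your step~3 looks for the $\fS_3^2$ witness in a link to $\bP^1\times\bP^1$ or a conic bundle, which cannot exist for a rigid action. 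Also, in degree $4$ the Geiser and Bertini links are centred at orbits of length $2$ and $3$; a fixed point gives a link to a conic bundle, not a self-link.

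The bound $\le 2$ and the ``only if'' direction are asserted rather than derived: ``at most one nontrivial class is realised'' is the theorem itself. Knowing from Theorems~\ref{thm:dP6} and~\ref{thm:dP8} that a chain of links returns to a $G$-biregular model only lets you close it up into an element of $\Bir^G(S)$; it says nothing about its class in $\Out(G)$. Comparing fixed-locus data or Burnside classes can only \emph{exclude} an outer class from $\bar\beta(\Bir^G(S))$. It can never show that a link-induced class lies in $\bar\beta(\Aut^G(S))$, and fixed-locus stratification is not even a birational invariant (cf.\ the $\fD_4$ example in the introduction).

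The paper gets containment by listing generators. For $\bP^1\times\bP^1$:
\begin{itemize}
\item Corollary~\ref{corollary:P1-P1-A4}, Lemma~\ref{lemma:P1-P1-rigid} and a few explicit small cases reduce the infinite families to $\nu(G)\simeq\fC_2^2$ (Proposition~\ref{prop:P1P1gp}).
\item The only links not already in $\ker\bar\beta$ then start at the two length-$2$ orbits.
\item The chain \eqref{quadric} exists iff $G_\phi$ is cyclic (Remark~\ref{remark:link-exists}), and it closes up by Proposition~\ref{prop:quadrics}.
\item The explicit maps \eqref{map1}--\eqref{map5d} show that only \eqref{map1} leaves $\bar\beta(\Aut^G(S))$. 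This happens in Case~(3) with $n=r$ odd, with Lemma~\ref{lemm:D6} covering $n=3$.
\item The new class is $b\mapsto ab$ with $a$ central. It is not a $\zeta_n\mapsto\zeta_n^{-1}$ twist, which is inner in $\fD_n$.
\end{itemize}
In degree $4$ the analogue is the Iskovskikh self-map through a $G$-fixed point, with the same cyclicity test (Corollary~\ref{coro:dP4-C2-C2}).

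Finally, for $\fC_8$ your plan to exhibit a self-map realizing a new outer class cannot succeed. Lemma~\ref{lemm:dP4-C8} uses the incompressible symbol of the elliptic curve $\{x_4=0\}$ with its residual $\fC_4$-action to show that $g\mapsto g^3$ is obstructed. So $\bar\beta(\Bir^G(S))=\bar\beta(\Aut^G(S))\simeq\fC_2$ there, and that bullet of the statement is at odds with the paper's own computation. Section~\ref{sect:dp6} also records index $2$ for $G_7\simeq\fC_2\times\fS_3$, via the fixed-point link to $\bP^1\times\bP^1$ and Lemma~\ref{lemm:D6}. Your ``check all other groups'' step would have to confront this case as well.
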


\ 

\noindent
{\bf Acknowledgments:} 
The first author was supported by Simons Collaboration grant {\em Moduli of varieties}.
The second author was partially supported by NSF grant 2301983.

\section{Quadric}
\label{sect:quad}

Let  $S=\bP^1\times \bP^1$ and 
$$
G\subset \Aut(S)=(\PGL_2)^2\rtimes \fC_2 
$$
be a finite subgroup such that $\rk\Pic(S)^G=1$. Then  $G$ projects nontrivially to $\fC_2$, so that
$$
G\simeq (H\times_Q H)\rtimes \fC_2, \quad H\subset \PGL_2,
$$
where $Q$ is a group and 
$$
H\times_Q H=\{(h,h')\in H\times H: \gamma(h)=\gamma'(h')\},
$$
for surjective homomorphisms $\gamma,\gamma':H\to Q$.
Linearizability of actions on quadric surfaces has been settled in \cite{PSY}. 
We have:  
\begin{itemize} 
\item $H$ is cyclic $\Rightarrow$ the $G$-action is linearizable.
\item $H$ is dihedral $\Rightarrow$ the $G$-action is not linearizable.
\item $H=\fA_4,\fS_4$ or $\fA_5$ $\Rightarrow$ the $G$-action is birationally rigid. 
 \end{itemize}
We focus on nonlinearizable actions, in particular, we assume that $S^G=\varnothing$ and $H$ is
not cyclic. We aim to determine the image of 
$$
\bar{\beta}: \Bir^G(S) \to \mathrm{Out}(G).
$$ 
We summarize the main steps:
\begin{itemize}
    \item Reduction to the case when $H$ is dihedral.
    \item Extraction of rigid and solid $G$-actions.
    \item Classification of non-solid actions. 
\end{itemize}

\subsection*{Reduction to the dihedral case}

If $S$ does not contain $G$-orbits of length less than $6$, then the classification of $G$-Sarkisov links implies that $S$ is either
\begin{itemize}
    \item  $G$-birationally superrigid, or 
     \item $G$-birationally rigid, with $\Bir^G(S)$  generated by $\Aut^G(S)$, Geiser involutions, and Bertini involutions.
\end{itemize}
If $G$ has a $G$-orbit of length $4$, whose points are in general position, then $\Bir^G(S)$ also contains a $G$-birational involution as in:

\begin{lemm}[{\cite[Proposition 5.1]{yasinski}}]
\label{lemma:P1-P1-4-points}
If there is a $G$-Sarkisov link
$$
\xymatrix{
&\tilde{S}\ar[dl]_{\pi}\ar[dr]^{\pi'}&\\
S&&S}
$$
where $\pi$ and $\pi'$ are blowups of $G$-orbits of length $4$ and $\tilde{S}$ a smooth del Pezzo surface of degree $4$, then 
there exists a birational involution $\tau\in\mathrm{Bir}^G(\tilde{S})$ and the following $G$-commutative diagram: 
$$
\xymatrix{
\tilde{S}\ar[d]_{\pi}\ar[rr]^{\phi}&&\tilde{S}\ar[d]^{\pi}\\
S\ar@{-->}[rr]^{\tau}&&S
}
$$
where $\phi$ is a biregular involution of $\tilde{S}$. Moreover, $\tau$ commutes with $G$.
\end{lemm}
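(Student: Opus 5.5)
Since $\tilde S$ arises from blowing up a $G$-orbit of length $4$ in general position on $S=\bP^1\times\bP^1$, the plan is to exhibit the involution $\phi$ explicitly in $\Aut(\tilde S)$, using the standard description of quartic del Pezzo surfaces. Write $\tilde S\subset\bP^4$ as a smooth intersection of two diagonal quadrics
$$
\textstyle\sum x_i^2=\sum \lambda_i x_i^2=0 .
$$
Then $\Aut(\tilde S)\supseteq \fC_2^4$, the group generated by the sign changes $x_i\mapsto -x_i$. This $\fC_2^4$ is normal in $\Aut(\tilde S)$ and acts on $\Pic(\tilde S)$ as the index-two subgroup of the center-free part of $W(D_5)$, namely the translations in $W(D_5)=\fC_2^4\rtimes\fS_5$.

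First, I will identify the two contractions $\pi,\pi'$ in terms of $\Pic(\tilde S)$. The map $\pi$ contracts four disjoint $(-1)$-curves $E_1,\dots,E_4$. In the link, $\pi'$ contracts another $G$-invariant quadruple $E_1',\dots,E_4'$ with $\pi'^*$ of the two rulings giving the other conic bundle classes. Concretely, on $\tilde S$ the sixteen lines are $E_i$, the strict transforms of the two rulings through each $p_i$ (eight curves), and the four curves in the classes $h_1+h_2-\sum_{j\ne i}E_j$. The natural candidate for $E_i'$ is the last family, $E'_i=h_1+h_2-\sum_{j\neq i}E_j$. These are pairwise disjoint, and $E_i\cdot E'_j=1-\delta_{ij}$.

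Next, I will find an element $\phi\in\fC_2^4$ with $\phi(E_i)=E'_i$ and $\phi(h_k)=h_k$ modulo the relevant classes. In $\fC_2^4$, an involution acting on $\Pic$ as a reflection-type element of $W(D_5)$ swaps $E_i\leftrightarrow E_i'$ for all $i$ and fixes $\pi^*$ of the two ruling classes $h_1-\ldots$, appropriately adjusted. Since every element of $W(D_5)$ in the translation subgroup is realised by some sign change, I will compute the lattice isometry
$$
E_i\mapsto E_i',\qquad h_1\mapsto 2h_1+h_2-\textstyle\sum E_j\ (\text{or the symmetric choice}),
$$
check that it preserves $K$ and the intersection form, and check that it lies in $\fC_2^4$ (equivalently, that it fixes each of the five conic bundle pairs up to swapping). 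The biregular involution $\phi$ then maps the $\pi$-exceptional curves to the $\pi'$-exceptional curves, so $\tau:=\pi\circ\phi\circ\pi^{-1}$ is a birational involution of $S$, and the diagram commutes by construction. To see that $\tau\in\Bir^G(S)$, the key step is that $\fC_2^4$ is normal in $\Aut(\tilde S)$ and that $\phi$ is determined by its action on $\Pic$.

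Finally, and this is the main obstacle, I must show that $\tau$ commutes with $G$ rather than merely normalizes it. I will argue that the action of $\Aut(\tilde S)$ on $\Pic(\tilde S)$ is faithful, so it suffices to check that $\phi$'s class commutes with the image of $G$ in $W(D_5)$. The image of $G$ preserves $\{E_i\}$ and $\{E'_i\}$ with the same permutation, because $E_i'$ is defined from $E_i$ by a $G$-equivariant rule. Since $\phi$ acts on $\Pic$ by $E_i\mapsto E_i'$ compatibly with that rule, the commutator $[g,\phi]$ acts trivially on $\Pic$, and hence $[g,\phi]=1$. The delicate point is verifying that the conic-class assignment is canonical, in particular when $g$ swaps the two rulings. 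If that fails, I would instead note that $\phi$ is central in $\fC_2^4\rtimes(\text{stabilizer})$ by computing directly that its class is the unique nontrivial element of $\fC_2^4$ fixed by the stabilizer of the decomposition $\{E_i\}\sqcup\{E'_i\}$.
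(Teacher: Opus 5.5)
Your proposal is correct and is essentially the paper's argument. The paper also uses the embedding $\Aut(\tilde S)\hookrightarrow W(\mathsf D_5)$ and the normal subgroup $\fC_2^4\subset\Aut(\tilde S)$, and checks by \texttt{Magma} that $\fC_2^4$ contains an involution commuting with the stabilizer $\Gamma\simeq\fS_4\times\fC_2$ of $E_1+E_2+E_3+E_4$ (this is your fallback); you instead compute the class of $\phi$ in $\Pic(\tilde S)$ by hand.

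One correction: of your two candidates, only the symmetric one lies in $\fC_2^4$. That is $h_k\mapsto -K_{\tilde S}-h_k$, so $h_1\mapsto h_1+2h_2-\sum E_j$; it swaps inside each of the two ruling pairs and fixes every class $h_1+h_2-E_i-E_j$. Your first formula $h_1\mapsto 2h_1+h_2-\sum E_j$ instead interchanges the conic pairs $\{h_1,-K-h_1\}$ and $\{h_2,-K-h_2\}$, so it fails your membership test. With the correct choice, commutation with $G$ on $\Pic(\tilde S)$ is immediate, since $G$ fixes $K_{\tilde S}$, permutes $\{h_1,h_2\}$, and moves the $E_i$ and $E_i'$ by the same permutation.
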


\begin{proof}
We give a slightly modified proof of \cite[Proposition 5.1]{yasinski}. By  \cite[Section 6.1]{DI}, there is a natural embedding $\mathrm{Aut}(\tilde{S})\hookrightarrow W_S$, and the Weyl group $W_S\simeq \fC_2^4\rtimes\mathfrak{S}_5$ acts transitively on the set of $(-1)$-curves in $\tilde{S}$. Note also that $W_S$ contains a unique normal subgroup $H\simeq \fC_2^4$, and $H\subset\mathrm{Aut}(\tilde{S})$. Let $E_1,E_2,E_3,E_4$ be the $\pi$-exceptional curves, and $\Gamma$ the stabilizer in $W_S$ of the curve $E_1+E_2+E_3+E_4$. Then $\Gamma\simeq\mathfrak{S}_4\times \fC_2$, and $G\subset \Gamma$, by construction. Using {\tt Magma}, one checks that $H$ contains an involution $\phi$ that commutes with $\Gamma$, as claimed.
\end{proof}

\begin{defi}
An involution $\tau\in\Bir^G(S)$ as in Lemma~\ref{lemma:P1-P1-4-points} will be called a {\em Yasinsky involution} of $S$. 
\end{defi}

Using the classification of $G$-Sarkisov links again, we observe: if $S$ does not have $G$-orbits of length $2$, $3$ or $5$, then either $S$ is $G$-birationally superrigid, and therefore 
$$
\Bir^G(S)=\Aut^G(S),
$$
or $S$ is $G$-birationally rigid and $\Bir^G(S)$ is generated by $\Aut^G(S)$, Geiser involutions, Bertini involutions, and Yasinsky involutions, which also follows from \cite[Corollary 5.2]{yasinski}. Moreover, by \cite[Remark~2.3]{CTZ-linear} and  Lemma~\ref{lemma:P1-P1-4-points}, all these birational involutions lie in the kernel of the homomorphism $\bar{\beta}\colon \Bir^G(S)\to\mathrm{Out}(G)$. This yields: 

\begin{coro}[{cf. \cite[Proposition 6.12]{PSY}}]
\label{corollary:P1-P1-A4}
If  $H=\fA_4,\fS_4$ or $\fA_5$, then $S$ is $G$-birationally rigid, and
$$
\bar{\beta}(\Bir^G(S))=\bar{\beta}(\Aut^G(S)).
$$
\end{coro}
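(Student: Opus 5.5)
The plan is to reduce the corollary to the observation stated just before it: if $S$ has no $G$-orbits of length $1$, $2$, $3$ or $5$, then either $S$ is $G$-birationally superrigid, or it is $G$-birationally rigid with $\Bir^G(S)$ generated by $\Aut^G(S)$ together with Geiser, Bertini and Yasinsky involutions. All of these involutions lie in $\ker\bar{\beta}$, by \cite[Remark~2.3]{CTZ-linear} and Lemma~\ref{lemma:P1-P1-4-points}. Granting this, both rigidity and $\bar{\beta}(\Bir^G(S))=\bar{\beta}(\Aut^G(S))$ follow at once, since $\Bir^G(S)$ is generated by $\Aut^G(S)$ and elements of the kernel. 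So the work is to show that, for $G\simeq (H\times_Q H)\rtimes\fC_2$ with $H\in\{\fA_4,\fS_4,\fA_5\}$, every $G$-orbit on $S=\bP^1\times\bP^1$ has length $4$ or at least $6$.

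For the orbit bound, let $\Sigma\subset S$ be a $G$-orbit and $p=(a,b)\in\Sigma$. Let $G^0=H\times_Q H$ be the index-two subgroup preserving the rulings. The first projection maps $G^0$ onto $H$, because $\gamma'$ is surjective and so every $h$ has a partner $h'$ with $\gamma(h)=\gamma'(h')$. Hence the $G^0$-orbit of $p$ maps onto the $H$-orbit of $a\in\bP^1$, and $|\Sigma|\ge |G^0\cdot p|\ge |H\cdot a|$. The orbits of $\fA_4,\fS_4,\fA_5$ on $\bP^1$ have lengths at least $4$, $6$, $12$ respectively, so $|\Sigma|\ge 4$, which excludes lengths $1,2,3$ immediately.

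The remaining task is to rule out length $5$. If $|\Sigma|=5$ then $|G^0\cdot p|=5$, since it has length $|\Sigma|$ or $|\Sigma|/2$ and at least $4$. But $|G^0\cdot p|$ is divisible by $|H\cdot a|$, because $G^0\cdot p$ fibres over $H\cdot a$ with fibres of equal size. Every $H$-orbit length on $\bP^1$ lies in $\{4,6,12\}$, $\{6,8,12,24\}$ or $\{12,20,30,60\}$, and none of these divides $5$. So length $5$ is impossible, and the same divisibility argument shows that orbits of length less than $6$ have length exactly $4$, occurring only for $H=\fA_4$. Any such length-$4$ orbit that is not in general position gives no Sarkisov link, so it is harmless. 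Length-$4$ orbits in general position contribute Yasinsky involutions, which is exactly what Lemma~\ref{lemma:P1-P1-4-points} handles.

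I expect the main obstacle to be justifying the cited classification of $G$-Sarkisov links from $S$, namely that a link starts only from blowing up orbits of length less than $8$ in general position. One has to check that each such link either returns to $S$ through a $G$-biregular map composed with a Geiser, Bertini or Yasinsky involution, or is excluded by the orbit-length bound above. Since this classification is used just before the corollary, I would cite it, as in \cite[Proposition 6.12]{PSY} and \cite[Corollary 5.2]{yasinski}, and rely only on the orbit-length computation as the new input.
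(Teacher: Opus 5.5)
Your proposal is correct and follows the paper's route. The paper deduces the corollary directly from the observation preceding it: if there are no $G$-orbits of length $1,2,3,5$, then $S$ is $G$-birationally rigid and $\Bir^G(S)$ is generated by $\Aut^G(S)$ together with Geiser, Bertini and Yasinsky involutions, all of which lie in $\ker\bar{\beta}$. Your projection-and-divisibility argument, via $H\cdot a$ on $\bP^1$, supplies the orbit-length check that the paper leaves implicit.
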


\subsection*{Geometry in the dihedral case}
From now on, we assume that $H$ is dihedral. Then each factor of $S=\mathbb{P}^1\times\mathbb{P}^1$ has an $H$-orbit of length $2$, which is unique if $H\not\simeq \fC_2^2$. Without loss of generality, we may assume that these orbits consist of the points $P_1=[0:1]$ and $P_2=[1:0]$. Put 
\begin{center}
$L_{11}:=\mathrm{pr}_1^*(P_1)$, \quad $L_{12}:=\mathrm{pr}_1^*(P_2)$, \quad $L_{21}:=\mathrm{pr}_2^*(P_1)$, \quad $L_{22}:=\mathrm{pr}_2^*(P_2)$,  
\end{center}
where $\mathrm{pr}_j$ are projections to the factors. 
Then $G$ preserves the torus 
$$
S^\circ:=S\setminus (L_{11}\cup L_{12}\cup L_{21}\cup L_{22})\simeq\mathbb{G}_m^2,
$$
and $G$ is contained in its normalizer in $\mathrm{Aut}(S)$;  we have an exact sequence
\begin{align}\label{eqn:nu}
    1\to G_T\to G\stackrel{\nu}{\lra} \fD_4, 
\end{align}
where $G_T$ is a subgroup of the group of translations in $\bG_m^2$  and $\fD_4$ acts transitively on the set of curves $L_{11},L_{12},L_{21},L_{22}$. 

Since we assume that $\rk\Pic(S)^G=1$ and $G$ does not fix points in $S$, the group $G$ acts transitively on $L_{11},L_{12},L_{21},L_{22}$, and  $\nu(G)$ is: 
$$
\fC_2^2, \quad \fC_4,  \quad  \text{ or } \quad \fD_4.
$$

\subsection*{Rigid groups}

Using the classification of $G$-Sarkisov links \cite{Isk} or, more explicitly, \cite[Corollary 5.2]{yasinski}, we obtain the following:

\begin{lemm}
\label{lemma:P1-P1-rigid}
If $\nu(G)\simeq \fC_4$ or $\fD_4$ as in \eqref{eqn:nu}, and $|G_T|\not\in\{1,2,3,5\}$, then $S$ is $G$-birationally rigid,  and 
$$
\bar{\beta}(\Bir^G(S))=\bar{\beta}(\Aut^G(S)).
$$
\end{lemm}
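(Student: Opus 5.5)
The plan is to reduce to the observation already made before Corollary~\ref{corollary:P1-P1-A4}. If $S$ has no $G$-orbits of length $2$, $3$ or $5$, then either $S$ is $G$-birationally superrigid, or it is $G$-birationally rigid with $\Bir^G(S)$ generated by $\Aut^G(S)$ together with Geiser, Bertini and Yasinsky involutions. All of these involutions lie in $\ker\bar\beta$, by \cite[Remark~2.3]{CTZ-linear} and Lemma~\ref{lemma:P1-P1-4-points}. Hence $\bar{\beta}(\Bir^G(S))=\bar{\beta}(\Aut^G(S))$. So the whole task is to show that, under the hypotheses, $S$ has no $G$-orbits of length $1$, $2$, $3$ or $5$. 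Length $1$ is already excluded, since $S^G=\varnothing$.

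First I would show that every point on the boundary $L_{11}\cup L_{12}\cup L_{21}\cup L_{22}$ has a large orbit.
\begin{itemize}
\item Since $G$ acts transitively on the four lines, the four torus-fixed corners $L_{1i}\cap L_{2j}$ are permuted by $\nu(G)$. When $\nu(G)\simeq\fC_4$ or $\fD_4$, this action is transitive (the $4$-cycle rotates the square), so the corners form an orbit of length $4$.
\item A non-corner point of $L_{11}$ has stabilizer in $G_T$ equal to the kernel of the projection of $G_T$ to the translations along $L_{11}$.
\item I would argue that $G_T$ acts faithfully, or at least with orbits of size at least $|G_T|/(\text{something small})$, on each line. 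Because $\nu(G)\supseteq\fC_4$ swaps the two factors and conjugates $G_T$ into itself, the projections of $G_T$ to the two $\bG_m$ factors have equal order.
\end{itemize}
Orbit lengths on the boundary are therefore multiples of $4$, hence never $2$, $3$ or $5$.

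Next I would treat points of the torus $S^\circ$. An orbit $O\subset S^\circ$ is a union of $G_T$-orbits, and $G_T$ acts freely on $\bG_m^2$ by translations. So $|G_T|$ divides $|O|$, and $|O|$ is a multiple of $|G_T|$.
\begin{itemize}
\item If $|G_T|\ge 6$, then $|O|\notin\{2,3,5\}$ immediately.
\item If $|G_T|=4$, then $|O|$ is a multiple of $4$, which also excludes $2$, $3$ and $5$.
\end{itemize}
This is exactly where the hypothesis $|G_T|\notin\{1,2,3,5\}$ enters.

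I expect the main obstacle to be the boundary analysis: making sure a point on one of the lines cannot have an orbit of length $2$. This requires $\nu(G)$ to contain a $4$-cycle, which is why the case $\nu(G)\simeq\fC_2^2$ is excluded. I would check it by noting that such a point's orbit meets all four lines, contributing at least one point on each line, so its length is a multiple of $4$.

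Finally, I would invoke \cite[Corollary 5.2]{yasinski} (equivalently Iskovskikh's classification of links) to conclude rigidity and the stated description of generators, and then apply the kernel statement to get the equality of images.
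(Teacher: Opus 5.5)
Your proposal is correct and is essentially the paper's proof. You exclude $G$-orbits of length $2,3,5$ on the boundary, where the four corners form a single orbit because $\nu(G)\supseteq\fC_4$ and non-corner orbits are spread equally over the four lines, and in the torus, where $|G_T|$ divides every orbit length; then you apply the link classification and the fact that Geiser, Bertini and Yasinsky involutions lie in $\ker\bar\beta$ (your side remark that $G_T$ acts faithfully on each line is false in general, e.g.\ an element $(\alpha,1)\in G_T$ fixes $L_{11}$ pointwise, and the $4$-cycle is only needed for the corners, but neither affects the argument).
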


\begin{proof}
We claim that $S$ does not contain $G$-orbits of length $2$, $3$ or $5$. Indeed, the boundary $L_{11}\cup L_{12}\cup L_{21}\cup L_{22}$  does not contain a $G$-orbits of length $2$, $3$ or $5$, because $\nu(G)$ permutes $L_{11},L_{12},L_{21},L_{22}$ transitively. On the other hand, $G_T$ acts on $S^\circ\simeq\mathbb{G}_m^2$ by translations, so it does not contain $G$-orbits of length $2$, $3$ or $5$, since  $|G_T|\not\in\{1,2,3,5\}$. 

Since $S$ does not contain $G$-orbits of length $2$, $3$ or $5$, as already mentioned, $S$ is $G$-birationally rigid,  and $\Bir^G(S)$ is generated by $\Aut^G(S)$, Geiser involutions, Bertini involutions, and Yasinsky involutions, so 
$$
\bar{\beta}(\Bir^G(S))=\bar{\beta}(\Aut^G(S)),
$$
by \cite[Remark~2.3]{CTZ-linear} and  Lemma~\ref{lemma:P1-P1-4-points}. 
\end{proof}

Recall that $G$ is not cyclic -- otherwise it would fix a point in $S$.
Thus, if $\nu(G)=\fC_4$ and $|G_T|\in\{1,2,3,5\}$, then, up to conjugation, one of the following holds:
\begin{enumerate}
\item $G\simeq  \mathfrak F_5\simeq \fC_5\rtimes \fC_4$ and $G$ is generated by 
$$
(x,y)\mapsto(\zeta_5x,\zeta_5^2y)\quad\text{and}\quad  (x,y)\mapsto(\frac1y,x).
$$
\item $G\simeq \fC_2\times \fC_4$ is generated by 
$$
(x,y)\mapsto(-x,-y)\quad\text{and}\quad (x,y)\mapsto (\frac1y,x).
$$

\end{enumerate}
In the first case, $S$ is $G$-solid \cite{Wolter}, and the $G$-action on $S$ is unique, since $\mathrm{Out}(\mathfrak{F}_5)$ is trivial. In the second case,  $\mathrm{Out}(\fC_2\times \fC_4)\simeq \fC_2^2$, and 
$$
\mathrm{Aut}^G(S)\simeq (\fC_2\times \fC_4)\rtimes\mathfrak{D}_4,
$$ 
with GapID {\tt (64,138)}, generated by 
\begin{align*}
(x,y)& \mapsto (-x,y),\\
 (x,y)& \mapsto (ix,iy),\\
(x,y)& \mapsto (\frac{1}{x},y),\\
 (x,y)& \mapsto (y,x),
\end{align*}
which implies that the $G$-action on $S$ is also unique. In this case, $G$ is conjugated in $\mathrm{Aut}(S)$ to the subgroup $G^\prime$ generated by 
$$
(x,y)\mapsto (\frac{1}{x},\frac{1}{y})\ \text{and}\ (x,y)\mapsto(y,-x).
$$
Since $\nu(G^\prime)\simeq\fC_2^2$,  $S$ is not $G$-solid. In Lemma~\ref{lemm:action-5}, we describe the generators of $\mathrm{Bir}^{G}(S)$ in a more general setting. 

Keeping in mind Lemma~\ref{lemma:P1-P1-rigid}, we need a classification for small $G_T$. Recall that $G_T\subset\mathbb{G}_m^2$ acts on $S\simeq\mathbb{P}^1\times\mathbb{P}^1$ by scaling the first coordinates on each factor. 

\begin{lemm}
If $\nu(G)=\fD_4$ as in \eqref{eqn:nu}, then 
\begin{itemize}
\item either $G_T=\langle (\zeta_n,1),(1,\zeta_n)\rangle\simeq \fC_n^2$ for some $n$, or
\item $n$ is even and $G_T=\langle (\zeta_n,\zeta_n),(\zeta_n^2,1)\rangle\simeq \fC_n\times \fC_{n/2}$.    
\end{itemize}
\end{lemm}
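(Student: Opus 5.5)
The plan is to treat $G_T$ as a finite subgroup of the torus $\bG_m^2$ that is stable under conjugation by $G$. Since $G_T$ is normal in $G$, conjugation by elements of $G$ induces an action of $\nu(G)=\fD_4$ on $G_T$. I will write this action on cocharacters, that is, on $\mathbb{Z}^2$ with $\bG_m^2=\mathbb{Z}^2\otimes\bG_m$. The normalizer of the torus acts on the translations through $\mathrm{GL}_2(\mathbb{Z})$. The subgroup $\fD_4$ acting transitively on $L_{11},L_{12},L_{21},L_{22}$ is the standard signed permutation group, generated by the swap $(a,b)\mapsto(b,a)$ (coming from $(x,y)\mapsto(y,x)$) and by the inversion $(a,b)\mapsto(a^{-1},b)$ (coming from $x\mapsto 1/x$). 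So the statement reduces to classifying finite subgroups $A\subset \bG_m^2$ invariant under these signed permutations.

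First, let $n$ be the exponent of $A=G_T$, and consider the projection $p_1(A)\subset\bG_m$ to the first coordinate. This is cyclic, say $\mu_m$. By swap-invariance, $p_2(A)=\mu_m$ as well, and the exponent $n$ of $A$ equals $m$. Next, look at $A_1:=A\cap(\bG_m\times 1)=\mu_k\times 1$; by swap-invariance, $A\cap(1\times\bG_m)=1\times\mu_k$. Then $A\supset \mu_k\times\mu_k$, and $A/(\mu_k\times\mu_k)$ is the graph of an isomorphism $\mu_m/\mu_k\to\mu_m/\mu_k$. This isomorphism is multiplication by some unit $u$ modulo $m/k$.

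The invariance conditions now pin down $u$. Swap-invariance forces $u^2\equiv1$. Invariance under $(a,b)\mapsto(a^{-1},b)$ says the graph of $u$ equals the graph of $-u$, so $u\equiv -u \pmod{m/k}$, i.e.\ $m/k\in\{1,2\}$. If $m/k=1$, then $A=\mu_n^2$, which is the first case. If $m/k=2$, then $u=1$ and $A$ is generated by $\mu_{n/2}^2$ together with $(\zeta_n,\zeta_n)$. In this case $A=\langle(\zeta_n,\zeta_n),(\zeta_n^2,1)\rangle\simeq \fC_n\times\fC_{n/2}$, since $(\zeta_n,\zeta_n)$ and $(\zeta_n^2,1)$ generate $\mu_{n/2}^2$ modulo the diagonal element; this is the second case, and $n$ is even.

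The main point I need to check carefully is that $\nu(G)=\fD_4$ really acts on $G_T$ through the full signed-permutation group. This requires showing that conjugation by a lift of each element of $\fD_4$ acts on translations exactly as the corresponding linear automorphism of $\mathbb{Z}^2$. This holds because the normalizer is $\bG_m^2\rtimes\fD_4$ and translations commute with each other, so the translation parts of the lifts act trivially. A minor point is coordinates: the fixed choice $P_1,P_2=[0:1],[1:0]$ only determines the torus, and any conjugation preserving it just re-chooses the lifts, which does not affect the action on $G_T$.
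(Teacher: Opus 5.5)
Your proof is correct, and it takes a different route from the paper's. The paper starts from an element $g=(\zeta_n^a,\zeta_n^b)\in G_T$ of maximal order $n$, so that $G_T\subset\langle(\zeta_n,1),(1,\zeta_n)\rangle$. It then uses the $\nu(G)$-action to produce some $(\zeta_n^c,\zeta_n)\in G_T$, multiplies by the inverse of its conjugate $(\zeta_n^c,\zeta_n^{-1})$ to get $(1,\zeta_n^2)$ and hence $(\zeta_n^2,1)$, and finishes with a parity split on $c$ and an index-$2$ count. You instead apply Goursat's lemma to $G_T\subset\mu_n\times\mu_n$ and convert invariance into conditions on the gluing data: swap-invariance gives $N_1=N_2=\mu_k$, and invariance under $(a,b)\mapsto(a^{-1},b)$ gives $2u\equiv 0 \pmod{n/k}$, hence $n/k\le 2$. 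The paper's identity $(\zeta_n^c,\zeta_n)(\zeta_n^c,\zeta_n^{-1})^{-1}=(1,\zeta_n^2)$ is the elementwise form of your condition $u\equiv -u$. Your version is more structural and more self-contained. It justifies explicitly that $\nu(G)$ acts on $G_T$ through the full signed-permutation group, which the paper takes for granted. It also avoids the paper's unexplained step that some $(\zeta_n^c,\zeta_n)$ lies in $G_T$; that step really needs a B\'ezout argument using $\gcd(a,b,n)=1$ together with swap-invariance. The paper's argument is shorter and produces the generators directly. One wording fix: in the case $n/k=2$, the clean justification is as follows. The subgroup $\langle(\zeta_n,\zeta_n)\rangle$ of order $n$ and $\langle(\zeta_n^2,1)\rangle$ of order $n/2$ intersect trivially and together generate $\{(\zeta_n^a,\zeta_n^b): a\equiv b \pmod 2\}$, which gives $\fC_n\times\fC_{n/2}$. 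Your phrase ``generate $\mu_{n/2}^2$ modulo the diagonal element'' does not say this.
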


\begin{proof}
Let $g\in G_T$ be an element of the largest order, say $n$. Then $g=(\zeta_n^a,\zeta_n^b)$, where $\gcd(a,b,n)=1$, and $G_T\subset \langle (\zeta_n,1),(1,\zeta_n)\rangle\subset\mathbb{G}_m^2$.  Using the conjugation 
action of $\nu(G)$ on $G_T$, we see that $(\zeta_n^c,\zeta_n)\in G_T$, for some $c$, and also that $(\zeta_n^c,\zeta_n^{-1})\in G_T$, so that
$$
(\zeta_n^c,\zeta_n)(\zeta_n^c,\zeta_n^{-1})^{-1}=(1,\zeta_n^2)\in G_T,
$$
and $(\zeta_n^2,1)\in G_T$ as well. If $c$ is even, then $(1,\zeta_n)\in G_T$, which gives $G_T=\langle (\zeta_n,1),(1,\zeta_n)\rangle$. If $c$ is odd, then $(\zeta_n,\zeta_n)\in G_T$, which gives 
$$
\langle (\zeta_n,\zeta_n),(\zeta_n^2,1)\rangle\subset G_T.
$$
If $n$ is odd, this gives 
$$
G_T=\langle (\zeta_n,1),(1,\zeta_n)\rangle.
$$
If $n$ is even, then 
$$
\langle (\zeta_n,\zeta_n),(\zeta_n^2,1)\rangle\simeq \fC_n\times \fC_{n/2}
$$ 
is a subgroup of index $2$ in $\langle (\zeta_n,1),(1,\zeta_n)\rangle$.
\end{proof}

Applying Lemma~\ref{lemma:P1-P1-rigid}, we obtain:

\begin{coro}
If $\nu(G)=\fD_4$ and $|G_T|\ne 1,2$, then $S$ is $G$-birationally rigid, and 
$\bar{\beta}(\Bir^G(S))=\bar{\beta}(\Aut^G(S))$.
\end{coro}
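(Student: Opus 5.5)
The plan is to reduce the corollary directly to Lemma~\ref{lemma:P1-P1-rigid}. Under the hypothesis $\nu(G)=\fD_4$, the lemma applies as soon as $|G_T|\notin\{1,2,3,5\}$. Since we assume $|G_T|\neq 1,2$, it remains to rule out $|G_T|\in\{3,5\}$. For this I will use the structure lemma just proved: when $\nu(G)=\fD_4$, the group $G_T$ is either $\fC_n^2$ or $\fC_n\times\fC_{n/2}$ with $n$ even.

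In the first case $|G_T|=n^2$, which is never $3$ or $5$. In the second case $|G_T|=n^2/2$ with $n$ even, so $|G_T|=2m^2$ where $n=2m$. This is even, so again it is never $3$ or $5$. Note that the case $n=1$ gives $|G_T|=1$, and the case $n=2$ in the second alternative gives $|G_T|=2$; both are excluded by the hypothesis.

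Therefore $|G_T|\notin\{1,2,3,5\}$, and Lemma~\ref{lemma:P1-P1-rigid} gives that $S$ is $G$-birationally rigid and $\bar{\beta}(\Bir^G(S))=\bar{\beta}(\Aut^G(S))$.

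There is no real obstacle here. The only point needing care is the orders in the structure lemma: in the case $n$ even, the index-$2$ subgroup of $\fC_n^2$ has order $n^2/2$, which is even precisely because $n$ is even.
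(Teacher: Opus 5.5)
Your proposal is correct and follows essentially the paper's route: the paper also obtains this corollary by applying Lemma~\ref{lemma:P1-P1-rigid}, with the structure lemma ($G_T\simeq\fC_n^2$ or $\fC_n\times\fC_{n/2}$, hence $|G_T|\in\{n^2,\,n^2/2\}$) ruling out $|G_T|\in\{3,5\}$. You make that verification explicit, whereas the paper's one-line proof only says the argument is similar to the proof of Lemma~\ref{lemma:P1-P1-rigid}.
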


\begin{proof}
Similar to the proof of Lemma~\ref{lemma:P1-P1-rigid}.
\end{proof}

Assume that $\nu(G)=\fD_4$ and $|G_T|= 1,2$, and $S^G=\varnothing$. Then, up to conjugation, one of the following holds:
\begin{enumerate}
\item $G\simeq \fD_4$, generated by 
$$
(x,y)\mapsto(-y,-x)\quad\text{and}\quad  (x,y)\mapsto(\frac1y,x).
$$
\item $G\simeq \fC_2\times\fD_4\simeq \fC_2^2\rtimes \fC_2^2$, generated by
$$
(x,y)\mapsto(-x,-y),\quad(x,y)\mapsto(y,x),\quad\text{and}\quad  (x,y)\mapsto(\frac1y,x).
$$
\item $G\simeq \fC_2.\fD_4\simeq \fC_2^2\rtimes\fC_4$, with  GapID {\tt (16,3)},   generated by
$$
(x,y)\mapsto(-x,-y),\quad(x,y)\mapsto(y,-x),\quad\text{and}\quad  (x,y)\mapsto(\frac1y,x).
$$
\end{enumerate}
In Case (1), $\mathrm{Out}(\fD_4)\simeq \fC_2$, and  $\mathrm {Aut}^G(S)\simeq (\fC_2\times \fC_4)\rtimes\fC_2^2$, with GapID {\tt (32,49)}, that is generated by $G$, and $(x,y)\mapsto (-x,y)$, and $(x,y)\mapsto (ix,iy)$, which implies that the $G$-action on $S$ is unique. In this case, $G$ is conjugated in $\mathrm{Aut}(S)$ to the subgroup $G^\prime$ generated by 
$$
(x,y)\mapsto (\frac{1}{x},\frac{1}{y})\ \text{and}\ (x,y)\mapsto(iy,ix).
$$
Since $\nu(G^\prime)\simeq\fC_2^2$,  $S$ is not $G$-solid (we will describe the generators of  $\mathrm{Bir}^{G}(S)$ in Lemma~\ref{lemm:action-4}). 

In Case (2), $\mathrm{Out}(\fC_2\times\fD_4)\simeq \fC_2\times\fD_4$, and  $\mathrm {Aut}^G(S)\simeq (\fC_2\times\fC_4)\times\fD_4$, with GapID {\tt (64,138)}. In this case, $G$ is conjugated in $\mathrm{Aut}(S)$ to the subgroup $G^\prime$ generated by 
$$
(x,y)\mapsto (\frac{1}{x},\frac{1}{y}),\quad(x,y)\mapsto(y,x)\ \text{and}\ (x,y)\mapsto(-x,y).
$$
Since $\nu(G^\prime)\simeq\fC_2^2$,  $S$ is not $G$-solid. In Lemma~\ref{lemm:action-1-2}, we will show that $\mathrm{Bir}^{G}(S)=\mathrm{Aut}^{G}(S)$.

In Case (3), $S$ contains a $G$-orbit of length 4, of points 
$$
(i,i),\quad (i,-i),\quad (-i,i),\quad (-i,-i),
$$
and $G$ acts on these points as the cyclic group $\fC_4$, so  
$G$ is conjugated in $\mathrm{Aut}(S)$ to a subgroup $G^\prime$ with $\nu(G')\simeq\fC_4$. Thus $S$ is $G$-birationally rigid, and $\bar{\beta}(\Bir^G(S))=\bar{\beta}(\Aut^G(S))$, by Lemma~\ref{lemma:P1-P1-rigid}. Arguing as in the proof of Lemma~\ref{lemma:P1-P1-rigid}, one can show that $S$ is $G$-birationally superrigid.

\subsection*{Non-solid groups}
If $\nu(G)\simeq \fC_2^2$, then $S$ is not $G$-solid, because $S$ has at least two $G$-orbits of length $2$. One of them is formed by points 
$$
L_{11}\cap L_{21}=(0,0)\quad \text{and}\quad  L_{12}\cap L_{22}=(\infty,\infty),
$$
and another by 
$$
L_{12}\cap L_{21}=(\infty,0)\quad \text{and}\quad L_{11}\cap L_{22}=(0,\infty).
$$
Blowing up one of these orbits $\rho\colon\tilde{S}\to S$,  we obtain a $G$-Sarkisov link: 
$$
\xymatrix{
&\tilde{S}\ar[dl]_{\rho}\ar[dr]^{\pi}&\\
S&&\mathbb{P}^1}
$$
where $\pi$ is a $G$-conic bundle. In particular, $S$ is not $G$-solid, and therefore not $G$-birationally rigid. 


\begin{prop}\label{prop:P1P1gp}
Suppose that $\nu(G)=\fC_2^2$ as in \eqref{eqn:nu}, and $S^G=\varnothing$. 
Then, up to conjugation in $\Aut(S)$, one of the following holds:
\begin{enumerate} 
\item $G$ is generated by 
    $$
    \mathrm{diag}(\zeta_n,1),\quad  \mathrm{diag}(1,\zeta_n),\quad (\frac1x,\frac1y),\quad
(y,x),
    $$
    for $n\geq 2$.
    \item  $G$ is generated by 
    $$
    \mathrm{diag}(\zeta_n,1),\quad  \mathrm{diag}(1,\zeta_n),\quad (\frac1x,\frac1y),\quad
(\zeta_{2n}y,\zeta_{2n}x),
    $$
      for some even $n$.
 \item  $G$ is generated by 
    $$
    \mathrm{diag}(\zeta_n^r,1),\quad  \mathrm{diag}(1,\zeta^r_n),\quad \mathrm{diag}(\zeta_n,\zeta_n),\quad (\frac1x,\frac1y),\quad
(y,x),
    $$
    for some $r,n\geq 2$, where $r\mid n$.
    \item  and $G$ is generated by 
    $$
    \mathrm{diag}(\zeta_n^r,1),\quad  \mathrm{diag}(1,\zeta^r_n),\quad \mathrm{diag}(\zeta_n,\zeta_n),\quad (\frac1x,\frac1y),\quad
(\zeta_{2n}y,\zeta_{2n}x),
    $$
    for some even $n\geq 2$ and  $r\mid n$,  $r\geq 2$.
   \item   $G$ is generated by 
    $$
    \mathrm{diag}(\zeta_n^r,1),\quad  \mathrm{diag}(1,\zeta^r_n),\quad \mathrm{diag}(\zeta_n,\zeta_n),\quad (\frac1x,\frac1y),\quad
(y,\zeta_{2n}^rx),
    $$
    for some even $r,n\geq 2$ where $r\mid n$.
     \item $G$ is generated by 
    $$
    \mathrm{diag}(\zeta_n^r,1),\quad  \mathrm{diag}(1,\zeta^r_n),\quad \mathrm{diag}(\zeta_n,\zeta_n),\quad (\frac1x,\frac1y),\quad
(\zeta_{2n}y,\zeta_{2n}^{1+r}x),
    $$
   for some even $n,r\geq 2$, where  $r\mid n$.
\end{enumerate}
\end{prop}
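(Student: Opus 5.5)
We classify $G$ through its translation part and one lift of each element of $\nu(G)$. Since $\nu(G)=\fC_2^2$ acts transitively on $L_{11},L_{12},L_{21},L_{22}$, it is the Klein subgroup of $\fD_4$ generated by the swap $\sigma\colon(x,y)\mapsto(y,x)$ (up to translation) and the inversion $\iota\colon(x,y)\mapsto(1/x,1/y)$ (up to translation). The other Klein subgroup, generated by $(1/x,y)$ and $(x,1/y)$, preserves each ruling, which contradicts $\rk\Pic(S)^G=1$.

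\textbf{Step 1: the translation part.} The element $\iota$ acts on $G_T$ by inversion, which preserves every subgroup. The element $\sigma$ acts by swapping coordinates. So $G_T\subset\mathbb{G}_m^2$ is a finite swap-invariant subgroup. Let $n$ be the exponent and take $G_T\subset\langle(\zeta_n,1),(1,\zeta_n)\rangle$. Arguing as in the preceding lemma, let $r$ be minimal with $(\zeta_n^r,1)\in G_T$. Then $G_T\cap(\mu_n\times1)=\langle(\zeta_n^r,1)\rangle$, and by swap invariance also $\langle(1,\zeta_n^r)\rangle\subset G_T$. The quotient $G_T/\langle(\zeta_n^r,1),(1,\zeta_n^r)\rangle$ embeds in $\mu_r^2$ as the graph of an automorphism $\alpha$ of $\mu_r$. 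Swap invariance forces $\alpha^2=1$. Using the maximal order $n$ together with a conjugation by a diagonal element $\mathrm{diag}(\lambda,\mu)$ (which commutes with the swap only if $\lambda=\mu$, so care is needed), we aim to reduce to the diagonal $\langle(\zeta_n,\zeta_n)\rangle$ plus $\langle(\zeta_n^r,1),(1,\zeta_n^r)\rangle$. When $r=1$ this gives $\fC_n^2$. I expect some antidiagonal-type graphs $(\zeta,\zeta^{-1})$ to be absorbable by replacing $y\mapsto 1/y$, which is not allowed since the swap is then altered. This is the point to check carefully, and it may be where the $r\geq 2$ versus $r=1$ split comes from.

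\textbf{Step 2: lifts.} A lift of $\iota$ has the form $(a/x,b/y)$. Conjugating by $\mathrm{diag}(\sqrt a,\sqrt b)$ makes it $(1/x,1/y)$, and this conjugation preserves $G_T$. A lift of $\sigma$ then has the form $(cy,dx)$. Conjugating by $\mathrm{diag}(\lambda,1)$ changes $(c,d)$ to $(c\lambda,d/\lambda)$ and changes $\iota$ to $(\lambda^2/x,1/y)$. Hence we may only use $\lambda$ with $\lambda^2\in G_T$-translates, that is, modify by $\lambda=\pm1$ times $G_T$-elements, and possibly $\lambda$ with $\lambda^2$ in the first coordinate of $G_T$. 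The relations constrain the pair $(c,d)$:
\begin{itemize}
\item the square of the swap lift is $(cd,cd)\in G_T$;
\item the commutator with $\iota$ is $(cd^{-1}\cdot\ldots)$, i.e. $\iota\sigma\iota\sigma^{-1}$ gives $(1/(cd),1/(cd))$ up to ordering, and this must lie in $G_T$.
\end{itemize}
So $cd$ lies in the diagonal part of $G_T$, and $(c,d)$ matters only modulo $G_T$ and modulo the allowed conjugations. For $r=1$ the diagonal part is $\mu_n$, so $cd\in\mu_n$. After normalizing $c$ to $1$, $d$ is either trivial or $\zeta_{2n}$-twisted; the latter can be symmetrized to $(\zeta_{2n}y,\zeta_{2n}x)$, and it is new only when $n$ is even. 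This yields cases (1) and (2). For $r\geq2$, the classes of $(c,d)$ modulo $\langle(\zeta_n,\zeta_n),(\zeta_n^r,1),(1,\zeta_n^r)\rangle$ with $cd\in\mu_n$ give the four options in cases (3)–(6): trivial, $(\zeta_{2n},\zeta_{2n})$, $(1,\zeta_{2n}^r)$, and $(\zeta_{2n},\zeta_{2n}^{1+r})$. The parity conditions arise where a twist becomes a $G_T$-element otherwise.

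\textbf{Step 3: exclude fixed points and check distinctness.} Impose $S^G=\varnothing$ and discard the configurations that fix a point such as $(1,1)$ or $(\pm1,\pm1)$; this gives $n\ge2$, and the same requirement kills odd-$r$ twists. The main obstacle is the bookkeeping of Step 2: determining exactly which twists $(c,d)$ survive modulo $G_T$ and the residual conjugations (including $y\mapsto -y$ type elements normalizing everything), without over- or under-counting.
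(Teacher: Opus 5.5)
Your strategy is the paper's in different packaging. The paper applies Goursat to $G^0=H\times_Q H$ with $H\simeq\fD_n$, getting $Q\in\{\fC_2,\fD_r\}$ and $K_1=K_2$ cyclic. You apply it to $G_T$ directly. Both then parametrize the swap lift $\tau=(cy,dx)$ by roots of unity modulo $G_T$.

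\textbf{The gap in Step 1.} The genuine gap is the one you flag at the end of Step 1, and it cannot be closed as you hope. Swap-invariance only gives
$G_T=\{(\zeta,\eta)\in\mu_n^2:\ \bar\eta=\bar\zeta^{\,u}\ \text{in}\ \mu_n/\mu_{n/r}\}$ with $u^2\equiv 1\pmod r$, and the list needs $u\equiv 1$.

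The case $u\equiv -1$ \emph{is} removable, contrary to your worry. Conjugation by $(x,y)\mapsto(x,1/y)$ fixes $\iota=(1/x,1/y)$. It sends $(cy,dx)$ to $(c/y,1/(dx))=\iota\circ(c^{-1}y,dx)$. So the conjugated group is again in normal form, with $u$ replaced by $-u$.

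If $u\not\equiv\pm1\pmod r$, nothing removes it. This is possible whenever $(\mathbb{Z}/r)^\times$ is not cyclic, e.g. $r=8$, $u=3$. For $n\ge 3$ the $H$-orbit of length $2$ on each factor is unique. Hence any conjugating element normalizes the torus and can only change $u$ to $\pm u$.

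A concrete example is $G=\langle(\zeta_8x,\zeta_8^3y),(1/x,1/y),(y,x)\rangle\simeq\fC_8\rtimes\fC_2^2$. It has $\nu(G)=\fC_2^2$, $\rk\Pic(S)^G=1$ and $S^G=\varnothing$. Its translation subgroup is cyclic of order $8$ but contains no element $(\zeta,\zeta^{\pm1})$ of order $8$. The only cyclic $G_T$ of order $8$ occurring in (1)--(6) is $\langle(\zeta_8,\zeta_8)\rangle$.

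So Step 1 cannot be completed to the list as stated. The paper's proof passes over exactly this point: it asserts $G_T=\langle(\zeta_n^r,1),(1,\zeta_n^r),(\zeta_n,\zeta_n)\rangle$ from $K_1=K_2=\fC_{n/r}$. That tacitly assumes the two surjections $\fD_n\to\fD_r$ agree on rotations. A correct statement needs additional $u$-twisted families.

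\textbf{Errors in Steps 2 and 3.} Step 2 contains an error that matters. The second relation is not $(1/(cd),1/(cd))$, which is just $\tau^{-2}$. The correct relation is $(\iota\tau)^2=(d/c,c/d)$, equivalently $\iota\tau\iota\tau^{-1}=(c^{-2},d^{-2})$. This relation, together with $(cd,cd)\in G_T$, forces $c,d\in\mu_{2n}$. Writing $c=\zeta_{2n}^{j}$ and $d=\zeta_{2n}^{k}$, it gives $j\equiv k$ modulo $2$ and modulo $r$. After reducing modulo $G_T$, this is what leaves only the four pairs listed in the paper's proof.

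With only $cd\in\mu_n$, your normalization $c=1$ is unjustified. Conjugating by $\mathrm{diag}(\lambda,1)$ turns $\iota$ into $(\lambda^2/x,1/y)$, which lies in $G_T\iota$ only if $(\lambda^2,1)\in G_T$.

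Finally, the twists with $r$ odd are not excluded by the fixed-point condition. They are excluded because $\tau^2=(cd,cd)$ would then not lie in $G_T$. The hypothesis $S^G=\varnothing$ only serves to rule out $G_T=1$.
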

\begin{proof}
First observe that up, to scaling by the torus, we may assume that $G$ is generated by $G_T$, 
$$
\sigma: (x,y)\to (\frac1x,\frac1y),\quad\text{and}\quad  \tau: (x,y)\to (by,ax),
$$
for some $a,b\in k^\times$. It follows that 
\begin{align}\label{eq:ab}
  \tau^2=\mathrm{diag}(ab,ab)\in G_T,\quad \text{and}\quad (\sigma\tau)^2=\mathrm{diag}(b/a,a/b)\in G_T.  
\end{align}

When $G_T=1$, we know that $ab=1$ and $a/b=1$. It follows that $(a,b)=(1,1)$ or $(-1,-1)$. These two choices are conjugated in $\Aut(S)$ by $(x,y)\mapsto (-x,y)$. In this case, $G$ fixes a point in $S$, contradicting our assumption. 

When $G_T\ne 1$, Goursat's lemma implies the exact sequence 
$$
1\to H\times_Q H\to G\to \fC_2\to 1,
$$
where $H\times_Q H$ acts on $\bP^1\times\bP^1$ without switching the factors. Since $G_T\ne 1$, we know that $H$ is a dihedral group $\fD_n$ of order $2n$, $n\geq 2$. Note that $\fD_2\simeq \fC_2^2$. We know that $Q\ne 1$ since otherwise $\nu(G)=\fD_4$. We have the following possibilities 
$$
Q=\fC_2,\quad \fD_r, \quad \text{where}\quad  r\mid n.
$$
Let $\varphi_1,\varphi_2: \fD_n\to Q$ be the corresponding homomorphisms defining $H\times_Q H$, $K_1=\ker(\varphi_1)$ and $K_2=\ker(\varphi_2)$. Recall that $K_1\times K_2$ is a subgroup of $ H\times_Q H$. Since $\rk\Pic(S)^G=1$, we know that $K_1=K_2$.

When $Q=\fC_2$, if $n$ is even and $K_1=K_2=\fD_{\frac n2}$, then $\bar G=\fD_4$. It follows that $K_1=K_2=\fC_n$, and $G_T=K_1\times K_2$ is generated by 
$$
\mathrm{diag}(\zeta_n,1),\quad\mathrm{diag}(1,\zeta_n).
$$
It follows that $a=\zeta_{2n}^{r_1}$, $b=\zeta_{2n}^{r_2}$, for some integers $r_1,r_2$. Up to multiplying by an element in $G_T$, we may assume that $r_1,r_2\in\{0,1\}$. From \eqref{eq:ab}, we see that $r_1=r_2=0$ or $1$. We obtain Case (2) and (3).

When $Q=\fD_r$, where $r\mid n$, we have $K_1=K_2=\fC_{\frac nr}$, and $G_T$ is generated by 
$$
\mathrm{diag}(\zeta_n^r,1),\quad \mathrm{diag}(1,\zeta_n^r),\quad \mathrm{diag}(\zeta_n,\zeta_n).
$$
As above, we know that $a=\zeta_{2n}^{r_1}$, $b=\zeta_{2n}^{r_2}$ and $r_1+r_2$ is even. Up to multiplying by elements in $G_T$, we may assume that $0\leq r_1\leq r_2<r$, and $r_1\in\{0,1\}$. Recall that $(\sigma\tau)^2=\mathrm{diag}(\zeta_{2n}^{r_2-r_1},\zeta_{2n}^{r_1-r_2})\in G_T$. It follows that $\zeta_{n}^{r_1-r_2}$ is a power of $\zeta_n^r$, and $r_1-r_2=0\pmod r$. Thus, we have at most four possibilities 
$$
(a,b)=(1,1),\quad (\zeta_{2n},\zeta_{2n}), \quad (1,\zeta_{2n}^r),\quad (\zeta_{2n},\zeta_{2n}^{1+r}),
$$
where the latter two are only possible when $r$ is even.
When $n$ is odd, the first two cases give conjugated subgroups in $\Aut(S)$, which correspond to Case (3). When $n$ is even, the four possible cases give non-isomorphic groups in general, which correspond to Cases (3)--(6).
\end{proof}

Using Proposition~\ref{prop:P1P1gp}, we proceed to analyze different actions up to conjugation in the Cremona group. For this, we need to describe generators of $\Bir^G(S)$ in each case. Recall that $S$ contains two $G$-orbits of length $2$: $(0,0)\cup(\infty,\infty)$ and $(0,\infty)\cup(0,\infty)$. Moreover, these are the only $G$-orbits of length $2$ except for one case:

\begin{lemm}
\label{lemm:C23}
Suppose that $G$ is the group described in Case (3) in Proposition~\ref{prop:P1P1gp}, with $n=r=2$. Then $G\simeq \fC_2^3$, $\Out(G)=\GL_3(\bF_2)$, and 
$$
\bar\beta(\Aut^G(S))=\bar\beta(\Bir^G(S))=\fS_4.
$$
In particular, $G$ gives rise to 7 non-birational actions on $S$. 
\end{lemm}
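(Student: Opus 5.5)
With $n=r=2$, Case (3) of Proposition~\ref{prop:P1P1gp} gives $G_T=\langle \mathrm{diag}(-1,-1)\rangle$, and $G$ is generated by
$$
\alpha\colon(x,y)\mapsto(-x,-y),\quad \sigma\colon(x,y)\mapsto(\tfrac1x,\tfrac1y),\quad \tau\colon(x,y)\mapsto(y,x).
$$
These three involutions commute pairwise, so $G\simeq\fC_2^3$ and $\Out(G)=\Aut(G)=\GL_3(\bF_2)$, a group of order $168$.

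The plan has two parts: an upper bound on $\bar\beta(\Bir^G(S))$ from a birational invariant, and the matching lower bound from explicit automorphisms. The number of actions is then $168/24=7$.

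\textbf{Upper bound.} I would use a $G$-birational invariant attached to the nontrivial elements of $G$: the presence of fixed curves of positive genus, or at least some class in $\Burn_2(G)$, which $\Bir^G(S)$ must preserve. Compute the fixed loci of the seven nontrivial elements.
\begin{itemize}
\item $\alpha$ fixes four points.
\item $\sigma$ fixes the four points $(\pm1,\pm1)$.
\item $\alpha\sigma\colon(x,y)\mapsto(-1/x,-1/y)$ fixes the four points $(\pm i,\pm i)$.
\item $\tau$, $\alpha\tau$, $\sigma\tau$, $\alpha\sigma\tau$ each fix a curve of type $(1,1)$: $y=x$, $y=-x$, $xy=1$, $xy=-1$ respectively.
\end{itemize}
So $G$ splits into the elements $\{\alpha,\sigma,\alpha\sigma\}$, which together with $1$ form the subgroup $N=\langle\alpha,\sigma\rangle\simeq\fC_2^2$ acting without invariant factor swap, and the four elements of the coset $G\setminus N$, which fix rational curves.

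The key step is to show that every element of $\Bir^G(S)$ normalizes $N$, i.e.\ that $\bar\beta(\Bir^G(S))$ lies in the stabilizer of the plane $N\subset\bF_2^3$. This stabilizer is $\fS_4\subset\GL_3(\bF_2)$, of order $24$. Two routes are available:
\begin{itemize}
\item \emph{Burnside route.} Distinguish $N$ intrinsically by the incompressible symbols in $\Burn_2(G)$ contributed by the curves fixed by the $\tau$-coset elements. Each of those curves is a $(1,1)$-curve with residual action, and their symbols distinguish the elements outside $N$ from those inside $N$; if possible, the symbols should also be checked to be nontrivial classes.
\item \emph{Equivalent criterion.} An element $g$ lies outside $N$ if and only if the $\langle g\rangle$-action on $S$ is equivariantly birational to a conic bundle with nonrational fixed curve structure. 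For an involution, the fixed curves' genus and count form a birational invariant (Bayle--Beauville). Here all fixed curves are rational, however, so this needs refinement: use instead that a fixed \emph{curve} versus only isolated fixed points is a birational invariant for involutions (the existence of a fixed curve is invariant by the No-Name-type / Burnside argument, since isolated fixed points can be blown up away).
\end{itemize}

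This step, showing that the property ``$g$ fixes a curve pointwise'' is $G$-birationally invariant, is the main obstacle. I would attempt it via $\Burn_2(G)$: the symbol $(\langle g\rangle, k(\bP^1)\actsfromleft G/\langle g\rangle,(\chi))$ coming from a fixed curve with a nontrivial residual $\fC_2^2$-action on $\bP^1$ is incompressible and cannot be produced by blowups at points. Alternatively, I would classify $G$-Sarkisov links from $S$: they start at the two length-$2$ orbits and at length-$4$ orbits, and one would check that they preserve this partition.

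\textbf{Lower bound.} Exhibit $\Aut^G(S)$ acting on $G$ by conjugation with image all of $\fS_4$. The required generators are:
\begin{itemize}
\item $(x,y)\mapsto(ix,iy)$, which swaps $\sigma$ and $\alpha\sigma$ and fixes $\alpha$;
\item $(x,y)\mapsto(\tfrac{x+1}{x-1},\tfrac{y+1}{y-1})$, a lift of an element of the octahedral group normalizing the Klein group, which permutes $\alpha,\sigma,\alpha\sigma$ cyclically;
\item $(x,y)\mapsto(x,-y)$, which sends $\tau\mapsto\alpha\tau$ and centralizes $N$;
\item $(x,y)\mapsto(x,1/y)$, which sends $\tau\mapsto\sigma\tau$ and centralizes $N$.
\end{itemize}
The last two realize $\mathrm{Hom}(G/N,N)\simeq\fC_2^2$. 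The first two give all of $\GL(N)\simeq\fS_3$. Together these generate $\fC_2^2\rtimes\fS_3\simeq\fS_4$, which equals the whole stabilizer of $N$. Hence
$$
\bar\beta(\Aut^G(S))=\bar\beta(\Bir^G(S))=\fS_4,
$$
and the number of non-birational actions is $[\GL_3(\bF_2):\fS_4]=7$.
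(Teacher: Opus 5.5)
Your proposal is correct and follows essentially the paper's proof. The upper bound uses the incompressible symbols $(\langle g\rangle,\fC_2^2\actsfromleft k(\bP^1),(1))$ attached to the four conics fixed by the coset $G\setminus N$, which forces $\bar\beta(\Bir^G(S))$ into the stabilizer $\fS_4$ of $N$. The lower bound comes from explicit elements of $\Aut^G(S)$. Two slips should be fixed, though neither affects the conclusion.

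\textbf{The lower-bound generator.} The map $(x,y)\mapsto(\frac{x+1}{x-1},\frac{y+1}{y-1})$ is an involution, so it cannot induce a $3$-cycle. Conjugation by it swaps $\alpha$ and $\sigma$ and fixes $\alpha\sigma$. Together with $(x,y)\mapsto(ix,iy)$, which swaps $\sigma$ and $\alpha\sigma$, it still generates $\GL(N)\simeq\fS_3$, so your count $24=4\cdot 6$ stands.

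\textbf{The invariant.} The relevant invariant is not ``$g$ fixes a curve pointwise'', and your ``equivalent criterion'' bullet is false. Blow up the orbit $\{(0,0),(\infty,\infty)\}$: since $\alpha$ acts as $-1$ on the tangent space there, $\alpha$ fixes the exceptional curves pointwise. What is invariant is the incompressible part of the Burnside class. Incompressibility of these symbols comes precisely from the residual $\fC_2^2$-action on each fixed conic being non-cyclic: exceptional curves of point blowups for abelian $G$ only carry cyclic residual actions. This is exactly the Burnside route you ultimately adopt, so the argument goes through.
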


\begin{proof}
We use the Burnside formalism of \cite{BnG}. There are four involutions in $G$ fixing a conic, with a residual $\fC_2^2$-action, giving rise to incompressible symbols 
   $$
   (\fC_2,\fC_2^2\actsfromleft k(\bP^1),(1)).
   $$
   These four involutions sum to 0 in $G$. Up to $\GL_3(\bF_2)$-equivalence, there are 7 choices of such 4 involutions, giving rise to $7$ non-birational actions since their corresponding classes in $\Burn^{\mathrm{inc}}_2(G)$ are different. On the other hand, one can check that $\Aut^G(S)$ contains 
   $$
   (x,y)\mapsto (\frac{x-1}{x+1},\frac{y-1}{y+1}),\quad (x,y)\mapsto (\frac1x,y),\quad(x,y)\mapsto (\zeta_4x,\zeta_4y). 
   $$
   Together with $G$, they generate a group of order 192 with GapID {\tt(192,955)}. The image of this group under $\bar\beta$ in $\Out(G)$ is $\fS_4$. Recall that $|\GL_2(\bF_3)|=168$ and by the Burnside formalism the remaining 7 actions are not birational to each other. It follows that 
   $$
\bar\beta(\Aut^G(S))=\bar\beta(\Bir^G(S))=\fS_4.
   $$
\end{proof}

In fact, arguing as in the proof of Lemma~\ref{lemm:action-3} below, we can also show that $\Aut^G(S)=\Bir^G(S)$ in the case of Lemma~\ref{lemm:C23}.
Similarly, $S$ does not have $G$-orbits of length $3$, except for one classical case \cite{Iskovskikh2003,Iskovskikh2008}.

\begin{lemm}
\label{lemm:D6}
If $G$ is the group in Case (3) in Proposition~\ref{prop:P1P1gp}, with $n=r=3$, then $G\simeq \fD_6$, $\bar\beta(\Aut^G(S))$ is trivial, and
$$
\bar\beta(\Bir^G(S))=\Out(G)\simeq\fC_2.
$$
\end{lemm}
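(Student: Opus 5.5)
We first identify $G$ and $\Out(G)$. For $n=r=3$, Case (3) of Proposition~\ref{prop:P1P1gp} says that $G$ is generated by
$$
\delta=\mathrm{diag}(\zeta_3,\zeta_3),\quad \sigma\colon(x,y)\mapsto(\frac1x,\frac1y),\quad \tau\colon(x,y)\mapsto(y,x).
$$
Here $\tau$ commutes with $\delta$ and with $\sigma$, and $\sigma\delta\sigma=\delta^{-1}$. Hence $\langle\delta,\sigma\rangle\simeq\fS_3$, $\tau$ is central, and $G\simeq\fS_3\times\fC_2\simeq\fD_6$. We have $\Out(\fD_6)\simeq\fC_2$, generated by the automorphism fixing $\delta$ and $\tau$ and sending $\sigma\mapsto\sigma\tau$. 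This automorphism interchanges the two conjugacy classes of non-central involutions, $\{\sigma\delta^k\}$ and $\{\sigma\tau\delta^k\}$.

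\emph{The image $\bar\beta(\Aut^G(S))$ is trivial.} The two classes of reflections differ geometrically. The involution $\sigma$ has only the four isolated fixed points $(\pm1,\pm1)$. The involution $\sigma\tau\colon(x,y)\mapsto(\frac1y,\frac1x)$ fixes pointwise the curve $\{xy=1\}$ of bidegree $(1,1)$. Conjugation by a biregular automorphism preserves the fixed-locus type, so it cannot send one class to the other. Every automorphism of $G$ preserving the two classes is inner, since $\fD_6$ has only one nontrivial outer automorphism. Therefore $\bar\beta(\Aut^G(S))=1$.

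\emph{Constructing a birational map that realizes the outer automorphism.} Consider the $G$-orbit of length $3$,
$$
\Sigma=\{(1,1),(\zeta_3,\zeta_3),(\zeta_3^2,\zeta_3^2)\}.
$$
Its points lie on distinct rulings of both families, and all three lie on the $G$-invariant diagonal conic $\Delta=\{x=y\}$. Blowing up $\Sigma$ gives a $G$-surface $\tilde S$, a smooth quintic del Pezzo surface with $\rk\Pic(\tilde S)^G=2$. I would use the classification of $G$-Sarkisov links \cite{Isk}, or the explicit classical description in \cite{Iskovskikh2003,Iskovskikh2008}, to run the two-ray game from $\tilde S$. The other extremal contraction should bring us back to $\bP^1\times\bP^1$: it contracts a $G$-invariant configuration of disjoint $(-1)$-curves built from the strict transform of $\Delta$ and the ruling lines through $\Sigma$. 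This gives a $G$-birational map $\chi\colon S\dasharrow S$.

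Since the target carries a $G$-action with $\rk\Pic^G=1$, no $G$-fixed points, and $\nu(G)\simeq\fC_2^2$, Proposition~\ref{prop:P1P1gp} identifies it with one of the listed models. For group order $12$ with $G_T\simeq\fC_3$, only Case (3) with $n=r=3$ fits. After composing with an element of $\Aut(S)$, we may therefore assume $\chi\in\Bir^G(S)$.

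The remaining task is to show that $\bar\beta(\chi)$ is nontrivial. I would do this by tracking fixed curves, or equivalently the incompressible symbols in $\Burn_2(G)$. The curve fixed by $\sigma\tau$ is $\{xy=1\}$, which meets $\Sigma$ at $(1,1)$ and at no other point of $\Sigma$. I expect that $\chi$ contracts this curve's orbit, or exchanges it with the exceptional data, so that after $\chi$ the fixed curve belongs to an element of the class of $\sigma$. That would force $\chi$ to induce the outer automorphism. Combined with $|\Out(G)|=2$, this gives $\bar\beta(\Bir^G(S))=\Out(G)$.

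\textbf{Main obstacle.} The main obstacle is pinning down the link explicitly: which invariant set of $(-1)$-curves is contracted, and checking that the target really is $\bP^1\times\bP^1$ and not a degree $6$ del Pezzo surface or a conic bundle. The same explicit form is needed to compute the effect of $\chi$ on the fixed-curve data. If the geometric description becomes unwieldy, I would instead write $\chi$ in coordinates as a bidegree-$(2,2)$ map with base locus $\Sigma$. I would then verify directly that $\chi\sigma\chi^{-1}\in\sigma\tau\langle\delta\rangle$.
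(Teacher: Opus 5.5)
Your identification $G=\langle\delta,\sigma\rangle\times\langle\tau\rangle\simeq\fD_6$, your description of $\Out(G)\simeq\fC_2$, and your proof that $\bar\beta(\Aut^G(S))$ is trivial are correct. The fixed-locus argument (four isolated fixed points for $\sigma\delta^k$ versus the fixed curve $xy=\zeta_3^{-k}$ for $\sigma\tau\delta^k$) is a neat substitute for the paper's computation $\Aut^G(S)=\langle G,\mathrm{diag}(-1,-1)\rangle$.

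\textbf{The gap: the blowup of $\Sigma$ does not return to the quadric.} The construction of $\chi$ fails because blowing up $\Sigma$ does not give a link back to $\bP^1\times\bP^1$. On the quintic del Pezzo surface $\tilde S$, the $(-1)$-curves are $E_1,E_2,E_3$, the six ruling lines $F_j-E_i$, and $\Delta'=F_1+F_2-E_1-E_2-E_3$. Since $(F_1-E_i)\cdot(F_2-E_j)=1$ for $i\neq j$ and $\Delta'\cdot E_i=1$, the only $G$-invariant sets of pairwise disjoint $(-1)$-curves are $\{E_1,E_2,E_3\}$ and $\{\Delta'\}$. So the other side of the link contracts $\Delta'$ alone. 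It lands on a sextic del Pezzo surface with a $G$-fixed point, namely the $G_7$-action of Section~\ref{sect:dp6}. This agrees with Iskovskikh's list, where orbits of length $3$ on the quadric lead to degree $6$.

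Your fallback cannot work either. Since $\langle\delta\rangle$ is characteristic in $G$, any map in $\Bir^G(S)$ satisfies $\chi\delta\chi^{-1}=\delta^{\pm1}$, so each of its two defining pencils is $\delta$-stable. Base points over $\Sigma$ therefore come in $\delta$-orbits of length $3$ with equal multiplicities. If all base points lie over $\Sigma$, the self-intersection $8$ of a bidegree $(2,2)$ pencil would have to be divisible by $3$.

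\textbf{The missing idea: pass through the conic bundle, as the paper does.} First blow up the length-$2$ orbit $(0,\infty)\cup(\infty,0)$; this gives the $G$-conic bundle $xy$ on a sextic del Pezzo surface. Then make the elementary transformation centered at the strict transform of your $\Sigma$, which lies on the strict transform of the diagonal, a $2$-section. Finally contract back to $\bP^1\times\bP^1$. This is the map \eqref{map1} of Lemma~\ref{lemm:action-3} for $n=r=3$. In affine coordinates it reads $\chi(x,y)=\bigl(\frac{y(1-x^2y)}{1-xy^2},\frac{x(1-xy^2)}{1-x^2y}\bigr)$, with base locus $\Sigma\cup\{(0,\infty),(\infty,0)\}$. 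A direct check gives $\chi\delta=\delta\chi$, $\chi\tau=\tau\chi$ and $\chi\sigma=\sigma\tau\chi$, so $\bar\beta(\chi)$ is the nontrivial outer automorphism.

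Your heuristic about fixed curves is right in spirit. This $\chi$ preserves $xy$ and contracts the fibers $xy=1,\zeta_3,\zeta_3^2$ through $\Sigma$. In particular it sends the curve $\{xy=1\}$ fixed by $\sigma\tau$ to the point $(-1,-1)$, an isolated fixed point of $\sigma$.
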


\begin{proof}
Recall that $G\simeq\fD_6\simeq \fC_2\times \fS_3$. Let $\{a,b,c\}$ be a set of generators of $G$, where $a$ is in the center of $G$, $b$ is an order 2 element in $\fS_3$, and  $c$ is an order 3 element in $\fS_3$. Let $\varphi_1:G\to\Aut(S)$ be the homomorphism given by 
$$
(\varphi_1(a))(x,y)= (-x,-y),\quad (\varphi_1(b))(x,y)= (\frac1x,\frac1y), 
$$
$$
(\varphi_1(c))(x,y)= (y,x),
$$
and $\varphi_2:G\to\Aut(S)$ the map given by 
$$
\varphi_2(a)=\varphi_1(a), \quad \varphi_2(b)=\varphi_1(ab),\quad \varphi_1(c)=\varphi_1(c).
$$
Then $\Out(G)$ swaps the actions given by $\varphi_1$ and $\varphi_2$. The map~\eqref{map1} in $\Bir^G(S)$, with $n=2$, also swaps  $\varphi_1$ and $\varphi_2$. Thus 
$$
\bar\beta(\Bir^G(S))=\Out(G).
$$
Using Proposition~\ref{prop:P1P1gp}, we obtain that $\Aut^G(S)$ is generated by 
$G$ and $\mathrm{diag}(-1,-1)$, which implies that $\bar\beta(\Aut^G(S))$ is trivial.
\end{proof}

In the following, we assume that $G$ is not one of the groups described in Lemmas~\ref{lemm:C23} and \ref{lemm:D6}. In particular, $(0,0)\cup(\infty,\infty)$ and $(0,\infty)\cup(\infty,0)$ are the only $G$-orbits of length $2$.  Fix 
$$
\chi\in\Bir^G(S).
$$
If $\chi\not\in\Aut^G(S)$, it can be decomposed into a sequence of $G$-Sarkisov links. On the other hand, every $G$-Sarkisov link that starts at $S$ is given by blowing up $\rho\colon\tilde{S}\to S$ a $G$-orbit $\Sigma\subset S$ such that 
$$
|\Sigma|\in\{2,4,5,6,7\},
$$ 
and $\tilde{S}$ is a del Pezzo surface. 

In fact, $|\Sigma|\ne 5$. Indeed, if $|\Sigma|=5$, it follows that $G\simeq \fC_2\times \fD_5$ is the group described in Case (3) in Proposition~\ref{prop:P1P1gp}, with $n=r=5$. In this case, the only $G$-orbit of length $5$ in $S$ is contained in a curve of degree $(1,1)$, so blowing it up we do not obtain a del Pezzo surface.

If $|\Sigma|=4,6,7$, the $G$-Sarkisov link results in a Yasinsky, Geiser, Bertini involution of the surface $S$, respectively.  Composing $\chi$ with these involutions (if any), we may assume that $|\Sigma|\not\in\{4,6,7\}$.
Thus, $|\Sigma|=2$, and 
$$
\Sigma=(0,0)\cup(\infty,\infty)\quad \text{or}\quad \Sigma=(0,\infty)\cup(\infty,0).
$$
Then $\tilde{S}$ is a smooth del Pezzo surface of degree $6$ with $\mathrm{Pic}(\tilde{S})^G\simeq\mathbb{Z}^2$, and the corresponding $G$-Sarkisov link is:
\begin{equation}
\label{equation:first-link}
\xymatrix{
&\tilde{S}\ar[dl]_{\rho}\ar[dr]^{\pi}&\\
S\ar@{-->}[rr]_{\phi}&&\mathbb{P}^1}
\end{equation}
where  $\pi$ is a conic bundle, and $\phi$ the rational map given by
$$
(x,y)\mapsto\left\{\aligned
&\frac{x}{y}\ \text{if $\Sigma=(0,0)\cup(\infty,\infty)$},\\
&xy\ \text{if $\Sigma=(0,\infty)\cup(\infty,0)$}.
\endaligned
\right.
$$
The second $G$-Sarkisov link used in the decomposition of $\chi$ starts at $\tilde{S}$, and is determined by a $G$-orbit $\tilde{\Sigma}\subset\tilde{S}$ that satisfies the following conditions:
\begin{itemize}
\item[($\diamondsuit$)] each smooth fiber of $\pi$ contains at most one point in $\tilde{\Sigma}$,
\item[($\heartsuit$)] no points of $\tilde{\Sigma}$ are contained in singular fibers of $\pi$,
\end{itemize}
Let $\eta\colon\hat{S}\to\tilde{S}$ be the blow up of $\tilde{\Sigma}$. We expand \eqref{equation:first-link} to the $G$-commutative diagram:
$$
\xymatrix{
&&\hat{S}\ar[dl]_{\eta}\ar[dr]^{\eta^{\prime}}&\\
&\tilde{S}\ar[dl]_{\rho}\ar[dr]^{\pi}&&\tilde{S}^\prime\ar[dl]_{\pi^\prime}\\
S\ar@{-->}[rr]_{\phi}&&\mathbb{P}^1&}
$$
where $\eta^\prime$ is the contraction of the strict transforms of the fibers of $\pi$ that contain points of $\tilde{\Sigma}$, and $\pi^\prime$ is a conic bundle. Moreover, it follows from \cite[Theorem~5]{Iskovskikh1980} that $\tilde{S}^\prime$ is a del Pezzo surface of degree $6$ with $\mathrm{Pic}(\tilde{S}^\prime)^G\simeq\mathbb{Z}^2$. Thus, the third $G$-Sarkisov link used in the decomposition of $\chi$ is 
$$
\xymatrix{
&\tilde{S}^\prime\ar[dr]^{\rho^\prime}\ar[dl]_{\pi^\prime}&\\
\mathbb{P}^1&&S^\prime}
$$
where $S^\prime\simeq \mathbb{P}^1\times\mathbb{P}^1$ with $\mathrm{Pic}(S^\prime)^G=\mathbb{Z}$,
and $\rho^\prime$ is a blow up of a $G$-orbit of length $2$. 
Let $\psi\colon S\dasharrow S^\prime$ be the constructed $G$-birational map. 
Then $\psi$ is a composition of three $G$-Sarkisov links, which are combined in the following $G$-equivariant commutative diagram:
\begin{equation}
\label{quadric}    
\xymatrix{
&&\hat{S}\ar[dl]_{\eta}\ar[dr]^{\eta^{\prime}}&&\\
&\tilde{S}\ar[dl]_{\rho}\ar[dr]^{\pi}&&\tilde{S}^\prime\ar[dl]_{\pi^\prime}\ar[dr]^{\rho^\prime}&\\
S\ar@{-->}[rr]^{\phi}\ar@{-->}@/_1.5pc/[rrrr]_{\psi}&&\mathbb{P}^1&&S^\prime}
\end{equation}
By construction,
$$
\chi=\chi^\prime\circ\psi,
$$
where $\chi^\prime\colon S^\prime\dasharrow S$ is a $G$-birational map that can be decomposed into a fewer number of $G$-Sarkisov links than the original $\chi$.

If the surfaces $S$ and $S^\prime$ in \eqref{quadric} are $G$-biregular, we may choose 
$$
\psi\in\mathrm{Bir}^G(S).
$$
A priori, this may not be the case, cf. \cite[Theorem 2.9]{Trepalin} for the arithmetic counterpart. However, the classification in Proposition~\ref{prop:P1P1gp} implies that this is actually the case:

\begin{prop}
\label{prop:quadrics}
The surfaces $S$ and $S^\prime$ in \eqref{quadric} are $G$-biregular.
\end{prop}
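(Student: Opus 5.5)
We have to show that the $G$-action on $S^\prime$ is conjugate in $\Aut(S^\prime)$ to the $G$-action on $S$; that is, the homomorphism $G\to\Aut(S^\prime)$ produced by the three links in \eqref{quadric}, composed with an identification $S^\prime\simeq S$, should differ from the original embedding by an element of $\Aut(S)$. The plan is to make the diagram completely explicit in torus coordinates. Both $\tilde S$ and $\tilde S^\prime$ are del Pezzo surfaces of degree $6$, and each contains the open torus $\bG_m^2$. The middle link (blow up $\tilde\Sigma$, then contract the strict transforms of the fibres) is an elementary transformation of the conic bundle $\pi$. Over the base $\bP^1$ with coordinate $t=x/y$ (or $t=xy$), this transformation acts fibrewise on the fibres of $\phi$. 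These fibres are the conics $x=ty$, each isomorphic to $\bP^1$ with coordinate $y$. After contracting $(0,0),(\infty,\infty)$, each fibre carries the two marked points $0,\infty$ coming from the boundary.

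First, I identify the orbit $\tilde\Sigma$. Conditions ($\diamondsuit$) and ($\heartsuit$) force $\tilde\Sigma$ to lie over a $G$-orbit $B\subset\bP^1$ with one point in each fibre, and away from $t\in\{0,\infty\}$. Then I write the composite $\psi$ as an explicit birational map, roughly of the form $(x,y)\mapsto (x\,f(t),\,y\,f(t))$, possibly after composing with a factor swap. Here $f$ is a rational function in $t$ whose zeros and poles are located at $B$. This map is equivariant for the group $G^\prime=\psi G\psi^{-1}$, which again preserves the torus. The contracted orbit on $\tilde S^\prime$ is again of the form $(0,0)\cup(\infty,\infty)$ or $(0,\infty)\cup(\infty,0)$. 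Consequently $G^\prime\subset\Aut(S^\prime)$ has $\nu(G^\prime)=\fC_2^2$, $S^{\prime G}=\varnothing$, and $\rk\Pic(S^\prime)^{G}=1$. Therefore $G^\prime$ appears in the list of Proposition~\ref{prop:P1P1gp}.

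The key step is then to show that $G^\prime$ lies in the same case of Proposition~\ref{prop:P1P1gp} as $G$, with the same parameters $n$ and $r$, and with the isomorphism $G\to G^\prime$ induced by $\psi$ matching the standard generators up to an automorphism of $S$. For this I compare the invariants that $\psi$ preserves. The translation subgroup $G_T$ acts on the fibres of $\phi$; since $\psi$ only rescales along the fibres, it conjugates $G_T$ to a group of translations $G^\prime_T\simeq G_T$ with the same kernel on the base. The images of $\sigma$ and $\tau$ acting on the base $\bP^1$ are unchanged. The remaining data are the constants $(a,b)$ of $\tau$ modulo $G_T$, which by the proof of Proposition~\ref{prop:P1P1gp} take only finitely many values. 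These values are determined by whether specific elements of $G$ (for instance $\tau$, $\sigma\tau$ and their products with $G_T$) fix points or curves on $S$.

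The main obstacle is this last comparison, because an elementary transformation can a priori change the fixed-point structure of individual elements. For example, it could move $(a,b)=(1,1)$ to $(\zeta_{2n},\zeta_{2n})$, which would exchange Cases (1) and (2), or Cases (3) and (4). I would handle this by checking directly which elements of $G$ fix points in the fibres over the fixed points of the base action. Since $\psi$ is an isomorphism on the generic fibre and preserves the marked sections $0$ and $\infty$, an element acting on a fixed fibre with fixed points $0$ and $\infty$ keeps this property. If a distinguishing invariant, such as the Burnside-type symbol of curves fixed by elements, turns out to be preserved, then the cases coincide. Once the case and the parameters agree, a diagonal torus element together with the conjugations used in the proof of Proposition~\ref{prop:P1P1gp} gives the required $G$-isomorphism $S\simeq S^\prime$.
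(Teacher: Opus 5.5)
There is a genuine gap. Moreover, the statement you set out to prove is stronger than the proposition and is false in one of the cases where the link \eqref{quadric} exists. You ask that the homomorphism $G\to\Aut(S^\prime)\simeq\Aut(S)$ differ from the original embedding by conjugation by an element of $\Aut(S)$. You plan to get this by showing that $\psi$ is a fibrewise rescaling that preserves the sections $0,\infty$, and hence preserves the fixed-point data of individual elements. Both claims fail in Case (3) of Proposition~\ref{prop:P1P1gp} with $n$ odd and $r=n$.

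In that case $G=\langle c,\sigma,\tau\rangle$ with $c=\mathrm{diag}(\zeta_n,\zeta_n)$, $\sigma=(\frac1x,\frac1y)$ and $\tau=(y,x)$. Only $\phi=xy$ gives a link, with $G_\phi=\langle\tau\rangle$. Then ($\diamondsuit$) forces $\tilde\Sigma$ onto the strict transform of the diagonal, not onto $E_1\cup E_2$. Put $t=xy$, $m=\frac{n-1}{2}$ and $R_t(z)=\frac{z-t^{m+1}}{1-t^mz}$. The map \eqref{map1} is then $(x,y)\mapsto(R_t(y),R_t(x))$. It preserves $t$, but on each fibre it is a M\"obius transformation sending the marked point $x=0$ to $u=-t^{-m}$, so it moves the sections $E_1,E_2$.

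A direct check gives $\psi c\psi^{-1}=c$ and $\psi\tau\psi^{-1}=\tau$, but $\psi\sigma\psi^{-1}=\sigma\tau=(\frac1y,\frac1x)$. So an involution with four isolated fixed points is carried to one fixing the curve $xy=1$, and its $\nu$-type changes as well. Hence the induced automorphism of $G$ is not in $\bar\beta(\Aut^G(S))$; this is Lemma~\ref{lemm:action-3}, and Lemma~\ref{lemm:D6} for $n=3$. No identification $S^\prime\simeq S$ makes the two actions conjugate in $\Aut(S)$. What the proposition asserts, and what is used afterwards, is only that $\psi G\psi^{-1}$ is conjugate to $G$ as a subgroup, i.e.\ that $\psi$ can be chosen in $\Bir^G(S)$.

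Beyond this, your decisive step is left conditional (``if a distinguishing invariant turns out to be preserved''), and the example shows that invariants built from fixed loci of individual elements are not preserved. The paper closes the argument by explicit computation rather than by invariants.
\begin{itemize}
\item By Remark~\ref{remark:link-exists}, the link exists only when $G_\phi$ is cyclic, and $\tilde\Sigma$ must lie in the fixed locus of $G_\phi$.
\item This leaves Case (3) with $n$ odd and $r=n$ (with $\tilde\Sigma$ on the diagonal), and Cases (4)--(6) with $r$ even (with $\tilde\Sigma\subset E_1\cup E_2$).
\item In each case, the classes of $\tilde H_1^\prime,\tilde H_2^\prime$ are determined from intersection numbers with the components of the two singular fibres of $\pi$.
\item The pencils through $\tilde\Sigma$ then give explicit formulas such as \eqref{map1}, \eqref{map2}, \eqref{map4a}, \eqref{map4b} and their analogues in Lemmas~\ref{lemm:action-5} and \ref{lemm:action-6}.
\item One verifies directly that these maps normalize $G$.
\end{itemize}
Your rescaling ansatz is correct when $\tilde\Sigma\subset E_1\cup E_2$; for example, \eqref{map4a} is $(x,y)\mapsto(xf(t),y/f(t))$ with $t=xy$. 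In those cases the way to finish is to compute $\psi g\psi^{-1}$ for the generators $g$ of $G$ and check that they lie in $G$, which is exactly the paper's computation.
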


The proof of this result follows from a case by case analysis: we find explicit formulas for $\psi$ in each case and verify that $\psi\in\mathrm{Bir}^G(S)$. This is done in Lemmas~\ref{lemm:action-1-2}, \ref{lemm:action-3}, \ref{lemm:action-4}, \ref{lemm:action-5}, \ref{lemm:action-6} below. 

Note that the construction of $\psi$ may fail for some groups $G$ listed in Proposition~\ref{prop:P1P1gp}. Namely, $\psi$ in \eqref{quadric} is determined by the choice of 
$$
\Sigma=(0,0)\cup(\infty,\infty)\quad\ \text{or}\quad\ \Sigma=(0,\infty)\cup(\infty,0),
$$
and the choice of a $G$-orbit $\tilde{\Sigma}\subset\tilde{S}$ satisfying both ($\diamondsuit$) and ($\heartsuit$). It can happen that $\tilde{S}$ simply does not contain $G$-orbits that satisfy these conditions. 

\begin{rema}
\label{remark:link-exists}
It is easy to check whether or not $\tilde{S}$ contains a $G$-orbit  that satisfies ($\diamondsuit$) and ($\heartsuit$). Since $\phi$ in \eqref{quadric} is \mbox{$G$-equivariant}, it induces an exact sequence of groups:
$$
1\to G_\phi\to G\to G_{\mathbb{P}^1}\to 1,
$$
where $G_{\phi}$ is the kernel of the $G$-action on $\mathbb{P}^1$ and $G_{\mathbb{P}^1}$ is the image of $G$ in $\mathrm{Aut}(\mathbb{P}^1)$. Conditions ($\diamondsuit$) and ($\heartsuit$) imply that $G_{\phi}$ is cyclic. Conversely, if $G_{\phi}$ is cyclic, the $G$-orbit of a general point in $\tilde{S}$ satisfies ($\diamondsuit$) and ($\heartsuit$). 
\end{rema}

Using this observation, we obtain:

\begin{lemm}
\label{lemm:action-1-2}
If $G$ is the group in Cases (1) or (2) in Proposition~\ref{prop:P1P1gp}, then $\Bir^G(S)$ is generated by Bertini, Geiser and Yasinsky involutions, and
$$
\bar{\beta}(\Bir^G(S))=\bar{\beta}(\Aut^G(S)).
$$
Moreover, if $|G|>28$ then $\Bir^G(S)=\Aut^G(S)$.
\end{lemm}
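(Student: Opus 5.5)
The plan is to show that in Cases (1) and (2) of Proposition~\ref{prop:P1P1gp} no link of type \eqref{quadric} exists. Once that is established, the decomposition of $\chi\in\Bir^G(S)$ described above leaves only Geiser, Bertini and Yasinsky involutions together with elements of $\Aut^G(S)$. Then \cite[Remark~2.3]{CTZ-linear} and Lemma~\ref{lemma:P1-P1-4-points} put these involutions in the kernel of $\bar\beta$, which gives $\bar{\beta}(\Bir^G(S))=\bar{\beta}(\Aut^G(S))$. By Remark~\ref{remark:link-exists}, it is enough to check, for each of the two length-$2$ orbits $\Sigma$, that the kernel $G_\phi$ of the induced $G$-action on $\mathbb{P}^1$ (via $\phi=x/y$ or $\phi=xy$) is not cyclic.

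The computation is explicit. In both cases $G_T=\langle \mathrm{diag}(\zeta_n,1),\mathrm{diag}(1,\zeta_n)\rangle\simeq\fC_n^2$.

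\emph{Map $x/y$.} The kernel contains the diagonal translations $\mathrm{diag}(\zeta_n^k,\zeta_n^k)$. It also contains the swap twisted appropriately: in Case (1) this is $(x,y)\mapsto(y,x)$ composed with $\sigma$, since $(1/y)/(1/x)=x/y$; in Case (2) one composes $\zeta_{2n}$-twisted elements similarly. So $G_\phi$ contains $\fC_n$ together with an involution inverting it, i.e.\ a dihedral group $\fD_n$. For $n\ge 2$ this is non-cyclic ($\fD_2=\fC_2^2$).

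\emph{Map $xy$.} The kernel contains $\mathrm{diag}(\zeta_n^k,\zeta_n^{-k})$ and the swap $(x,y)\mapsto(y,x)$ in Case (1); in Case (2) it contains $(\zeta_{2n}y,\zeta_{2n}x)$ composed with $\mathrm{diag}(\zeta_n^{-1},1)$, which preserves $xy$. Again this gives a dihedral, hence non-cyclic, kernel.

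The main care point is Case (2) with small $n$, where one must verify that the twisted swap really lies in the kernel. One also has to exclude the groups already treated in Lemmas~\ref{lemm:C23} and~\ref{lemm:D6}, which do not occur here anyway.

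For the last claim, when $|G|=4n^2>28$, i.e.\ $n\ge3$, I would show that $S$ has no $G$-orbits of length $4$, $6$ or $7$ in general position. Then the Geiser, Bertini and Yasinsky involutions are absent, and $\Bir^G(S)=\Aut^G(S)$. The argument is by orbit-length counting.

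\begin{itemize}
\item Orbits in the torus $S^\circ$ have length divisible by $|G_T|=n^2$, and in fact have length at least $n^2$. For $n\ge3$ this is at least $9$.
\item Orbits on the boundary lie on $L_{11}\cup L_{12}\cup L_{21}\cup L_{22}$, where $G$ permutes the four lines. The points off the corners form orbits of length at least $2n\cdot 2$ (using the $\fC_n$ acting on each line); for $n\ge 3$ this is at least $12$.
\item The corner points form the two orbits of length $2$.
\end{itemize}

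Only small $n$ could yield orbits of length $4$, $6$ or $7$ (for example $n=2$, $|G|=16$), which is consistent with the threshold $28$. The step I expect to need the most attention is this boundary orbit count, since one must use the exact stabilizers on the lines, and correspondingly the precise form of the threshold $|G|>28$.
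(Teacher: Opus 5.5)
Your proposal is correct and follows the paper's proof. You rule out the link \eqref{quadric} via Remark~\ref{remark:link-exists} by exhibiting the same non-cyclic (dihedral) subgroups of $G_\phi$ for $\phi=x/y$ and $\phi=xy$, and you then use \cite[Remark~2.3]{CTZ-linear} and Lemma~\ref{lemma:P1-P1-4-points} to place the remaining involutions in $\ker\bar\beta$. For $|G|=4n^2>28$, your explicit orbit count supplies the detail behind the paper's unproved assertion that $S$ has no $G$-orbits of length $4$, $6$ or $7$: torus orbits have length divisible by $n^2$, off-corner boundary orbits have length $4n$, and the only other orbits are the two corner orbits of length $2$.
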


\begin{proof}
As explained, every $\chi\in\Bir^G(S)$ can be decomposed into a composition of $G$-Sarkisov links. Every $G$-Sarkisov link that starts at $S$ gives either a Bertini involution, or a Geiser involution, or a Yasinsky involution, or a $G$-birational map described in \eqref{quadric}. On the other hand, by Remark~\ref{remark:link-exists}, the map in \eqref{quadric} does not exist in our case, because $G_{\phi}$ in Remark~\ref{remark:link-exists} is not cyclic. Indeed, if $\phi$ is 
$$
(x,y)\mapsto \frac{x}{y},
$$
then, in Case (1), $G_\phi$ contains 
$$
(x,y)\mapsto(\zeta_nx,\zeta_ny),\quad (x,y)\mapsto (\frac1y,\frac1x),
$$ 
 and in Case (2), $G_\phi$ contains 
$$
(x,y)\mapsto(\zeta_nx,\zeta_ny),\quad (x,y)\mapsto (\frac{\zeta_{2n}}{y},\frac{\zeta_{2n}}{x}).
$$ 
Similarly, if $\phi$ is 
$$
(x,y)\mapsto xy,
$$
then, in Case (1), $G_\phi$ contains 
$$
(x,y)\mapsto(\zeta_nx,\zeta_n^{-1}y),\quad (x,y)\mapsto (y,x),
$$ 
 and in Case (2), $G_\phi$ contains 
$$
(x,y)\mapsto(\zeta_nx,\zeta_n^{-1}y),\quad (x,y)\mapsto ({\zeta_{2n}^{-1}}y,\zeta_{2n}x).
$$ 
We conclude that $\Bir^G(S)$ is generated by $\Aut^G(S)$, Bertini involutions, Geiser involutions, and Yasinsky involutions. Moreover, by \cite[Remark~2.3]{CTZ-linear} and  Lemma~\ref{lemma:P1-P1-4-points}, these birational involutions lie in the kernel of $\bar{\beta}$, so 
$$
\bar{\beta}(\Bir^G(S))=\bar{\beta}(\Aut^G(S)),
$$
as claimed.

If $|G|>28$, then $S$ does not contain $G$-orbits of length $4$, $6$ and $7$, and the classification of $G$-Sarkisov links implies that $\Bir^G(S)$ does not contain Bertini, Geiser or Yasinsky involutions, so 
$$
\Bir^G(S)=\Aut^G(S).
$$
\end{proof}

We proceed to groups described in Case (3) in Proposition~\ref{prop:P1P1gp}.

\begin{lemm}
\label{lemm:action-3}
Suppose that $G$ is in Case (3) in Proposition~\ref{prop:P1P1gp}. 
\begin{itemize}
\item If $n$ is even or $r\ne n$, then $\Bir^G(S)$ is generated by $\Aut^G(S)$, Bertini involutions, Geiser involutions, and Yasinsky involutions, and 
$$
\bar{\beta}(\Bir^G(S))=\bar{\beta}(\Aut^G(S)).
$$
\item If $n$ is odd and $r=n$, then $\Bir^G(S)$ is generated by  $\Aut^G(S)$, Bertini involutions, Geiser involutions, Yasinsky involutions, and the following birational transformations:
\begin{align}
\label{map1}
        (x_1,x_2)\times(y_1,y_2)\mapsto (r_1,r_2 )\times(t_1,t_2),
     \end{align}
     where 
     \begin{align*}
         r_1&=x_2^{\frac{n+1}{2}}y_1y_2^{\frac{n-1}{2}}-x_1^{\frac{n+1}{2}}y_1^{\frac{n+1}{2}},\qquad
         r_2=x_2^{\frac{n+1}{2}}y_2^{\frac{n+1}{2}}-x_1^{\frac{n-1}{2}}x_2y_1^{\frac{n+1}{2}},\\
         t_1&=x_1x_2^{\frac{n-1}{2}}y_2^{\frac{n+1}{2}}-x_1^{\frac{n+1}{2}}y_1^{\frac{n+1}{2}},\qquad
         t_2=x_2^{\frac{n+1}{2}}y_2^{\frac{n+1}{2}}-x_1^{\frac{n+1}{2}}y_1^{\frac{n-1}{2}}y_2,
     \end{align*}
and 
\begin{align}
\label{map2}
(x_1,x_2)\times(y_1,y_2)\mapsto   
        (f_1,f_2)\times(g_1,g_2),
\end{align}
where
\begin{align*}
    f_1&=-s^nx_1^{n+1}y_1^n + (s^{2n} + 1)x_1^{\frac{n+1}{2}}x_2^{\frac{n+1}{2}}y_1^{\frac{n+1}{2}}y_2^{\frac{n-1}{2}} - s^nx_1x_2^ny_2^n,\\
    f_2&=(s^{2n} + 1)x_1^{\frac{n+1}{2}}x_2^{\frac{n+1}2}y_1^{\frac{n-1}{2}}y_2^{\frac{n+1}{2}} - s^nx_2^{n+1}y_2^n - s^nx_1^nx_2y_1^n,\\
    g_1&=-s^ny_1^{n+1}x_1^n + (s^{2n} + 1)y_1^{\frac{n+1}{2}}y_2^{\frac{n+1}{2}}x_1^{\frac{n+1}{2}}x_2^{\frac{n-1}{2}} - s^ny_1y_2^nx_2^n,\\
    g_2&=(s^{2n} + 1)y_1^{\frac{n+1}{2}}y_2^{\frac{n+1}2}x_1^{\frac{n-1}{2}}x_2^{\frac{n+1}{2}} - s^ny_2^{n+1}x_2^n - s^ny_1^ny_2x_1^n,
\end{align*}
where $s\in k^\times$ and $s^{2n}\ne-1$.
In this case, the image of the map~\eqref{map2} under $\beta$ is trivial, and the image of the map~\eqref{map1} under $\bar{\beta}$ is a nontrivial element in $\Out(G)$ which does not lie in $\bar\beta(\Aut^G(S))$.
\end{itemize}
\end{lemm}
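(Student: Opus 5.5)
Following the reduction explained before Proposition~\ref{prop:quadrics}, I would start by writing an arbitrary $\chi\in\Bir^G(S)$ as a composition of $G$-Sarkisov links. After composing with Bertini, Geiser and Yasinsky involutions, which lie in $\ker\bar\beta$ by \cite[Remark~2.3]{CTZ-linear} and Lemma~\ref{lemma:P1-P1-4-points}, only the three-link maps $\psi$ of \eqref{quadric} are left. Each of these is determined by a choice of $\Sigma$, either $(0,0)\cup(\infty,\infty)$ or $(0,\infty)\cup(\infty,0)$, together with a $G$-orbit $\tilde\Sigma$ satisfying ($\diamondsuit$) and ($\heartsuit$). By Remark~\ref{remark:link-exists}, such an orbit exists exactly when $G_\phi$ is cyclic.

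The first step is to compute $G_\phi$ for Case (3). Take $\phi=x/y$. The kernel contains $\mathrm{diag}(\zeta_n,\zeta_n)$ and $(x,y)\mapsto(1/y,1/x)$, the latter being the composition of $\sigma$ and the swap. It also contains $\mathrm{diag}(\zeta_n^{r},1)^a\mathrm{diag}(1,\zeta_n^r)^b$ whenever $\zeta_n^{r(a-b)}=1$.
\begin{itemize}
\item If $n$ is even, $G_\phi$ contains $\mathrm{diag}(-1,-1)$ together with an involution not commuting with it trivially in a cyclic way. So $G_\phi$ contains $\fC_2^2$ or a dihedral group and is not cyclic.
\item If $r\ne n$, the extra diagonal elements $\mathrm{diag}(\zeta_n^{r},\zeta_n^{r})$-type together with the torus-inverting involution again make $G_\phi$ dihedral of order $>2$.
\end{itemize}
The map $\phi=xy$ is handled symmetrically, using $\mathrm{diag}(\zeta_n,\zeta_n^{-1})$ combined with the swap and $\sigma$. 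In these cases no link \eqref{quadric} exists, and the first bullet follows exactly as in Lemma~\ref{lemm:action-1-2}.

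If $n$ is odd and $r=n$, then $G\simeq \fC_2\times\fD_n$ with $G_T=\langle\mathrm{diag}(\zeta_n,\zeta_n)\rangle$. Here $G_\phi=\langle \mathrm{diag}(\zeta_n,\zeta_n)\rangle\times$(nothing else for one $\phi$, or just an involution for the other) is cyclic, so links exist. The second step is to construct $\psi$ explicitly.
\begin{itemize}
\item Blow up $\Sigma$ to get the conic bundle $\tilde S\to\bP^1$.
\item Take the orbit $\tilde\Sigma$ of a general point; its image in $\bP^1$ is an orbit of $G_{\bP^1}\simeq\fD_n$ or $\fC_2^2$-type quotient.
\item Perform the elementary transformations and contract back.
\end{itemize}
Orbits of general points form a one-parameter family, parametrised by $s$, and the special orbit gives the degenerate parameter. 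I expect the generic orbit to yield \eqref{map2}, with the condition $s^{2n}\ne -1$ excluding orbits that meet singular fibres, and the special (short) orbit of $G_{\bP^1}$ to yield \eqref{map1}. Verifying the formulas means checking that each map is birational, that it contracts exactly the expected curves (the fibres through $\tilde\Sigma$ and the strict transforms of the $(1,1)$-curves through $\Sigma$), and that it is $G$-equivariant up to an automorphism of $G$. This shows $S'\simeq S$ $G$-equivariantly, giving Proposition~\ref{prop:quadrics} in this case. Every $\psi$ then equals one of \eqref{map1}, \eqref{map2} up to composition with elements of $\Aut^G(S)$, by uniqueness of the link given $\tilde\Sigma$ and the transitivity of $\Aut^G(S)$ on the relevant parameters.

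Finally, I would compute $\bar\beta$ by conjugating the generators $\mathrm{diag}(\zeta_n,\zeta_n)$, $\sigma$, $\tau$ by each map.
\begin{itemize}
\item For \eqref{map2} I expect each generator to commute with the map, so its image is trivial.
\item For \eqref{map1} I expect $\sigma\mapsto\sigma\cdot\mathrm{diag}(-1,-1)$-type twisting, as in Lemma~\ref{lemm:D6}, giving the outer automorphism that multiplies the reflections by the central involution.
\end{itemize}
This outer automorphism is not realised in $\Aut^G(S)$, because $\Aut^G(S)$ is generated by $G$ and a few explicit diagonal and swap maps, all of which preserve the conjugacy classes of reflections fixing curves. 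Preserving those fixed-curve incompressible symbols rules them out.

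The main obstacle is the explicit construction and verification of \eqref{map1} and \eqref{map2}, together with the claim that they exhaust all possible $\psi$ up to $\Aut^G(S)$. I would handle this by computer algebra for the equivariance checks and by an orbit count on $\bP^1$ for exhaustiveness.
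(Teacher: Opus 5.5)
There is a genuine gap in the case $n$ odd, $r=n$, and it starts with the computation of $G_\phi$.
\begin{itemize}
\item For $\phi=x/y$ you list $\mathrm{diag}(\zeta_n,\zeta_n)$ and $(x,y)\mapsto(1/y,1/x)$ in the kernel yourself. The latter inverts the former, so $G_\phi\supseteq\fD_n$ is never cyclic. Hence $\Sigma=(0,0)\cup(\infty,\infty)$ never starts a link \eqref{quadric} in Case (3).
\item For $\phi=xy$, the element $\mathrm{diag}(\zeta_n,\zeta_n)$ acts on the base by $t\mapsto\zeta_n^2t$, which is nontrivial for odd $n$. So $G_\phi=\langle\tau\rangle$ with $\tau=(y,x)$.
\end{itemize}
Your description ``$\langle\mathrm{diag}(\zeta_n,\zeta_n)\rangle\times$(nothing, or an involution)'' is therefore wrong for both choices, and only $\Sigma=(0,\infty)\cup(\infty,0)$ contributes. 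Your first bullet is right in its conclusion, but $\sigma$ is not in the kernel of $xy$. What is there is $\langle\mathrm{diag}(-1,-1),\tau\rangle$ for $n$ even, and $\langle\mathrm{diag}(\zeta_n^r,\zeta_n^{-r}),\tau\rangle\simeq\fD_{n/r}$ for $r\ne n$.

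The more serious problem is that you take $\tilde\Sigma$ to be the orbit of a general point of $\tilde S$. This violates ($\diamondsuit$). Since $\tau\in G_\phi$ acts nontrivially on every fibre $xy=t$, the points $p$ and $\tau(p)$ are distinct points of $\tilde\Sigma$ in the same fibre.

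The missing idea is that ($\diamondsuit$) forces $\tilde\Sigma$ into the fixed locus of $G_\phi$. That locus is the strict transform $\tilde C$ of the diagonal $x=y$, a $2$-section of $\pi$. This constraint is what makes the admissible orbits a one-parameter family, namely the $G$-orbits of $(s,s)$ with $s\in k^\times$. It is exactly what your exhaustiveness claim needs, because an orbit count on $\bP^1$ alone does not locate $\tilde\Sigma$ inside the fibres.

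Once $\tilde\Sigma\subset\tilde C$ is established, the cases are as follows.
\begin{itemize}
\item If $s^{2n}=1$, you get the two orbits of length $n$, through $(1,1)$ and $(-1,-1)$. They are swapped by $(-x,-y)\in\Aut^G(S)$ and give \eqref{map1}.
\item If $s^{2n}\neq 1$, the orbits have length $2n$ and give \eqref{map2}.
\item The exclusion $s^{2n}\neq-1$ comes from ($\diamondsuit$), not from singular fibres, which these orbits never meet. When $s^{2n}=-1$, the points $(s,s)$ and $(\zeta_n^k/s,\zeta_n^k/s)$ lie in the same fibre of $xy$.
\end{itemize}

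Finally, $\mathrm{diag}(-1,-1)\notin G$ for odd $n$. The twist induced by \eqref{map1} is $\sigma\mapsto\sigma\tau$, with $\tau$ the central involution.

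Your argument that this twist is not realized in $\Aut^G(S)$ is sound in its first half. The involution $\sigma$ has isolated fixed points while $\sigma\tau$ fixes the curve $xy=1$, and a biregular automorphism normalizing $G$ preserves this distinction. This is cleaner than the paper's appeal to generators of $\Aut^G(S)$. Drop the appeal to incompressible symbols, though: these fixed curves are rational, and \eqref{map1} itself exchanges the two classes.
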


\begin{proof}
The first assertion is proved using the same argument as in the proof of Lemma~\ref{lemm:action-1-2}. We consider the case when $n$ is odd and $r=n$. It suffices  to show that $\psi$ in \eqref{quadric} is a $G$-birational map given by \eqref{map1} or \eqref{map2}. If $\Sigma=(0,0)\cup(\infty,\infty)$, $\tilde{S}$ does not have $G$-orbits that satisfy both ($\diamondsuit$) and ($\heartsuit$), and we see that $\Sigma=(0,\infty)\cup(\infty,0)$,
and $\phi$ in \eqref{quadric} is given by $(x,y)\mapsto xy$. Similarly, 
$$
\tilde{\Sigma}\subset\tilde{C}\subset\tilde{S},
$$ 
where $\tilde{C}$ is the strict transform of $C\subset S$ given by $x=y$. The curve $C$ is pointwise fixed by the involution $(x,y)\mapsto (y,x)$ in $G$. Note that $\tilde{C}$ is a $2$-section of the conic bundle $\pi$. It follows that $\Sigma$ is the $G$-orbit of the strict transform of the point $(s,s)\in\mathbb{P}^1\times\mathbb{P}^1$ with $s\in k^\times$. We have two cases: 
\begin{enumerate}
\item $s^{2n}=1$ and $\Sigma$ has length $n$,
\item $s^{2n}\ne 1$ and $\Sigma$ has length $2n$. 
\end{enumerate}
In the second case, we also have $s^{2n}+1\ne 0$, since otherwise $\tilde{\Sigma}$ does not satisfy ($\diamondsuit$).
In the first case, \eqref{map1} either gives $\psi$ in \eqref{quadric} or $\psi$ conjugated by the involution $(x,y)\mapsto (-x,-y)$, which is contained in $\mathrm{Aut}^G(S)$. In the second case, $\psi$ is given by  \eqref{map2}.

The formulas \eqref{map1} and  \eqref{map2} can be found as follows.  Let $E_1$ and $E_2$ be $\rho$-exceptional curves, let $L_1$ and $L_2$ be general fibers of the projections to the first and the second factor of $S=\mathbb{P}^1\times\mathbb{P}^1$, respectively, let $H_1$ and $H_2$ be the strict transforms on $\tilde{S}$ of the curves $L_1$ and $L_2$, let $E_1^\prime$ and $E_2^\prime$ be the $\rho^\prime$-exceptional curves, let $L_1^\prime$ and $L_2^\prime$ be general fibers of the projections to the first and the second factor of $S^\prime=\mathbb{P}^1\times\mathbb{P}^1$, respectively, let $H_1^\prime$ and $H_2^\prime$ be the strict transforms on $\tilde{S}^\prime$ of the curves $L_1$ and $L_2$, and let $\tilde{E}_1^\prime$, $\tilde{E}_2^\prime$, $\tilde{L}_1^\prime$, $\tilde{L}_2^\prime$ be the strict transforms on $\tilde{S}$ of the curves $E_1^\prime$, $E_2^\prime$, $L_1^\prime$, $L_2^\prime$, respectively. Then 
$$
\tilde{E}_1^\prime\sim aH_1+bH_2-m_1E_1-m_2E_2
$$
for some non-negative integers $a$, $b$, $m_1$, $m_2$. Moreover, since $\tilde{E}_1^\prime$ and $\tilde{E}_2^\prime$ are swapped by $G$-action, we see that
$$
\tilde{E}_2^\prime\sim bH_1+aH_2-m_2E_1-m_1E_2.
$$
Furthermore, by construction, 
$$
a^2+b^2-2m_1m_2=\tilde{E}_1^\prime\cdot \tilde{E}_2^\prime=|\Sigma|
$$ 
and 
$$
2ab-m_1^2-m_2^2=\big(\tilde{E}_1^\prime\big)^2=\big(\tilde{E}_1^\prime\big)^2=-1+|\Sigma|.
$$
Since $\tilde{E}_1^\prime$ and $\tilde{E}_1^\prime$ are sections of $\pi$, we also have
$$
a+b-m_1-m_2=\tilde{E}_1^\prime\cdot(H_1+H_2-E_1-E_2)=\tilde{E}_2^\prime\cdot(H_1+H_2-E_1-E_2)=1.
$$
Recall that the conic bundle $\pi$ has two reducible fibers (over the points $0$ and $\infty$).
Let $\ell_{0}$, $\ell_{0}^\prime$, $\ell_{\infty}$, $\ell_{\infty}^\prime$ be their irreducible components such that $\pi(\ell_{0})=\pi(\ell_{0}^\prime)=0$,  $\pi(\ell_{\infty})=\pi(\ell_{\infty}^\prime)=\infty$. Then we may assume that $\ell_{0}\sim H_1-E_1$, $\ell_{0}^\prime\sim H_2-E_2$, $\ell_{\infty}\sim H_2-E_1$, $\ell_{\infty}^\prime\sim H_1-E_2$, so $\ell_{0}\cap E_1\ne\varnothing$, $\ell_{0}^\prime\cap E_2\ne\varnothing$, $\ell_{\infty}\cap E_1\ne\varnothing$, $\ell_{\infty}^\prime\cap E_2\ne\varnothing$. Without loss of generality, we may assume that 
$\tilde{E}_1^\prime\cap\ell_0\ne\varnothing$ and $\tilde{E}_2^\prime\cap\ell_0^\prime\ne\varnothing$, so 
$$
\tilde{E}_1^\prime\cdot\ell_0=1,\quad  \tilde{E}_1^\prime\cdot\ell_0^\prime=0, \quad \tilde{E}_2^\prime\cdot\ell_0=0, \quad \tilde{E}_2^\prime\cdot\ell_0^\prime=1.
$$
This gives us
$$
b-m_1=\tilde{E}_2^\prime\cdot\ell_0^\prime=\tilde{E}_1^\prime\cdot\ell_0=1
$$
and
$$
a-m_2=\tilde{E}_2^\prime\cdot\ell_0=\tilde{E}_1^\prime\cdot\ell_0^\prime=0.
$$
Hence, we have one of the following possibilities:
\begin{enumerate}
\item 
either $\tilde{E}_1^\prime\cdot\ell_\infty=1$,  $\tilde{E}_1^\prime\cdot\ell_\infty^\prime=0$, $\tilde{E}_2^\prime\cdot\ell_\infty=0$, $\tilde{E}_2^\prime\cdot\ell_\infty^\prime=1$, or
\item 
$\tilde{E}_1^\prime\cdot\ell_\infty=0$, $\tilde{E}_1^\prime\cdot\ell_\infty^\prime=1$, $\tilde{E}_2^\prime\cdot\ell_\infty=1$, $\tilde{E}_2^\prime\cdot\ell_\infty^\prime=0$.
\end{enumerate}
In the first case, 
$$
a-m_1=\tilde{E}_2^\prime\cdot\ell_\infty^\prime=\tilde{E}_1^\prime\cdot\ell_\infty=1
$$
and
$$
b-m_2=\tilde{E}_2^\prime\cdot\ell_\infty=\tilde{E}_1^\prime\cdot\ell_\infty^\prime=0,
$$
which gives $a=b=m_2=\frac{|\tilde{\Sigma}|}{2}$ and $m_1=\frac{|\tilde{\Sigma}|-2}{2}$, so, in particular, $|\tilde{\Sigma}|$ is even. This gives
$$
\tilde{H}_1^\prime\sim \frac{|\tilde{\Sigma}|+2}{2}H_1+\frac{|\tilde{\Sigma}|}{2}H_2-\frac{|\tilde{\Sigma}|}{2}(E_1+E_2),
$$
and 
$$
\tilde{H}_2^\prime\sim \frac{|\tilde{\Sigma}|}{2}H_1+\frac{|\tilde{\Sigma}|+2}{2}H_2-\frac{|\Sigma|}{2}(E_1+E_2),
$$
because 
$$
H_1+H_2-E_1-E_2\sim \tilde{H}_1^\prime+\tilde{H}_2^\prime-\tilde{E}_1^\prime-\tilde{E}_2^\prime.
$$
In the second case, we have
$$
a-m_1=\tilde{E}_2^\prime\cdot\ell_\infty^\prime=\tilde{E}_1^\prime\cdot\ell_\infty=0
$$
and
$$
b-m_2=\tilde{E}_2^\prime\cdot\ell_\infty=\tilde{E}_1^\prime\cdot\ell_\infty^\prime=1,
$$
which gives $b=m_1=m_2=\frac{|\Sigma|-1}{2}$ and $a=\frac{|\Sigma|+1}{2}$. In particular, $|\Sigma|$ is odd.
This gives
$$
\tilde{H}_1^\prime\sim \frac{|\tilde{\Sigma}|+1}{2}(H_1+H_2)-\frac{|\tilde{\Sigma}|+1}{2}E_1-\frac{|\tilde{\Sigma}|-1}{2}E_2,
$$
and 
$$
\tilde{H}_2^\prime\sim\frac{|\tilde{\Sigma}|+1}{2}(H_1+H_2)-\frac{|\tilde{\Sigma}|-1}{2}E_1-\frac{|\tilde{\Sigma}|+1}{2}E_2.
$$
Now, we can find pencils in the linear systems $|\tilde{H}_1^\prime|$ and $|\tilde{H}_2^\prime|$ that consist of curves passing through $\tilde{\Sigma}$. Choosing an appropriate basis in each of these pencils, we obtain an explicit equation for $\psi$ in \eqref{quadric}. In particular, if $s=1$, this map is given by \eqref{map1}. If $s=-1$, this map is given by \eqref{map1}, conjugated by the involution $(x,y)\mapsto (-x,-y)$. Finally, if $s\ne \pm 1$, then $\psi$ is given by \eqref{map2}. 

Note that when $n=r$ is odd, $G\simeq C_2\times\fD_n$. One can check that~\eqref{map2} commutes with the actions, and thus lies in the kernel of $\beta$. Let $\{a,b,c\}$ be a set of generators of $G$, where $a$ is in the center of $G$, $b$ is an order $2$ element in $\fD_n$, and $c$ is an order $n$ element in $\fD_n$.  The image of \eqref{map1} under $\bar\beta$ is the outer automorphism of $G$ given by 
$$
a\mapsto a,\quad b\mapsto ba,\quad c\mapsto c. 
$$
On the other hand, $\Aut^G(S)$ is generated by $G$ and translation elements in the torus, which cannot give rise to the above outer automorphism.
\end{proof}

We proceed to groups described in Case (4) in Proposition~\ref{prop:P1P1gp}.

\begin{lemm}
\label{lemm:action-4}
If $G$ is the group in Case (4) in Proposition~\ref{prop:P1P1gp} then
$$
\bar{\beta}(\Bir^G(S))=\bar{\beta}(\Aut^G(S)).
$$
Moreover, the following assertions hold:
\begin{itemize}
    \item If $r$ is odd, $\Bir^G(S)$ is generated by $\Aut^G(S)$, Bertini involutions, Geiser involutions, and Yasinsky involutions.

    \item If $r$ is even, $\Bir^G(S)$ is generated by $\Aut^G(S)$, Bertini involutions, Geiser involutions, Yasinsky involutions,  and the following birational transformations:
\begin{equation}
\label{map4a}    
(x_1,x_2)\times(y_1,y_2)\mapsto (r_1,r_2)\times(t_1,t_2)
\end{equation}
      where 
      \begin{align*}
          r_1&=x_1(x_1^{\frac n2}y_1^{\frac n2}+\zeta_4x_2^{\frac n2}y_2^{\frac n2}),\qquad
          r_2=x_2(x_2^{\frac n2}y_2^{\frac n2}+\zeta_4x_1^{\frac n2}y_1^{\frac n2}), \\
          t_1&=y_1(x_1^{\frac n2}y_1^{\frac n2}-\zeta_4x_2^{\frac n2}y_2^{\frac n2}),\qquad
          t_2=y_2(x_2^{\frac n2}y_2^{\frac n2}-\zeta_4x_1^{\frac n2}y_1^{\frac n2}),
      \end{align*}
      and 
\begin{equation}
\label{map4b}    
(x_1,x_2)\times(y_1,y_2)\mapsto (f_1,f_2)\times(g_1,g_2)
\end{equation}
      where 
      \begin{align*}
          f_1&=s^{\frac n2}y_2^nx_1x_2^n+(1-s^n)y_1^{\frac n2}y_2^{\frac n2}x_2^{\frac n2}x_1^{\frac{n}{2}+1}-s^{\frac n2}y_1^nx_1^{n+1},\\
      f_2&=s^{\frac n2}y_1^nx_2x_1^n+(1-s^n)y_2^{\frac n2}y_1^{\frac n2}x_1^{\frac n2}x_2^{\frac{n}{2}+1}-s^{\frac n2}y_2^nx_2^{n+1},\\
       g_1&=s^{\frac n2}x_2^ny_1y_2^n-(1-s^n)x_1^{\frac n2}x_2^{\frac n2}y_2^{\frac n2}y_1^{\frac{n}{2}+1}-s^{\frac n2}x_1^ny_1^{n+1},\\
      g_2&=s^{\frac n2}x_1^ny_2y_1^n-(1-s^n)x_2^{\frac n2}x_1^{\frac n2}y_1^{\frac n2}y_2^{\frac{n}{2}+1}-s^{\frac n2}x_2^ny_2^{n+1},
      \end{align*}
where $s\in k^\times$ such that $s^{2n}\ne1$. 
\end{itemize}
\end{lemm}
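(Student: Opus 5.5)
We follow the same strategy as in the proofs of Lemmas~\ref{lemm:action-1-2} and \ref{lemm:action-3}. Every $\chi\in\Bir^G(S)$ decomposes into $G$-Sarkisov links. Links starting at $S$ blow up orbits of length $4$, $6$, $7$ (these give Yasinsky, Geiser and Bertini involutions, which lie in $\ker\bar\beta$ by \cite[Remark~2.3]{CTZ-linear} and Lemma~\ref{lemma:P1-P1-4-points}), or one of the two length-$2$ orbits $\Sigma$. The latter produce the three-link maps $\psi$ of \eqref{quadric}. So it suffices to:
\begin{itemize}
\item determine for which $\Sigma$ the kernel $G_\phi$ is cyclic, using Remark~\ref{remark:link-exists};
\item list the admissible orbits $\tilde\Sigma$;
\item write $\psi$ explicitly and show that $\psi\in\Bir^G(S)$ with $\bar\beta(\psi)\in\bar\beta(\Aut^G(S))$.
\end{itemize}

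\textbf{Step 1: computing $G_\phi$.} In Case (4), $G_T$ is generated by $\mathrm{diag}(\zeta_n^r,1)$, $\mathrm{diag}(1,\zeta_n^r)$ and $\mathrm{diag}(\zeta_n,\zeta_n)$, together with $\sigma=(1/x,1/y)$ and $\tau=(\zeta_{2n}y,\zeta_{2n}x)$.

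For $\phi=x/y$, the kernel $G_\phi$ contains $\mathrm{diag}(\zeta_n,\zeta_n)$ and $\sigma\tau\colon(x,y)\mapsto(\zeta_{2n}^{-1}/y,\zeta_{2n}^{-1}/x)$. These generate a dihedral group of order $2n$, which is noncyclic for $n\ge 2$. So $\Sigma=(0,0)\cup(\infty,\infty)$ gives no link.

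For $\phi=xy$, the kernel contains $\mathrm{diag}(\zeta_n^r,\zeta_n^{-r})$ and $\tau$, since $\tau$ maps $xy$ to $\zeta_n xy$. We must decide which elements act trivially on $xy$. The element $\mathrm{diag}(\zeta_n^a\zeta_n^{rb},\zeta_n^a\zeta_n^{rc})$ multiplies $xy$ by $\zeta_n^{2a+r(b+c)}$. The element $\tau$ composed with such a translation multiplies $xy$ by $\zeta_n^{1+2a+r(b+c)}$.

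When $r$ is odd, some such composite $\tau'$ fixes $xy$. Then $G_\phi\supseteq\langle \mathrm{diag}(\zeta_n^r,\zeta_n^{-r}),\tau'\rangle$, which is dihedral, or $\fC_2^2$ when $r=n$. This group is noncyclic, so no link exists. This gives the first bullet, exactly as in Lemma~\ref{lemm:action-1-2}.

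When $r$ is even, $n$ is even and the exponent $1+2a+r(b+c)$ is always odd. Hence no element of the coset $\tau G_T$ lies in $G_\phi$, so $G_\phi\subset G_T$. An element of $G_T$ in the kernel satisfies $y=\lambda x^{-1}$ on the torus, so $G_\phi$ embeds in a single $\bG_m$ and is cyclic. By Remark~\ref{remark:link-exists}, links then exist.

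\textbf{Step 2: the case $r$ even.} Following the proof of Lemma~\ref{lemm:action-3}, we locate $\tilde\Sigma$ via the $2$-section of $\pi$ swept by a curve pointwise fixed by an involution in $G$. A natural choice is the fixed curve of $\sigma$-type involutions, or the orbit of a point $(s,\cdot)$ on such a curve. We then split into two cases:
\begin{itemize}
\item special $s$, with a short orbit, leading to \eqref{map4a};
\item general $s$ with $s^{2n}\ne1$, which is forced by ($\diamondsuit$), leading to \eqref{map4b}.
\end{itemize}
The classes $\tilde H_i'$ come from the same intersection-number computation with $a,b,m_1,m_2$ as in Lemma~\ref{lemm:action-3}, now with $|\tilde\Sigma|$ even. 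Computing the pencils through $\tilde\Sigma$ yields the formulas, and a direct substitution confirms that these maps normalize $G$.

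\textbf{Step 3: the images under $\bar\beta$.} This is where I expect the main difficulty. We compute how \eqref{map4a} and \eqref{map4b} conjugate each generator $\mathrm{diag}(\zeta_n^r,1)$, $\mathrm{diag}(\zeta_n,\zeta_n)$, $\sigma$, $\tau$. Map \eqref{map4b} should commute with $G$, as \eqref{map2} does. For \eqref{map4a}, the induced automorphism of $G$ must be matched with conjugation by an explicit element of $\Aut^G(S)$, presumably a torus element such as $\mathrm{diag}(\zeta_{4n},\zeta_{4n}^{-1})$ possibly composed with $(x,-y)$. Equivalently, \eqref{map4a} composed with that automorphism commutes with $G$. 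The explicit factor $\zeta_4$ in \eqref{map4a} suggests exactly such a twist.

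This differs from Case (3), where $n=r$ odd gives a genuinely new outer automorphism. Here the parity obstruction disappears, and that is why the equality $\bar\beta(\Bir^G(S))=\bar\beta(\Aut^G(S))$ holds.
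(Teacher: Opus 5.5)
Your Step 1 is correct and is the paper's reduction via Remark~\ref{remark:link-exists}. One small slip: in Case (4) $n$ is even, so $r$ odd forces $r<n$, and the Klein four-group arises for $r=n/2$, not $r=n$.

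\textbf{The gap is in Step 2.} You propose to locate $\tilde\Sigma$ on a $2$-section of $\pi$ swept out by a curve pointwise fixed by an involution of $G$, as in Lemma~\ref{lemm:action-3}. For $r$ even no such curve exists:
\begin{itemize}
\item torus elements of $G$ fix pointwise at most the boundary lines of $S$, which go into the singular fibers of $\pi$;
\item the elements $g\sigma$, $g\in G_T$, have only isolated fixed points;
\item no $g\tau$ is an involution, since $(g\tau)^2=\mathrm{diag}(\alpha\beta\zeta_n,\alpha\beta\zeta_n)$ and $\alpha\beta\zeta_n$ is an odd power of $\zeta_n$;
\item the involutions $g\sigma\tau$ fix pointwise a fiber $xy=c$ of $\phi$, not a $2$-section.
\end{itemize}
The missing idea is to apply ($\diamondsuit$) directly. $G_\phi$ preserves each fiber, so it must fix every point of $\tilde\Sigma$. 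Here $G_\phi$ is nontrivial: it contains $\mathrm{diag}(\zeta_n^r,\zeta_n^{-r})$, or $\mathrm{diag}(-1,-1)$ when $r=n$. An element $(x,y)\mapsto(tx,t^{-1}y)$ acts by scalars on the tangent planes at $(0,\infty)$ and $(\infty,0)$. So off the singular fibers its fixed locus on $\tilde S$ is $E_1\cup E_2$. Hence $\tilde\Sigma\subset E_1\cup E_2$, which is the key step of the paper's proof.

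This also breaks your claim that the classes $\tilde H_i'$ come from ``the same'' computation as in Lemma~\ref{lemm:action-3}. The images of $E_1,E_2$ in $\tilde S'$ are again the two $(-1)$-sections of $\pi'$, so $\tilde E_i'=E_i$. The ansatz $\tilde E_1'\sim aH_1+bH_2-m_1E_1-m_2E_2$ with $(\tilde E_1')^2=|\tilde\Sigma|-1$ therefore no longer applies. Instead, $\tilde H_1'$ is a section passing once through every point of $\tilde\Sigma$, with $\tilde H_1'\cdot E_i=|\tilde\Sigma|/2$ and $(\tilde H_1')^2=|\tilde\Sigma|$. This gives the paper's classes, and the pencils are cut out by multiplicity and tangency conditions at $(0,\infty)$ and $(\infty,0)$.

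Parametrize $\tilde\Sigma\cap E_1$ by the fiber $xy=s$. Condition ($\diamondsuit$) excludes only $s^n=1$, not $s^{2n}=1$ as you wrote. The case $s^n=-1$ gives the orbit of length $n$ and \eqref{map4a}; the $\zeta_4$ there is $s^{n/2}$. The case $s^{2n}\neq 1$ gives length $2n$ and \eqref{map4b}.

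Your Step 3 is left as a guess, and your candidate $\mathrm{diag}(\zeta_{4n},\zeta_{4n}^{-1})$, with or without $(x,-y)$, does not normalize $G$. It conjugates $\sigma$ to $\mathrm{diag}(\zeta_{2n},\zeta_{2n}^{-1})\circ\sigma\notin G$. The check is short once you set $w=(xy)^{n/2}$: this is $G_T$-invariant (because $r$ is even), inverted by $\sigma$, and negated by $\tau$. Then \eqref{map4b} centralizes $G$. The map \eqref{map4a} commutes with $G_T$ and $\sigma$ and conjugates $\tau$ to $(-x,-y)\circ\tau$. So its composite with $(x,y)\mapsto(x,-y)\in\Aut^G(S)$ centralizes $G$, which gives the equality of the images under $\bar\beta$.
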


\begin{proof}
Arguing as in the proof of Lemma~\ref{lemm:action-1-2}, we may assume that $r$ is even. It is enough to show that $\psi$ in \eqref{quadric} is given by \eqref{map4a} or \eqref{map4b}, which can be done arguing as in the proof of Lemma~\ref{lemm:action-3}, so we will use the notation introduced in the proof of that lemma. The only difference is that now $\tilde{\Sigma}$ is contained in the union of $\rho$-exceptional curves $E_1\cup E_2$, so we have $\tilde{E}_1^\prime=E_1$ and $\tilde{E}_2^\prime=E_2$. This gives
$$
\tilde{H}_1^\prime\sim \frac{|\tilde{\Sigma}|+2}{2}H_1+\frac{|\tilde{\Sigma}|}{2}H_2-\frac{|\tilde{\Sigma}|}{2}(E_1+E_2),
$$
and 
$$
\tilde{H}_2^\prime\sim \frac{|\tilde{\Sigma}|}{2}H_1+\frac{|\tilde{\Sigma}|+2}{2}H_2-\frac{|\tilde{\Sigma}|}{2}(E_1+E_2),
$$
Hence, as in the proof of Lemma~\ref{lemm:action-3}, to find explicit equation of the map $\psi$ in \eqref{quadric}, we have to find pencils in $|\tilde{H}_1^\prime|$ and $|\tilde{H}_2^\prime|$ consisting of curves that pass through $\tilde{\Sigma}$. 

The conic bundle $\pi$ gives us isomorphisms $E_1\simeq\mathbb{P}^1$ and $E_2\simeq\mathbb{P}^1$. Let us use these (explicit) isomorphisms to identify $E_1=\mathbb{P}^1$ and $E_2=\mathbb{P}^1$. Fix $s\in E_1\cap \tilde{\Sigma}$, and let $F_s$ be the fiber of $\pi$ that passes through $s$. Then $s\not\in\{0,\infty\}$, and $F_s$ is the strict transform of the curve in $S$ given by $xy=s$. Set $m=\frac{n}{2}$. Then $\tilde{\Sigma}\cap E_1$ consists of the points 
$$
s, \zeta_m s, \zeta_m^2 s, \cdots, \zeta_m^{m-1} s, \frac{\zeta_n}{s}, \zeta_m\frac{\zeta_n}{s}, \zeta_m^2\frac{\zeta_n}{s}, \cdots, \zeta_m^{m-1}\frac{\zeta_n}{s},
$$
and the intersection $\tilde{\Sigma}\cap E_2$ consists of the points 
$$
\zeta_n s, \zeta_m \zeta_n s, \zeta_m^2 \zeta_n s, \cdots, \zeta_m^{m-1} \zeta_n s, \frac{1}{s}, \zeta_m\frac{1}{s}, \zeta_m^2\frac{1}{s}, \cdots, \zeta_m^{m-1}\frac{1}{s}.
$$
Since $\tilde{\Sigma}$ satisfies ($\diamondsuit$), we have $E_2\cap F_s\not\in\tilde{\Sigma}$, which gives $s^{n}\ne 1$. 
Moreover, $|\tilde{\Sigma}|=2n$ unless $s^{2n}=1$. If $s^{2n}=1$, then $|\tilde{\Sigma}|=n$. As in the proof of Lemma~\ref{lemm:action-3}, we find pencils in the linear systems $|\tilde{H}_1^\prime|$ and $|\tilde{H}_2^\prime|$ that consist of curves passing through $\tilde{\Sigma}$, and choose appropriate basis in each of these pencils. This gives us explicit equations for the map $\psi$ in each case.
\end{proof}


\begin{lemm}
\label{lemm:action-5}
If $G$ is the group in Case (5) in Proposition~\ref{prop:P1P1gp} then
$$
\bar{\beta}(\Bir^G(S))=\bar{\beta}(\Aut^G(S)).
$$
Moreover, the following assertions hold:
\begin{itemize}

      \item If $r\equiv 0\pmod 4$ then $\Bir^G(S)$ is generated by $\Aut^G(S)$, Bertini involutions, Geiser involutions, Yasinsky involutions, the birational transformations
      \begin{equation}
\label{map5c}    
(x_1,x_2)\times(y_1,y_2)\mapsto (r_1,r_2)\times(t_1,t_2),
\end{equation}
      where 
      \begin{align*}
          r_1&=x_1(x_1^ky_2^k-\zeta_4y_1^kx_2^k),\qquad
          r_2=x_2(x_2^ky_1^k-\zeta_4y_2^kx_1^k), \\
          t_1&=y_1(y_1^kx_2^k+\zeta_4x_1^ky_2^k),\qquad
          t_2=y_2(-y_2^kx_1^k-\zeta_4x_2^ky_1^k),
      \end{align*} 
with $k=\frac nr$, and  the birational transformations
\begin{equation}
\label{map5d}    
(x_1,x_2)\times(y_1,y_2)\mapsto (f_1,f_2)\times(g_1,g_2)
\end{equation}
      where 
      \begin{align*}
          f_1&=x_1(s^kx_1^{2k}y_2^{2k}+(1-s^{2k})x_1^ky_1^kx_2^ky_2^k-s^ky_1^{2k}x_2^{2k}),\\
          f_2&=x_2(s^kx_2^{2k}y_1^{2k}+(1-s^{2k})x_2^ky_2^kx_1^ky_1^k-s^ky_2^{2k}x_1^{2k}),\\
           g_1&=y_1(s^ky_1^{2k}x_2^{2k}-(1-s^{2k})y_1^kx_1^ky_2^kx_2^k-s^kx_1^{2k}y_2^{2k}),\\
            g_2&=y_2(s^ky_2^{2k}x_1^{2k}-(1-s^{2k})y_2^kx_2^ky_1^kx_1^k-s^kx_2^{2k}y_1^{2k}),
      \end{align*} 
      with $k=\frac nr$, and $s\in k^\times$ such that $s^{4k}\ne1$.

\item If $r\equiv 2\pmod 4$ then $\Bir^G(S)$ is generated by $\Aut^G(S)$, Bertini involutions, Geiser involutions, Yasinsky involutions, the birational transformations~\eqref{map5c} and ~\eqref{map5d}, and 
 the birational transformations
\begin{equation}
\label{map5a}    
(x_1,x_2)\times(y_1,y_2)\mapsto (r_1,r_2)\times(t_1,t_2),
\end{equation}
      where 
      \begin{align*}
          r_1&=x_1(x_1^{\frac n2}y_1^{\frac n2}+\zeta_4x_2^{\frac n2}y_2^{\frac n2}),\qquad
          r_2=x_2(x_2^{\frac n2}y_2^{\frac n2}+\zeta_4x_1^{\frac n2}y_1^{\frac n2}), \\
          t_1&=y_1(x_1^{\frac n2}y_1^{\frac n2}-\zeta_4x_2^{\frac n2}y_2^{\frac n2}),\qquad
          t_2=y_2(x_2^{\frac n2}y_2^{\frac n2}-\zeta_4x_1^{\frac n2}y_1^{\frac n2}),
      \end{align*} 
      and
\begin{equation}
\label{map5b}    
(x_1,x_2)\times(y_1,y_2)\mapsto (f_1,f_2)\times(g_1,g_2),
\end{equation}
      where 
      \begin{align*}
          f_1&=s^{\frac n2}y_2^nx_1x_2^n+(1-s^n)y_1^{\frac n2}y_2^{\frac n2}x_2^{\frac n2}x_1^{\frac{n}{2}+1}-s^{\frac n2}y_1^nx_1^{n+1},\\
      f_2&=s^{\frac n2}y_1^nx_2x_1^n+(1-s^n)y_2^{\frac n2}y_1^{\frac n2}x_1^{\frac n2}x_2^{\frac{n}{2}+1}-s^{\frac n2}y_2^nx_2^{n+1},\\
       g_1&=s^{\frac n2}x_2^ny_1y_2^n-(1-s^n)x_1^{\frac n2}x_2^{\frac n2}y_2^{\frac n2}y_1^{\frac{n}{2}+1}-s^{\frac n2}x_1^ny_1^{n+1},\\
      g_2&=s^{\frac n2}x_1^ny_2y_1^n-(1-s^n)x_2^{\frac n2}x_1^{\frac n2}y_1^{\frac n2}y_2^{\frac{n}{2}+1}-s^{\frac n2}x_2^ny_2^{n+1},
      \end{align*}
for $s\in k^\times$ such that $s^{2n}\ne1$.
\end{itemize}

\end{lemm}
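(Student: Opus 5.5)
We follow the template of Lemmas~\ref{lemm:action-1-2}, \ref{lemm:action-3} and \ref{lemm:action-4}. Every $\chi\in\Bir^G(S)\setminus\Aut^G(S)$ decomposes into $G$-Sarkisov links. A link starting at $S$ is a Bertini, Geiser or Yasinsky involution, and all of these lie in $\ker\bar\beta$ by \cite[Remark~2.3]{CTZ-linear} and Lemma~\ref{lemma:P1-P1-4-points}. Otherwise it is the first link of a composite map $\psi$ as in \eqref{quadric}. So it suffices to do three things: determine, for $\Sigma=(0,0)\cup(\infty,\infty)$ and $\Sigma=(0,\infty)\cup(\infty,0)$, whether the kernel $G_\phi$ of Remark~\ref{remark:link-exists} is cyclic; find all admissible orbits $\tilde\Sigma$; and write down $\psi$ explicitly, checking that $S'\simeq S$ $G$-equivariantly and that $\bar\beta(\psi)\in\bar\beta(\Aut^G(S))$.

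\textbf{Step 1: the kernels $G_\phi$.} In Case (5) the generators are $\mathrm{diag}(\zeta_n^r,1)$, $\mathrm{diag}(1,\zeta_n^r)$, $\mathrm{diag}(\zeta_n,\zeta_n)$, $\sigma$ and $\tau\colon(x,y)\mapsto(y,\zeta_{2n}^r x)$. We compute the kernel of $G$ acting on the base coordinate $x/y$, respectively $xy$.
\begin{itemize}
\item For $\phi=xy$: the torus part of the kernel is $\{(\alpha,\alpha^{-1})\}\cap G_T$, which is cyclic of order $n/r=k$ (generated by $\mathrm{diag}(\zeta_n^r,\zeta_n^{-r})$). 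The elements $\sigma$ and $\tau$ act on $xy$ by $xy\mapsto 1/(xy)$ and $xy\mapsto\zeta_{2n}^r xy$, so none of them, and no product with torus elements, lies in the kernel unless twisted appropriately. Consequently $G_\phi$ is cyclic, of order $k$ or $2k$.
\item For $\phi=x/y$: the kernel contains $\mathrm{diag}(\zeta_n,\zeta_n)$. The element $\sigma\tau$ may also lie in it; we expect $G_\phi$ to be cyclic exactly when $r\equiv 2\pmod 4$, via a parity computation with $\zeta_{2n}^r$.
\end{itemize}
This dichotomy explains the extra family \eqref{map5a}, \eqref{map5b} appearing only for $r\equiv2\pmod 4$; its formulas coincide with those in Lemma~\ref{lemm:action-4}.

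\textbf{Step 2: admissible orbits.} For each admissible $\Sigma$, conditions ($\diamondsuit$) and ($\heartsuit$) force $\tilde\Sigma$ to lie on a $G_\phi$-invariant curve. Arguing as in the proof of Lemma~\ref{lemm:action-4}, $\tilde\Sigma$ lies on $E_1\cup E_2$, so $\tilde E_i'=E_i$ and the classes of $\tilde H_1'$ and $\tilde H_2'$ are those computed there, given by $|\tilde\Sigma|$. Parametrize $E_1\cap\tilde\Sigma$ by a point $s$ on the fiber $xy=s$ (respectively $x/y=s$). We then list the orbit explicitly, now with period $k$ instead of $n/2$. The orbit has full length unless $s^{4k}=1$ (respectively $s^{2n}=1$), and the special values give the maps \eqref{map5c}, \eqref{map5a}. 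Condition ($\diamondsuit$) excludes the values where two orbit points share a fiber.

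\textbf{Step 3: formulas and $\bar\beta$.} For each case we find pencils in $|\tilde H_1'|$ and $|\tilde H_2'|$ through $\tilde\Sigma$ and choose bases to obtain the displayed formulas. We then check directly that each map normalizes $G$, and compute the induced automorphism of $G$ on the generators. We expect conjugation by these maps either to commute with $G$ or to induce the same automorphism as an element of $\Aut^G(S)$, such as $(x,y)\mapsto(\zeta_4 x,y)$ composed with a torus translation. This shows $S'\simeq S$ and $\bar\beta(\Bir^G(S))=\bar\beta(\Aut^G(S))$.

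The main obstacle is this last check: verifying, for the generic family \eqref{map5d}, that conjugation induces an automorphism of $G$ realized by $\Aut^G(S)$. I would first test it on the generators $\sigma$ and $\tau$ symbolically for small $k$, then argue by homogeneity in $k$.
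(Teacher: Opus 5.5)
\textbf{The key case analysis is reversed, so the proposal does not prove the statement as written.} Your overall plan matches the paper's. You reduce to the links \eqref{quadric} starting at the two $G$-orbits of length $2$, decide whether each link exists by testing cyclicity of $G_\phi$ (Remark~\ref{remark:link-exists}), put $\tilde\Sigma$ on $E_1\cup E_2$, and read off $\psi$ from pencils in $|\tilde H_1'|,|\tilde H_2'|$. The problem is that Steps 1 and 2 attach the wrong family to each length-$2$ orbit.

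\textbf{Step 1.} Write elements of $G_T$ as $g=\mathrm{diag}(\zeta_n^a,\zeta_n^b)$ with $a\equiv b\pmod r$.

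For $\phi=xy$:
\begin{itemize}
\item The torus part of $G_\phi$ is $\{a+b\equiv 0\pmod n\}$. It is cyclic of order $2k$, generated by $\mathrm{diag}(\zeta_n^{r/2},\zeta_n^{-r/2})$, not of order $k$.
\item An element $g\circ\tau$ multiplies $xy$ by $\zeta_{2n}^{2(a+b)+r}$. Reducing $a+b\equiv -r/2\pmod n$ modulo $r$ gives $2b\equiv r/2\pmod r$, which is solvable exactly when $4\mid r$. For example, $(x,y)\mapsto(\zeta_n^{-r/4}y,\zeta_n^{r/4}x)$ lies in $G$ and fixes $xy$.
\item This involution inverts the torus part. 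So for $r\equiv 0\pmod 4$, $G_\phi$ is dihedral of order $4k$ and not cyclic. For $r\equiv 2\pmod 4$ it is cyclic.
\end{itemize}

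For $\phi=x/y$:
\begin{itemize}
\item An element $g\circ\sigma\circ\tau$ multiplies $x/y$ by $\zeta_{2n}^{2(a-b)+r}$. This is never $1$, because $r\mid a-b$ and $r\mid n$.
\item Elements of $G_T\sigma$ and $G_T\tau$ invert $x/y$ up to a scalar, so they are not in the kernel either.
\item Hence $G_\phi=\langle\mathrm{diag}(\zeta_n,\zeta_n)\rangle\simeq\fC_n$ for every even $r$.
\end{itemize}
So the $xy$-link exists only when $r\equiv 2\pmod 4$, and the $x/y$-link always exists. This is the opposite of what you assert.

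\textbf{Step 2} is reversed in the same way.
\begin{itemize}
\item Over $(0,\infty)\cup(\infty,0)$, $G_T$ acts on $E_1$ (coordinate $xy$) by $s\mapsto\zeta_n^{a+b}s$. Here $a+b$ runs over all even residues mod $n$, so the period is $n/2$. The computation is then exactly that of Lemma~\ref{lemm:action-4}, and it produces \eqref{map5a} and \eqref{map5b}.
\item Over $(0,0)\cup(\infty,\infty)$, $G_T$ acts on $E_1$ (coordinate $x/y$) by $s\mapsto \zeta_n^{a-b}s$ with $r\mid a-b$, so the period is $k$. The orbit has length $4k$ unless $s^{4k}=1$, and ($\diamondsuit$) forces $s^{2k}\ne 1$. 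This produces \eqref{map5c} and \eqref{map5d}.
\end{itemize}
You can check this assignment directly from the formulas. \eqref{map5a} satisfies $r_1t_1/(r_2t_2)=xy$, so it lives over the $xy$-fibration. \eqref{map5c} sends $x/y$ to $-x/y$, so it lives over the $x/y$-fibration.

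Your final list of generators agrees with the statement only because you fitted the dichotomy to it. If you carried out your plan for $4\mid r$, you would look for an admissible $\tilde\Sigma$ over the $xy$-fibration, where none exists. You would also discard the $x/y$-fibration, which is exactly where \eqref{map5c} and \eqref{map5d} come from.

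Once the assignment is corrected, the rest of your plan agrees with the paper. The paper's own proof also stops once the explicit maps are written down, without a separate argument for $\bar\beta$. Note, though, that testing small $k$ and appealing to ``homogeneity in $k$'' would not by itself amount to that verification.
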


\begin{proof}
We use notation introduced in the proof of Lemma~\ref{lemm:action-4}. As in the proof of Lemma~\ref{lemm:action-4}, it suffices to consider the case when $r$ is even. The only difference is that now we have the following two possibilities:
\begin{enumerate}
\item either $\rho$ is the blow up of the $G$-orbit $(0,\infty)\cup (\infty,0)$ and $\phi$ in \eqref{quadric} is given by $(x,y)\mapsto xy$ (as in the proof of Lemmas~\ref{lemm:action-3} and \ref{lemm:action-4}), 
\item or $\rho$ is the blow up of the $G$-orbit $(0,0)\cup (\infty,\infty)$ and $\phi$ is given by $(x,y)\mapsto \frac{x}{y}$.
\end{enumerate}
In the first case, we proceed exactly as in the proof of  Lemma~\ref{lemm:action-4} to obtain the explicit equation of the birational map $\psi$ in \eqref{quadric}, which gives us one of the birational maps \eqref{map5a} or \eqref{map5b}. Let us deal with the second case.  

As in the proof of  Lemma~\ref{lemm:action-4}, we have $\tilde{\Sigma}\subset E_1\cap E_2$, which gives $\tilde{E}_1^\prime=E_1$ and $\tilde{E}_2^\prime=E_2$. This gives
$$
\tilde{H}_1^\prime\sim \frac{|\tilde{\Sigma}|+2}{2}H_1+\frac{|\tilde{\Sigma}|}{2}H_2-\frac{|\tilde{\Sigma}|}{2}(E_1+E_2),
$$
and 
$$
\tilde{H}_2^\prime\sim \frac{|\tilde{\Sigma}|}{2}H_1+\frac{|\tilde{\Sigma}|+2}{2}H_2-\frac{|\tilde{\Sigma}|}{2}(E_1+E_2),
$$
Similarly, we identify $E_1=\mathbb{P}^1$ and $E_2=\mathbb{P}^1$ using the isomorphisms $E_1\simeq\mathbb{P}^1$ and $E_2\simeq\mathbb{P}^1$ induced by the conic bundle $\pi$. Fix $s\in E_1\cap \tilde{\Sigma}$, and let $F_s$ be the fiber of $\pi$ that passes through $s$. Then $s\not\in\{0,\infty\}$, and $F_s$ is the strict transform of the curve in $S$ given by $x=sy$. Set $m=\frac{n}{r}$. Then $\tilde{\Sigma}\cap E_1$ consists of the points 
$$
s, \zeta_m s, \zeta_m^2 s, \cdots, \zeta_m^{m-1} s, \frac{\zeta_{2m}}{s}, \zeta_m\frac{\zeta_{2m}}{s}, \zeta_m^2\frac{\zeta_{2m}}{s}, \cdots, \zeta_m^{m-1}\frac{\zeta_{2m}}{s},
$$
and the intersection $\tilde{\Sigma}\cap E_2$ consists of the points 
$$
\frac{1}{s}, \zeta_m \frac{1}{s}, \zeta_m^2 \frac{1}{s}, \cdots, \zeta_m^{m-1} \frac{1}{s}, \zeta_{2m}s, \zeta_m\zeta_{2m}s, \zeta_m^2\zeta_{2m}s, \cdots, \zeta_m^{m-1}\zeta_{2m}s.
$$
Hence, $s^{2m}\ne 1$, because $E_2\cap F_s\not\in\tilde{\Sigma}$.
We have that $|\tilde{\Sigma}|=4m$ when $s^{4m}\ne 1$, and $|\tilde{\Sigma}|=2m$ when $s^{4m}=1$. We can explicitly find pencils in the linear systems $|\tilde{H}_1^\prime|$ and $|\tilde{H}_2^\prime|$ that consist of curves passing through $\tilde{\Sigma}$, choose appropriate basis in each of these pencils, and obtain an explicit equation for the birational map $\psi$ in \eqref{quadric} in each case. This gives the maps \eqref{map5c} and \eqref{map5d}.
\end{proof}

Arguing as in the proofs of Lemmas~\ref{lemm:action-3}, \ref{lemm:action-4}, \ref{lemm:action-5}, we obtain

\begin{lemm}
\label{lemm:action-6}
If $G$ is the group in Case (6) in Proposition~\ref{prop:P1P1gp} then
$$
\bar{\beta}(\Bir^G(S))=\bar{\beta}(\Aut^G(S)).
$$
Moreover, the following assertions hold:
\begin{itemize}
      \item If $r\equiv 2\pmod 4$, then $\Bir^G(S)$ is generated by $\Aut^G(S)$, Bertini involutions, Geiser involutions, Yasinsky involutions, and birational transformations~\eqref{map5c} and~\eqref{map5d}.
       \item If $r\equiv 0\pmod 4$, then $\Bir^G(S)$ is generated by $\Aut^G(S)$, Bertini involutions, Geiser involutions, Yasinsky involutions, and birational transformations~\eqref{map5c},~\eqref{map5d},~\eqref{map5a}, and~\eqref{map5b}.
\end{itemize}
\end{lemm}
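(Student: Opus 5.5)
We follow the scheme of Lemmas~\ref{lemm:action-3}, \ref{lemm:action-4}, \ref{lemm:action-5}. Fix $\chi\in\Bir^G(S)\setminus\Aut^G(S)$ and decompose it into $G$-Sarkisov links. Since $G$ is not one of the groups of Lemmas~\ref{lemm:C23} and \ref{lemm:D6}, the only $G$-orbits of length $2$ are $(0,0)\cup(\infty,\infty)$ and $(0,\infty)\cup(\infty,0)$. Orbits of length $5$ are excluded as explained before Proposition~\ref{prop:quadrics}. Links from orbits of length $4,6,7$ are Yasinsky, Geiser and Bertini involutions, and these lie in $\ker\bar\beta$ by \cite[Remark~2.3]{CTZ-linear} and Lemma~\ref{lemma:P1-P1-4-points}. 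So it remains to describe every $\psi$ as in \eqref{quadric}, and to check that each such $\psi$ either lies in $\Aut^G(S)$ up to composition or satisfies $\bar\beta(\psi)\in\bar\beta(\Aut^G(S))$.

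The first step is to apply Remark~\ref{remark:link-exists} to both choices of $\Sigma$, using the generators of Case (6): the torus part, $\sigma=(1/x,1/y)$ and $\tau=(\zeta_{2n}y,\zeta_{2n}^{1+r}x)$.

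For $\phi=x/y$, the kernel $G_\phi$ contains $\mathrm{diag}(\zeta_n,\zeta_n)$. Whether it also contains an involution acting on $\bP^1$ by $u\mapsto 1/u$ type depends on $\tau$. Here $\tau$ sends $x/y\mapsto \zeta_{2n}^{-r}\,y/x$, and $\sigma\tau$ gives the twisted versions.

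For $\phi=xy$, $\tau$ sends $xy\mapsto \zeta_{2n}^{2+r}xy$.

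Compute $G_\phi$ in each case and find exactly when it is cyclic; this should depend only on $r\bmod 4$. Since $\zeta_{2n}^r$ is a square root of $\zeta_n^r$, and $\zeta_n^r$ generates the translation part, the parity of $r/2$ decides whether $\tau\sigma$ or $\tau$ acts trivially on the base. The expected outcome is:
\begin{itemize}
\item if $r\equiv 2\pmod 4$, only $\Sigma=(0,0)\cup(\infty,\infty)$ yields a cyclic $G_\phi$;
\item if $r\equiv 0\pmod 4$, both choices of $\Sigma$ do.
\end{itemize}
This is the reverse of the pattern in Case (5), which is consistent with the statement.

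The second step is to determine the admissible orbits $\tilde\Sigma$. As in Lemma~\ref{lemm:action-4}, the cyclic generator of $G_\phi$ that must act freely on general fibres forces $\tilde\Sigma\subset E_1\cup E_2$. Then $\tilde E_i'=E_i$, and the classes of $\tilde H_1'$ and $\tilde H_2'$ are as computed there. Identify $E_1,E_2\cong\bP^1$ via $\pi$ and write $\tilde\Sigma\cap E_i$ explicitly in terms of a parameter $s$; the twist $\zeta_{2n}^{1+r}$ only shifts these points by a root of unity. Finding the pencils through $\tilde\Sigma$ in $|\tilde H_1'|$ and $|\tilde H_2'|$ then gives $\psi$. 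After conjugating by a diagonal torus element in $\Aut^G(S)$, which absorbs the extra $\zeta_{2n}$ factors, $\psi$ should coincide with \eqref{map5c}/\eqref{map5d} for $\phi=x/y$ and with \eqref{map5a}/\eqref{map5b} for $\phi=xy$. The cases $s^{4k}=1$ and $s^{2n}=1$ give the shorter orbits.

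The final step is to verify that each of these maps normalizes $G$ and that its image under $\bar\beta$ lies in $\bar\beta(\Aut^G(S))$. We do this by computing the conjugation action on the generators $\mathrm{diag}(\zeta_n^r,1)$, $\mathrm{diag}(\zeta_n,\zeta_n)$, $\sigma$ and $\tau$. The maps are monomial-twisted and should commute with the torus part, possibly up to $\sigma$, so the induced automorphism can be matched by an explicit element of the normalizer, such as a torus element or $(x,y)\mapsto(1/x,y)$-type elements.

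I expect the main obstacle to be the bookkeeping: confirming that the twisting by $\zeta_{2n}^{1+r}$ really swaps the roles of $r\equiv0$ and $r\equiv2\pmod 4$ relative to Case (5), and that the resulting outer automorphisms are realized in $\Aut^G(S)$. I would check this first for small values such as $n=r=2$ (excluded) and $n=r=4$, using Magma as in Lemma~\ref{lemm:C23}.
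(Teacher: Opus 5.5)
Your proposal is correct and follows the paper's route; the paper's proof only says to argue as in Lemmas~\ref{lemm:action-3}--\ref{lemm:action-5}. Your key computation is right: for $\phi=x/y$, $G_\phi$ is always the cyclic diagonal group, while for $\phi=xy$ some $g\tau$ with $g\in G_T$ acts trivially on the base exactly when $r\equiv 2\pmod 4$, which gives an involution in $G_\phi$ besides $(-1,-1)$, so $G_\phi$ is not cyclic.

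A few refinements. The inclusion $\tilde\Sigma\subset E_1\cup E_2$ holds because $G_\phi$ must fix $\tilde\Sigma$ pointwise by ($\diamondsuit$), and its fixed points on smooth fibres lie on $E_1\cup E_2$; it is not because $G_\phi$ acts freely on fibres. The resulting orbits coincide exactly with those written in the proofs of Lemmas~\ref{lemm:action-4} and \ref{lemm:action-5}, so no torus conjugation is needed. In the last step, \eqref{map5b}, \eqref{map5c} and \eqref{map5d} in fact commute with $G$, while \eqref{map5a} sends $\tau$ to $(-1,-1)\tau$, just as $(x,y)\mapsto(x,-y)\in\Aut^G(S)$ does. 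Note that $(x,y)\mapsto(1/x,y)$ does not normalize $G$ in Case (6).
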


\section{Degree 6 Del Pezzo surfaces}
\label{sect:dp6}

Let $S$ be the degree 6 del Pezzo surface, i.e., the blowup of $\bP^2$ in 
$$
[1:0:0], \quad [0:1:0], \quad [0:0:1]. 
$$
Its automorphism group fits into a split exact sequence
$$
1\to \bG_m^2\to \Aut(S) \stackrel{\nu}{\lra}  \fS_3\times \fC_2 \to 1.
$$
We identify $\Aut(S)$ with the group generated by the 
natural $\bG_m^2$-action on $\bP^2_{x_1,x_2,x_3}$,  a permutation action of 
$$
\fS_3=\langle \sigma_{123},\sigma_{12}\rangle,
$$
$$
\sigma_{123}: (x_1,x_2,x_3)\mapsto (x_2,x_3,x_1), \quad  \sigma_{12}: (x_1,x_2,x_3)\mapsto (x_2,x_1,x_3),
$$ 
and the standard Cremona involution $\iota$.

\subsection*{Classification of groups}
We proceed with the description of finite subgroups $G\subset \Aut(S)$, 
following \cite[Theorem 6.3]{DI}.  
 We have an exact sequence
$$
1\to G_T\to G\stackrel{\nu}{\lra} \fS_3\times \fC_2\simeq \fD_6.  
$$
Assuming that $\rk\Pic(S)^G=1$, 
we have
$$
\nu(G)=\fS_3\times \fC_2,  \quad \fC_3\times \fC_2, \text{ or } \fS_3',
$$
where $\fS_3'$ is
the {\em twisted}, by the center, subgroup of $\fD_6$. 

\begin{prop}
\label{prop:dp6}
Up to conjugation in $\Aut(S)$, one of the following holds:
\begin{enumerate}
    \item $G\simeq \fC_n^2\rtimes(\fS_3\times \fC_2)$  is generated by 
    $$
\mathrm{diag}(\zeta_n, 1, 1), \quad \mathrm{diag}(1, \zeta_n, 1), \quad \sigma_{123},\quad  \sigma_{12}, \quad \iota.
    $$
   \item $G\simeq (\fC_n\times \fC_{n/3})\rtimes(\fS_3\times \fC_2)$ is generated by 
    $$
\mathrm{diag}(\zeta_n^3, 1, 1), \quad \mathrm{diag}(\zeta_n^2, \zeta_n, 1), \quad \sigma_{123},\quad  \sigma_{12}, \quad \iota,
    $$   
 where $3\mid n$.

      \item $G\simeq \fC_n^2\rtimes\fS_3$ is generated by 
    $$
\mathrm{diag}(\zeta_n, 1, 1), \quad \mathrm{diag}(1, \zeta_n, 1), \quad \sigma_{123}, \quad \sigma_{12}\cdot \iota. 
    $$   
   \item $G\simeq (\fC_n\times \fC_{n/3})\rtimes\fS_3$ is generated by 
    $$
\mathrm{diag}(\zeta_n^3, 1, 1), \quad \mathrm{diag}(\zeta_n^2, \zeta_n, 1), \quad \sigma_{123}, \quad \sigma_{12}\cdot \iota,  
    $$   
    where $3\mid n$.  
      \item $G\simeq \fC_n^2\rtimes \fC_6$ is generated by 
 $$
\mathrm{diag}(\zeta_n, 1, 1), \quad \mathrm{diag}(1,\zeta_n, 1),  \quad \sigma_{123}\cdot \iota. 
    $$     
  \item  $G\simeq (\fC_n\times \fC_{n/r})\rtimes \fC_6$ is generated by 
 $$
\mathrm{diag}(\zeta_n^r, 1, 1), \quad \mathrm{diag}(\zeta_n^s, \zeta_n, 1), \quad \sigma_{123}\cdot \iota, 
    $$       
    where $r\ge 1$, $r\mid n$, and $s^2-s+1\equiv 0 \pmod r$.
\end{enumerate}
\end{prop}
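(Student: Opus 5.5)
The plan is to classify $G$ by the pair $(G_T,\nu(G))$, mirroring the proof of Proposition~\ref{prop:P1P1gp}. First I would pin down $G_T$ using the fact that $\nu(G)$ contains an element of order $3$ acting on the torus $\bG_m^2=\{\mathrm{diag}(t_1,t_2,t_3)\}/\bG_m$ via the Weyl group of $A_2$. This holds in each of the three cases $\fS_3\times\fC_2$, $\fC_3\times\fC_2$, $\fS_3'$.

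Let $n$ be the exponent of $G_T$, so that $G_T\subseteq \bG_m^2[n]\simeq\fC_n^2$, the lattice $\bZ^2$ mod $n$ carrying the $A_2$-action. The order-$3$ element $\sigma_{123}$ (possibly composed with $\iota$, which acts by $-1$) acts on $\bG_m^2[n]$ as an element of order $3$ or $6$. The $G_T$-stable subgroups containing an element of order $n$ are then classified as $\bZ[\omega]$-submodules of $\bZ[\omega]/n$, where $\omega$ is a primitive cube root of unity. Cyclic generation by $(\zeta_n^s,\zeta_n,1)$ together with $\mathrm{diag}(\zeta_n^r,1,1)$ gives the family in case (6), and invariance under $\sigma_{123}$ forces $s^2-s+1\equiv0\pmod r$ (the norm form of $\bZ[\omega]$).

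If $\nu(G)$ also contains the transposition $\sigma_{12}$ or $\sigma_{12}\iota$, then $G_T$ must additionally be stable under complex conjugation on $\bZ[\omega]$. That forces the ideal to be either all of $\bZ[\omega]/n$ or the one generated by $1-\omega$, which has index $3$ and requires $3\mid n$. These give the dichotomy $\fC_n^2$ versus $\fC_n\times\fC_{n/3}$, realized as in cases (1)–(4), with generator $\mathrm{diag}(\zeta_n^2,\zeta_n,1)$, i.e.\ $s=2$ (and $4-2+1=3\equiv0$ mod $3$).

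Next I would show that the extension splits up to conjugation. Choose lifts $g_3,g_2$ of the generators of $\nu(G)$; they have the form $t\cdot\sigma_{123}$, $t'\cdot\sigma_{12}$, $t''\iota$, and so on, with $t$ in the full torus. Conjugating by a torus element $u$ changes $t$ by $u\cdot{}^{\sigma}u^{-1}$. Since $H^1$ of the cyclic group $\langle\sigma_{123}\rangle$ (or $\langle\sigma_{123}\iota\rangle$) with coefficients in the divisible group $\bG_m^2$ is killed by the norm trick — $1+\sigma+\sigma^2$ acts as $0$ on the $A_2$ torus — I can move the lift to exactly $\sigma_{123}$ or $\sigma_{123}\iota$, up to an element of $G_T$. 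For the involution $\sigma_{12}$ or $\sigma_{12}\iota$ the remaining freedom is the centralizer of $\sigma_{123}$ in the torus, which is the finite group of order $3$. Here I would use the relations $(g_2g_3)^2\in G_T$ and $g_2^2\in G_T$, as in \eqref{eq:ab}, to show that the residual translation can be absorbed into $G_T$ or removed by that conjugation. The central $\iota$ (in cases (1),(2)) is handled similarly, using $\iota t\iota=t^{-1}$ and the fact that $\iota$ commutes with $\sigma$'s.

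The main obstacle is this cocycle step: verifying that no nonsplit twisted versions survive beyond those listed. Note that case (6), unlike (2), allows general $r$, and there the translation parts might a priori contribute a genuinely new coset. I would first try explicit computation in small cases ($n\le 6$), then check that the twisting element lies in $G_T\cdot\{u\,{}^\sigma u^{-1}\}$ by a direct lattice computation. Finally I would collect cases by $\nu(G)$: $\fS_3\times\fC_2$ gives (1),(2); $\fS_3'$ gives (3),(4); $\fC_6$ gives (5),(6), with (5) being the case $r=1$ absorbed or listed separately for clarity.
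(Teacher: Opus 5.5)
Your $\mathbb{Z}[\omega]$-module description of $G_T$ is correct, and it gives a self-contained substitute for what the paper imports from \cite{DI}. Your torus-conjugation step also fully settles the case $\nu(G)\simeq\fC_6$. Since $1-\sigma_{123}\iota$ acts on the cocharacter lattice as $1+\omega$, which is a unit, the map $u\mapsto u\,({}^{\sigma_{123}\iota}u)^{-1}$ is surjective on $T$ (here ${}^{g}u=gug^{-1}$). So the single lift can be conjugated to exactly $\sigma_{123}\iota$. This is the paper's conjugation by $(x_1,x_2,x_3)\mapsto(\frac ab x_1,ax_2,x_3)$, and there is no extra coset to worry about in case (6).

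The gap is in the involution step when $\nu(G)\supseteq\fS_3$ or $\fS_3'$. You take the residual freedom to be the centralizer $T^{\sigma_{123}}=\langle\mathrm{diag}(1,\zeta_3,\zeta_3^2)\rangle$, and expect the relations $g_2^2,(g_2g_3)^2\in G_T$ to do the rest. But if $3\mid n$, then $T^{\sigma_{123}}\subseteq G_T$ (both for $G_T=\fC_n^2$ and for the index-$3$ subgroup). So this conjugation does nothing, and the relations do not force the translation part into $G_T$.

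Here is an example. Take $n=3$, $G_T=\fC_3^2$, and $g_2=t\sigma_{12}$ with $t=\mathrm{diag}(\zeta_9,\zeta_9^{-1},1)$.
\begin{itemize}
\item $t\,({}^{\sigma_{123}}t)^{-1}\in G_T$, so $\langle G_T,\sigma_{123},g_2\rangle$ is a group with torus part $G_T$.
\item $g_2^2=1$, and every lift of a transposition squares into $G_T$.
\item Yet $t\notin G_T$.
\end{itemize}
The same obstruction appears for the lift of $\iota$ in case (2), which is the $b\in\{1,\zeta_3\}$ ambiguity in the paper's proof. It also appears for the lift of $\sigma_{12}\iota$ in case (4). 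As written, your argument leaves these twisted groups unaccounted for.

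The missing idea is that you may conjugate by any $u\in T$ with $u\,({}^{\sigma_{123}}u)^{-1}\in G_T$, not only by $u$ commuting with $\sigma_{123}$. Such $u$ sends $\sigma_{123}$ into $G_T\sigma_{123}\subset G$. These $u$ form the preimage of $G_T$ under the degree-$3$ isogeny $1-\sigma_{123}$. This is a group $\widetilde{G}_T$ of order $3|G_T|$, strictly larger than $G_T\cdot T^{\sigma_{123}}$ when $3\mid n$. In the example, $u=t$ gives $tg_2t^{-1}=\mathrm{diag}(\zeta_3,\zeta_3^{-1},1)\sigma_{12}\in G_T\sigma_{12}$.

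In general, normalizing $\sigma_{123}$ forces the translation part $t$ of an involution lift $t\rho$ into $\widetilde{G}_T$. Its class therefore lives in $\widetilde{G}_T/G_T\simeq\bF_3$, on which $\rho$ acts by $\pm1$.
\begin{itemize}
\item If $\rho$ acts by $+1$, the relation $(t\rho)^2\in G_T$ gives $2t=0$ in $\bF_3$.
\item If $\rho$ acts by $-1$, conjugation by $u\in\widetilde{G}_T$ shifts the class by $2u$, which is onto.
\end{itemize}
For $\iota$ in cases (1)--(2), one also needs $u$ to preserve the already normalized $\sigma_{12}$. This holds in case (2). In case (1), the relation with $\sigma_{12}$ already forces the class to vanish.

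With this correction your route proves the proposition. The paper instead takes the linear part in normal form from \cite{DI}, and only normalizes the Cremona-type generator by explicit conjugation.
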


\begin{proof}
When $\nu(G)=\fS_3\times \fC_2$, $G$ is generated by a subgroup $H$, which acts birationally to a transitive but imprimitive action on $\bP^2$, and a Cremona involution on $\bP^2_{x_1,x_2,x_3}$ given by  
$$
\iota_{ab}: (x_1,x_2,x_3)\mapsto (\frac{1}{x_1},\frac{a}{x_2},\frac{b}{x_3}),\quad a,b\in k^\times.
$$
By \cite{DI}, $H$ is generated by $G_T$, $\sigma_{123}$ and $\sigma_{12}$, where $G_T\simeq \fC_n^2$ or $\fC_n\times \fC_{n/3}$. Observe that 
\begin{align}\label{eqn:el1}
(\sigma_{123}^2\cdot\sigma_{12}\cdot\iota_{ab}\cdot\sigma_{12}\cdot\iota_{ab}\cdot\sigma_{123})^{-1}=\mathrm{diag}(a,1,a^2).
\end{align}
    Multiplying $\iota_{ab}$ by \eqref{eqn:el1}, we may assume that $a=1$. 
    Then 
    $$    \sigma_{123}^2\cdot\iota_{ab}\cdot\sigma_{123}\cdot \iota_{ab}=\mathrm{diag}(b^2,b,1).
    $$
    When $G_T=\fC_n^2$, for some $n$, it follows that $\mathrm{diag}(1,1,b)\in G_T$. We may assume $\iota_{ab}=\iota$ is the standard Cremona transformation and we obtain Case (1). 

    When $G_T=\fC_n\times \fC_{n/3}$, for some $n$ divisible by 3, we know that $b$ is a power of $\zeta_n$. Up to multiplying $\iota_{ab}$ by a power of $\mathrm{diag}(1,1,\zeta_n^3)\in G_T$, we may assume that $b=1 $ or $\zeta_3$. These two choices give conjugated subgroups. Indeed, the groups 
    $$
 \langle  \mathrm{diag}(\zeta_n^3, 1, 1), \quad \mathrm{diag}(\zeta_n^2, \zeta_n, 1), \quad \sigma_{123},\quad  \sigma_{12}, \quad \iota\rangle,
    $$
    and 
      $$
   \langle \mathrm{diag}(\zeta_n^3, 1, 1), \quad \mathrm{diag}(\zeta_n^2, \zeta_n, 1), \quad \sigma_{123},\quad  \sigma_{12}, \quad \iota\cdot\mathrm{diag}(1,1,\zeta_3) \rangle
    $$
    are conjugated in $\Aut(S)$ via 
    $$
    (x_1,x_2,x_3)\mapsto (\frac{1}{x_1},\frac{1}{x_2},\frac{\zeta_3^2}{x_3}).
    $$
   We obtain Case (2).

   When $\nu(G)=\fS_3'$, the same argument gives rise to Case (3) and (4). When $\nu(G)=\fC_6$, $G$ is generated by $G_T$ and 
   $$
  \tau: (x_1,x_2,x_3)\mapsto(\frac{a}{x_2}, \frac{b}{x_3},\frac1{x_1}).
   $$
 Up to conjugation by  
 $$
 (x_1,x_2,x_3)\mapsto (\frac ab x_1, ax_2,x_3),
 $$
 we may assume that $a=b=1$, i.e., $\tau=\sigma_{123}\cdot\iota$. By \cite[Theorem 4.7]{DI}, there are two possibilities for $G_T$, giving rise to Case (5) and (6) in the assertion.
\end{proof}

\subsection*{Sarkisov links}

By \cite[Section 7]{DI}, every Sarkisov link starting from $S$ is of type II, i.e., of the form
$$
\xymatrix{
&\tilde{S}\ar[dl]_{\rho}\ar[dr]^{\rho'}&\\
S\ar@{-->}[rr]^{\chi}&&S'
}
$$
where $\rho$ and $\rho'$ are blowups of $d\le 5$ points in general position on $S'$, and one of the following holds:
\begin{itemize}
    \item $d=5$, $S'$ is $G$-biregular to $S$, $\chi$ is the Bertini involution,
    \item $d=4$, $S'$ is $G$-biregular to $S$, $\chi$ is the Geiser involution,
    \item $d=3$, $S'$ is also a dP$_6$, 
      \item $d=2$, $S'$ is also a dP$_6$, 
      \item $d=1$, $S'=\bP^1\times\bP^1$.
\end{itemize}

\subsection*{$G$-Rigidity}
\label{sect:rigid-6}

We introduce special finite subgroups of $\mathrm{Aut}(S)$:
\begin{align*}
G_1&=\langle\mathrm{diag}(-1, 1, 1),\sigma_{123},\sigma_{12},\iota\rangle\simeq(\fC_2)^2\rtimes(\fS_3\times \fC_2)\simeq \fC_2\times\fS_4,\\
G_2&=\langle\mathrm{diag}(-1, 1, 1), \sigma_{123}, \sigma_{12}\cdot\iota\rangle\simeq (\fC_2)^2\rtimes \fS_3\simeq \fS_4,\\
G_3&=\langle\mathrm{diag}(-1, 1, 1), \sigma_{123}\cdot \iota\rangle\simeq (\fC_2)^2\rtimes \fC_6\simeq \fC_2\times\fA_4,\\
G_4&=\langle\mathrm{diag}(\zeta_3,\zeta_3^2,1),\sigma_{123},\sigma_{12},\iota\rangle\simeq \fC_3\rtimes(\fS_3\times \fC_2)\simeq\mathfrak{S}_3^2,\\
G_5&=\langle\mathrm{diag}(\zeta_3,\zeta_3^2,1), \sigma_{123},\sigma_{12}\cdot \iota\rangle\simeq \fC_3\times \fS_3\simeq \fC_3\rtimes \fC_6,\\
G_6&=\langle\mathrm{diag}(\zeta_3,\zeta_3^2,1),\sigma_{123}\cdot\iota\rangle\simeq \fC_3\rtimes \fC_6\simeq \fC_3\times \fS_3,\\
G_7&=\langle\sigma_{123},\sigma_{12},\iota\rangle\simeq \fS_3\times \fC_2,\\
G_8&=\langle\sigma_{123},\sigma_{12}\cdot \iota\rangle\simeq \fS_3,\\
G_9&=\langle\sigma_{123}\cdot \iota\rangle\simeq \fC_6.
\end{align*}
Observe that $G_1$ contains $G_2$ and $G_3$; $G_4$ contains $G_5\simeq G_6$; and $G_7$ contains $G_8$ and $G_9$.

If $G$ is $G_1$, $G_2$, or $G_3$, then $S$ contains a unique $G$-orbit of length $4$. Blowing it up, we obtain the $G$-equivariant commutative diagram:
\begin{align}\label{eqn:etadp6}
    \xymatrix{
\widetilde{S}\ar[d]_{\varpi}\ar[rr]^{\varphi}&&\widetilde{S}\ar[d]^{\varpi}\\
S\ar@{-->}[rr]^{\eta}&&S
}
\end{align}
where $\varpi$ is the blowup of the $G$-orbit of length $4$, $\widetilde{S}$ is the Fermat del Pezzo surface of degree $2$, $\varphi$ is a biregular involution, and $\eta$ is a Geiser birational involution that centralizes $G$.

Similarly, $S$ has a unique $G_4$-orbit of length $3$, whose points are in general position, which is also a $G_5$-orbit and $G_6$-orbit. Moreover, we have the following commutative diagram:
\begin{align}\label{eqn:taudp6}
\xymatrix{
\widehat{S}\ar[d]_{\rho}\ar[rr]^{\phi}&&\widehat{S}\ar[d]^{\rho}\\
S\ar@{-->}[rr]^{\tau}&&S
}
\end{align}
where $\rho$ is the blowup of the orbit of length $3$, $\widehat{S}$ is the Fermat cubic surface, $\phi$ is a biregular involution, and $\tau$ is a birational involution such that 
$$
\tau G_4\tau=G_4,
$$
so $\tau$ normalizes $G_4$, and $\langle G_4,\tau\rangle\simeq\mathfrak{S}_3\wr \fC_2$ has ${\tt GAPID(72,40)}$. It was noticed in \cite{yasinski}, that $\tau$ does not normalize $G_5$ and $G_6$: we have 
$$
\tau G_5\tau=G_6.
$$
In particular, $G_5$ and $G_6$ are conjugate in $\mathrm{Cr}_2$, so $S$ is neither $G_5$-birationally rigid nor $G_6$-birationally rigid.

Recall from \cite{Iskovskikh2003,Iskovskikh2008} that the action of  $G_7$ on $S$ is not linearizable, and $S$ is $G_7$-birational to a conic bundle. Hence,  $S$ is not $G_7$-solid. 
One can show that $\Aut^{G_7}(S)=G_7$, so $\bar{\beta}(\Aut^{G_7}(S))$ is trivial.
Note also that $G_7$ fixes a point in $S$, so blowing up the $G_7$-fixed point, we get a $G_7$-Sarkisov link that ends at $\mathbb{P}^1\times\mathbb{P}^1$, and the $G_7$-action on $\mathbb{P}^1\times\mathbb{P}^1$ has been studied in Lemma~\ref{lemm:D6}, which yields
$$
\bar{\beta}(\Bir^{G_7}(S))=\mathrm{Out}(G_7)\simeq \fC_2.
$$
Finally, we recall from \cite{PSY} that the actions of $G_8$ and $G_9$ on $S$ are linearizable. Indeed, if $G$ is one of these groups, then $G$ fixes a point in $S$, and blowing it up we obtain a $G$-birational map to $\mathbb{P}^1\times\mathbb{P}^1$, with $G$ fixing a point in $\mathbb{P}^1\times\mathbb{P}^1$, so the $G$-action is linearizable.

\begin{theo}
If $G$ is one of the groups in Proposition~\ref{prop:dp6} then: 
\begin{itemize}
\item  The surface $S$ is $G$-birationally superrigid if and only if $G$ is not conjugated to one of $G_1,G_2,G_3,G_4,G_5,G_5,G_7,G_8,G_9$. 

\item If $G$ is $G_1$, $G_2$, or $G_3$, then $S$ is $G$-birationally rigid, 
$\mathrm{Bir}^G(S)$ is generated by $\mathrm{Aut}^G(S)$, $\eta$ given in \eqref{eqn:etadp6}, and 
$$
\bar{\beta}(\Aut^G(S))=\bar{\beta}(\Bir^G(S)).
$$

\item If $G=G_4$, then $S$ is $G$-birationally rigid, $\mathrm{Bir}^G(S)$ is generated by $\mathrm{Aut}^G(S)$ and  $\tau$ given in \eqref{eqn:taudp6}, $\bar{\beta}(\Aut^G(S))$ is trivial, and
$$
\bar{\beta}(\Bir^G(S))=\mathrm{Out}(G)\simeq \fC_2.
$$

\item If $G=G_5$ or $G=G_6$, then 
$$
\mathrm{Bir}^G(S)=\mathrm{Aut}^G(S),
$$
the surface $S$ is $G$-birationally solid, and the only $G$-Mori fibre spaces that are $G$-birational are $S$ with $G_5$ and $G_6$-actions.
 \end{itemize}
 
\end{theo}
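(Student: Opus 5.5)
The plan is to control the $G$-orbits of small length on $S$ and then run the classification of $G$-Sarkisov links from $S$ recalled above (every link is of type II with $d\le 5$). A link with $d=1$ (to $\bP^1\times\bP^1$), $d=2$ or $d=3$ (to another dP$_6$) needs a $G$-orbit of length $1$, $2$ or $3$ in general position. The Geiser and Bertini involutions ($d=4,5$) need orbits of length $4$ or $5$. So the first step is to classify, for each group in Proposition~\ref{prop:dp6}, the $G$-orbits on $S$ of length at most $5$.

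For this I use the torus structure. The boundary hexagon of $(-1)$-curves is permuted transitively by $\nu(G)$, since $\rk\Pic(S)^G=1$. Its six vertices form orbits of length $3$ or $6$, and any orbit of length $3$ there is not in general position, because its points lie on a $(-1)$-curve configuration. Points on the open boundary edges lie in orbits of length at least $6$. Inside the torus $\bG_m^2$, an orbit has length at least $|G_T|$, and $\nu(G)$ has order $6$ or $12$. So an orbit of length at most $5$ forces $|G_T|\le 5$, and a point whose stabilizer in $G$ is large. Going through Cases (1)--(6) with small $n$ (and small $r$ in Case (6)), I expect the only survivors to be the following. For $G_T\simeq\fC_2^2$ there is the orbit of length $4$ of points $(\pm1,\pm1,1)$, giving $G_1,G_2,G_3$. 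For $G_T\simeq\fC_3$ there is the orbit of length $3$, giving $G_4,G_5,G_6$. For $G_T=1$ there is a fixed point, giving $G_7,G_8,G_9$. Orbits of length $2$ and $5$ should be excluded by divisibility: $G$ contains an element of order $3$ cycling the boundary, and the torus translations act freely. Away from these groups no link starts at $S$, so $S$ is $G$-birationally superrigid. This gives the first bullet, the converse being supplied by the non-rigidity constructions recalled above.

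For $G_1,G_2,G_3$, the only link is the Geiser link \eqref{eqn:etadp6}, so $\Bir^G(S)=\langle\Aut^G(S),\eta\rangle$. Since $\eta$ centralizes $G$, it lies in the kernel of $\bar\beta$, by \cite[Remark~2.3]{CTZ-linear}. For $G_4$, the only link is the one through the cubic surface in \eqref{eqn:taudp6}, which returns to $S$ with the same $G_4$-action. Hence $\Bir^{G_4}(S)=\langle\Aut^{G_4}(S),\tau\rangle$. I will compute $\Aut^{G_4}(S)$ as the normalizer of $G_4$ in the torus extension and check that it induces only inner automorphisms. I will then check, from the group $\langle G_4,\tau\rangle$ with GapID $(72,40)$, that conjugation by $\tau$ is outer on $\fS_3^2$, namely the swap of the two factors, so that its image is $\Out(G_4)\simeq\fC_2$. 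For $G_5$ and $G_6$, the only link from $S$ is $\tau$, which lands on $S$ with the other action. Thus the $G$-Mori fibre spaces are exactly these two. Any element of $\Bir^{G_5}(S)$ decomposes into $\tau$'s alternating between $G_5$ and $G_6$, and every such element, after composing with automorphisms, is a product of an even number of $\tau$'s conjugated by automorphisms. Since $\tau^2=1$, this should collapse to $\Aut^{G_5}(S)$. The main obstacle is this last step: I must verify that the $G_5$-Sarkisov links from the $G_6$-model are again only $\tau$, and that no composition produces a nonregular element. I would first try to show that $\Aut^{G_5}(S)$ normalizes the orbit of length $3$, and then use that $\tau$ is uniquely determined by that orbit.
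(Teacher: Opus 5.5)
Your proposal is correct and follows essentially the paper's route. The free torus action, together with the transitive action on the hexagon, shows that no $G$-Sarkisov link starts at $S$ unless $|G_T|\in\{1,3,4\}$, i.e.\ unless $G$ is one of $G_1,\dots,G_9$; one then reads off the Geiser link $\eta$ for $G_1,G_2,G_3$ and the cubic-surface link $\tau$ for $G_4,G_5,G_6$. The paper disposes of the $G_5/G_6$ step you flag in a single sentence, and your suggested fix is the right one: any $c$ normalizing $G_5$ or $G_6$ preserves the unique length-$3$ orbit in general position, hence lifts to the cubic surface and preserves both invariant extremal rays, so $\tau c\tau\in\Aut(S)$ and products with an even number of $\tau$'s are regular.
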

\begin{proof}
Using the description of $G$-Sarkisov links that start at $S$, we see that such links do not exists if $|G_T|\geq5$. Note that $|G_T|\ne 2$,$5$, by Proposition~\ref{prop:dp6}. Hence, 
$$
|G_T|\in\{1,3,4\}.
$$
Using Proposition~\ref{prop:dp6} again, we see that $G$ is conjugated to one of the groups 
$G_1,G_2,G_3,G_4,G_5,G_5,G_7,G_8,G_9$, so we may assume that $G$ is one of these. 

If $G$ is $G_1$, $G_2$, or $G_3$, then $S$ has a unique $G$-orbit of length $4$, and this is the only orbit of length $\leq 5$. Using the classification of $G$-Sarkisov links, we see that $S$ is $G$-birationally rigid, and $\mathrm{Bir}^G(S)$ is generated by $\mathrm{Aut}^G(S)$ and the involution $\eta$. 

Similarly, if $G$ is $G_4$, $G_5$ or $G_6$, then  $S$ has a unique $G$-orbit of length $3$, up to conjugation by $\Aut^G(S)$, whose points are in general position. Moreover, $S$ does not have $G$-orbits of length $1$, $2$, $4$ or $5$. Arguing as above, we see that $S$ is $G_4$-birationally rigid,  and $\mathrm{Bir}^{G_4}(S)$ is generated by $\mathrm{Aut}^{G_4}(S)$ and the involution $\tau$. If $G=G_5$ or $G=G_6$, then the only $G$-Sarkisov link that starts at $S$ is given by $\tau$, which is not $G$-birational. This implies that $S$ is $G$-solid, and the only $G$-Mori fiber spaces that are $G$-birational to $S$ are the surface $S$ equipped with $G_5$ and $G_6$-actions. This also shows that $\mathrm{Bir}^G(S)=\mathrm{Aut}^G(S)$ as claimed. 
\end{proof}

\section{Del Pezzo surfaces of degree 4}
\label{sect:dp4}

We follow the presentation in \cite[Section 6.4]{DI}. 

\subsection*{Classification of groups}

There is a natural action of the Weyl group $W(\mathsf D_5)$ on $\bP^4$, via 
the projectivization of a faithful 5-dimensional representation. 
A del Pezzo surface of degree 4 can be given as an intersection of two diagonal quadrics $S:=Q_1\cap Q_2\subset \bP^4$,
see, e.g., \cite[Lemma 6.5]{DI}. We have a diagram

\ 

\centerline{
\xymatrix{
1\ar[r] &  \fC_2^4\ar[r] \ar@{=}[d]& W(\mathsf D_5) \ar[r] & \fS_5 \ar[r] &1 \\
1\ar[r] & \fC_2^4 \ar[r]    & \Aut(S) \ar[r]  \ar@{^{(}->}[u]      &\overline{\Aut(S)} \ar[r]  \ar@{^{(}->}[u] &1 
}
}

\ 

\noindent 
The possibilities for $\overline{\Aut(S)}$ are
$$
\fC_1, \quad \fC_2,\quad \fC_4, \quad \fC_3, \quad \fS_3, \quad \fC_5,\quad  \mathfrak D_5.  
$$
Up to projectivity, the corresponding surfaces $S$ are given by 
\begin{itemize}
    \item[(I)] $\fC_1$:  $\sum\limits_{j=0}^4x_j^2 = \sum\limits_{j=1}^4 a_j x_j^2 =0$, $a_j$ general;
    \item[(II)] $\fC_2$: $\sum\limits_{j=0}^4x_j^2 =x_0^2 +ax_1^2-x_2^2 -ax_3^2 =0$, $a\ne0,\pm1$;
    \item[(III)] $\fC_4$: $\sum\limits_{j=0}^4x_j^2 =x_0^2 +\zeta_4x_1^2-x_2^2 -\zeta_4x_3^2 =0$;
    \item[(IV)] $\fS_3$: $x_0^2+\zeta_3x_1^2+\zeta_3^2x_2^2+x_3^2=x_0^2+\zeta_3^2x_1^2+\zeta_3x_2^2+x_4^2=0$;
    \item[(V)] $\mathfrak D_5$: $\sum\limits_{j=0}^4\zeta_5^jx_j^2 =\sum\limits_{j=0}^4\zeta_5^{4-j}x_j^2 = 0$.
\end{itemize}
All cases with $\rk\Pic(S)^G=1$ are listed in \cite[Theorem 6.9]{DI}; 
we recomputed all possibilities and present the actions, with supplementary information. In each type, the generators of $\Aut(S)$ are given by:
\begin{itemize}
  \item[(I)] $\iota_{i}$, $i=1,\ldots,4$,
  \item[(II)] $\sigma_{(02)(13)}$, $\iota_{i}$, $i=1,\ldots,4$,
  \item[(III)]  $\sigma_{(0123)}$, $\iota_{i}$, $i=1,\ldots,4$,
  \item[(IV)]  $\sigma_{(12)(34)}$,  $\sigma_{(012)}\cdot \tau$, $\iota_{i}$, $i=1,\ldots,4$,
    \item[(V)]  $\sigma_{(01234)}$,  $\sigma_{(14)(23)}$, $\iota_{i}$, $i=1,\ldots,4$,
\end{itemize}
where $\iota_i$ is the sign change of $x_i$, $\sigma_{s}$ is the permutation of variables corresponding to  $s\in\fS_5$, and 
$$
\tau=\mathrm{diag}(1,1,\zeta_3,\zeta_3^2).
$$

\

\begin{longtable}{|c|c|c|c|c|c|c|}
\hline
   Type &$G$ &GapID& Generators&$G$-rigid\\\hline
   I & $\fC_2^2$ & {\tt (4,2)}&$\iota_i, \iota_j$&no\\\hline
   I & $\fC_2^3$ & {\tt (4,2)}&$\iota_i, \iota_j, \iota_r$&yes\\\hline
    I & $\fC_2^4$ & {\tt (4,2)}&$\iota_1,\iota_2,\iota_3,\iota_4$&yes\\\hline
   \hline
II & $\fC_2\times \fC_4$ & {\tt (8,2)}&$\iota_4,\sigma_{(02)(13)}\iota_i,i=0,1$&yes\\\hline
II & $\fD_4$ & {\tt (8,3)}&$\sigma_{(02)(13)}, \iota_i,i=0,1$&no\\\hline
II & $\fC_2^2\rtimes \fC_4$ & {\tt (16,3)}&$\sigma_{(02)(13)}\iota_0, \iota_0\iota_1, \iota_0\iota_2$&yes\\\hline
II & $\fC_2^2\rtimes \fC_4$ & {\tt (16,3)}&${\sigma_{(02)(13)}\iota_0\iota_1, \iota_0\iota_4,\iota_i,i=0,1}$&yes\\\hline
II & $\fC_2\times \fD_4$ & {\tt (16,11)}&$\sigma_{(02)(13)},\iota_4,\iota_0\iota_2,\iota_i,i=0,1$&yes\\\hline
II & $\fC_2^2\wr \fC_2$ & {\tt (32,27)}&$\sigma_{(02)(13)},\iota_1,\iota_2$&yes\\\hline\hline
    III & $\fC_8$ & {\tt (8,1)}&$\sigma_{(0123)}\iota_2$&no\\\hline
III & $\mathrm{OD16}$ & {\tt (16,6})&$\sigma_{(0123)}\iota_2,\iota_{0}\iota_2$&yes\\\hline
III & $\fC_2^3.\fC_4$ & {\tt (32,7)}&$\sigma_{(0123)}\iota_2,\iota_{0}\iota_1$&yes\\\hline
III & $\fC_2\wr \fC_4$ & {\tt (64,32)}&$\sigma_{(0123)},\iota_0$&yes\\\hline\hline
IV& $\fC_2\times \fC_6$ & {\tt (12,5)}&$\sigma_{(012)}\cdot \tau\cdot\iota_4,\iota_3$&no\\\hline
IV& $\fC_2\times \fA_4$ & {\tt (24,13)}&$\iota_3\iota_4,\iota_0\iota_1,\sigma_{(012)}\tau$&yes\\\hline
IV& $\fC_2^2\times \fA_4$ & {\tt (48,49)}&$\iota_3\iota_4,\iota_0\iota_1,\iota_0\iota_3,\sigma_{(012)}\tau$&yes\\\hline
IV & $\fC_3\rtimes \fC_4$ & {\tt (12,1)}&$\sigma_{(012)}\tau,\sigma_{(12)(34)}\iota_4$&no\\\hline
IV & $\fC_3\rtimes \fD_4$ & {\tt (24,8)}&$\sigma_{(012)}\tau,\sigma_{(12)(34)},\iota_4$&no\\\hline
IV & $\fC_2\times \fS_4$ & {\tt (48,48)}&$\sigma_{(012)}\tau,\sigma_{(12)(34)},\iota_3\iota_4,\iota_0\iota_2$&yes\\\hline
IV & $\fA_4\rtimes \fC_4$ & {\tt (48,30)}&$\sigma_{(012)}\tau,\iota_0\iota_2,\sigma_{(12)(34)}\iota_0\iota_3$&yes\\\hline
IV & $\GL_2(\bF_4)$ & {\tt (96,195)}&$\sigma_{(012)}\tau, \sigma_{(12)(34)},\iota_1,\iota_3$&yes\\\hline\hline
V & $\fC_2^4\rtimes \fC_5$ & {\tt (80,49)}&$\sigma_{(01234)}\iota_2,\iota_2\iota_3,\iota_4$&yes\\\hline
V & $\fC_2^4\rtimes \fD_5$ & {\tt (160,234)}&$\sigma_{(01234)},\sigma_{(14)(23)},\iota_1$&yes\\\hline
\end{longtable}

Recall from  \cite{PSY} that $S$ is $G$-birationally rigid if and only if $G$ does not fix points in $S$, i.e., $S^G=\varnothing$. This explains the entries in the last column. When $S^G=\varnothing$, the  classification of $G$-Sarkisov links implies that every $G$-birational map from $S$ to a smooth del Pezzo surface $S^\prime$ can be decomposed into a composition of $G$-biregular maps, Bertini involutions, and Geiser involutions, so, in particular, $S^\prime$ is $G$-birational to $S$, and $\mathrm{Bir}^G(S)$ is generated by $\mathrm{Aut}^G(S)$, Bertini involutions, and Geiser involutions,
which implies 
$$
\bar{\beta}(\Aut^G(S))=\bar{\beta}(\Bir^G(S)).
$$
Similarly, if $S$ is not $G$-birationally rigid, our classification of $G$-actions implies a recent result of Shramov and Trepalin \cite[Corollary 7.5]{shramov-trepalin}, cf. also \cite[Theorem 3.5]{Elagin}:

\begin{theo}
\label{theo:ST}
If there exists a $G$-birational map $S\dasharrow S^\prime$ such that $S^\prime$ is a smooth del Pezzo surface of degree $4$, then $S$ and $S^\prime$ are $G$-biregular.
\end{theo}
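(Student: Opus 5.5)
We reduce to the non-rigid case and then to an explicit comparison of the few remaining actions. Here $G\subseteq\Aut(S)$ with $\rk\Pic(S)^G=1$, and $S'$ is a degree $4$ del Pezzo surface with a $G$-action satisfying $\rk\Pic(S')^G=1$; this is implicit, since otherwise $S'$ is not a $G$-Mori fibre space and the statement is not about Mori fibre spaces.

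\textbf{The rigid case.} If $S^G=\varnothing$, then by \cite{PSY} the surface $S$ is $G$-birationally rigid. As explained just above the statement, every $G$-birational map $S\dasharrow S'$ then decomposes into $G$-biregular maps, Bertini involutions and Geiser involutions. Bertini and Geiser involutions are birational self-maps of $S$, so $S'$ is $G$-biregular to $S$.

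\textbf{Reduction to a fixed point on both sides.} Assume $S^G\neq\varnothing$. If $S'^G=\varnothing$, then by the previous paragraph applied to $S'$, the inverse map $S'\dasharrow S$ forces $S\simeq S'$ $G$-equivariantly. This is impossible, since it would make $S^G$ empty. So both $S^G$ and $S'^G$ are nonempty. By the table, $G$ is one of the non-rigid groups
$$\fC_2^2,\quad \fD_4,\quad \fC_8,\quad \fC_2\times\fC_6,\quad \fC_3\rtimes\fC_4,\quad \fC_3\rtimes\fD_4,$$
and the same holds for the $G$-action on $S'$. The abstract group is the same on both sides. Each of these abstract groups occurs in exactly one row of the table, so $S$ and $S'$ have the same type (I--IV) and the $G$-actions have the same shape up to conjugation in $W(\mathsf D_5)$.

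\textbf{The main obstacle: comparing the moduli.} Types III and IV are rigid surfaces, unique up to isomorphism. For these we need only check that the action of $G$ is unique up to $\Aut(S)$ and $\Out(G)$. We verify this by computing the normalizer $\Aut^G(S)$ and its image in $\Out(G)$, as was done for the quadric in Section~\ref{sect:quad}.

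Types I and II have moduli, given by the parameters $a_j$ or $a$, so here we must show that the parameters of $S$ and $S'$ agree up to the natural symmetries. The plan is to exploit the fixed point. Blowing up a $G$-fixed point $P\in S$ not on a $(-1)$-curve gives a cubic surface, and contracting the appropriate curves yields a $G$-Sarkisov link to a $G$-conic bundle. Any $G$-birational map $S\dasharrow S'$ then factors through links among such conic bundles and degree $4$ surfaces. We track an invariant preserved by these links, namely the $G$-equivariant data of the fixed curves of the elements of $G$:
\begin{itemize}
\item their genera,
\item their residual actions, recorded as incompressible symbols in $\Burn_2(G)$,
\item the moduli of the fixed elliptic and hyperelliptic curves.
\end{itemize}
For instance, when $G=\fC_2^2=\langle\iota_i,\iota_j\rangle$ in type I, the involutions $\iota_i$ fix genus one curves $S\cap\{x_i=0\}$. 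These are incompressible, and their $j$-invariants and the residual $G$-actions on them are birational invariants by the Burnside formalism of \cite{BnG}. We would show that these $j$-invariants, together with the branch data of the residual action, recover the cross-ratios $a_j$ up to the Weyl symmetry. The same approach applies to $\fD_4$ in type II, where the fixed curves again determine $a$.

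This recovery step, showing that the Burnside invariants determine the parameters in types I and II, is the step where the argument could fail. As a fallback we would use the explicit Sarkisov-link analysis to show directly that every $G$-birational map preserves the pencil of quadrics $\langle Q_1,Q_2\rangle$ and its five degenerate members, whose positions determine $S$.
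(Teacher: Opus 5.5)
Your rigid case and your reduction to $S^G\ne\varnothing$, $S'^G\ne\varnothing$ are fine and agree with the paper. The non-rigid case, however, has a genuine gap.

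\textbf{Types I and II.} Your main tool cannot recover the moduli. For $G=\langle\iota_i,\iota_j\rangle$ the incompressible data are the elliptic curves $S\cap\{x_i=0\}$ and $S\cap\{x_j=0\}$ with their residual involutions. These amount to two $j$-invariants, namely cross-ratios of four of the five degenerate quadrics of the pencil, and they determine that configuration only up to finitely many choices. Your claim that $S$ and $S'$ have the ``same type'' is also unjustified. The type column records the most general surface carrying the group: $\langle\iota_i,\iota_j\rangle$ acts with $\rk\Pic(S)^G=1$ on every quartic del Pezzo surface, and the $\fD_4$ of type II also acts on the type III surface ($a=\zeta_4$). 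So you would have to compare across types as well.

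Your fallback, that every $G$-birational map preserves the pencil, restates the conclusion rather than proving it. The missing idea is the link analysis the paper uses:
\begin{itemize}
\item Besides Bertini and Geiser involutions, the only $G$-Sarkisov links from $S$ are the blow-ups $\sigma\colon\tilde S\to S$ of points $P\in S^G$, followed by the conic bundle $\pi\colon\tilde S\to\bP^1$ given by projection from $T_P(S)$.
\item Since $\sigma$ and $\pi$ are the only $G$-extremal contractions of $\tilde S$, the only way to continue is an elementary transformation centred at an orbit satisfying ($\diamondsuit$) and ($\heartsuit$).
\item Such an orbit exists only if the kernel $G_\phi$ of $G\to\Aut(\bP^1)$ is cyclic, because $G_\phi$ preserves each fibre and must fix the unique orbit point on it.
\item For $\fC_2^2$, $\fD_4$, $\fC_2\times\fC_6$ and $\fC_3\rtimes\fD_4$, one has $G_\phi\supseteq\fC_2^2$ for every $P\in S^G$ (Corollary~\ref{coro:dP4-C2-C2}).
\end{itemize}
Hence every chain of links from $S$ to a quartic del Pezzo surface is a composition of automorphisms, Bertini and Geiser involutions. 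This gives $S'\simeq S$ equivariantly, with no comparison of moduli at all.

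\textbf{Types III and IV.} The normalizer computation alone does not suffice either. $G$-biregularity refers to the given homomorphisms $G\to\Aut(S)$ and $G\to\Aut(S')$. Two actions with the same image that differ by $\alpha\in\Out(G)$ are $G$-biregular only if $\alpha\in\bar\beta(\Aut^G(S))$. Whenever this image is a proper subgroup of $\Out(G)$, you are left with several non-$G$-biregular actions on the same surface, and you must show they are not $G$-birational.

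This happens for $\fC_8$. There $\Aut^G(S)=\langle G,\iota_0\iota_2\rangle\simeq\mathrm{OD16}$ has image of order $2$ in $\Out(\fC_8)\simeq\fC_2^2$. So, with $h=\sigma_{(0123)}\iota_2$, the actions $g\mapsto h$ and $g\mapsto h^3$ are not $G$-biregular. The paper separates them in Lemma~\ref{lemm:dP4-C8} using the incompressible symbol of $C=S\cap\{x_4=0\}$, which is fixed by $h^4=\iota_4$. The two residual $\fC_4$-actions on $C$ have different classes in $\Burn_1(\fC_4)$: the weights at the two fixed points are $(1)$ versus $(3)$.

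Your normalizer check does suffice for $\fC_3\rtimes\fC_4$, since there $\bar\beta(\Aut^G(S))=\Out(G)$. The remaining type IV groups are covered by the $G_\phi$ argument above.
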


However, if $S^G\ne\varnothing$, the group $\Bir^G(S)$ is not always generated by $\Aut^G(S)$, Bertini involutions, and Geiser involutions. The missing generators 
$$
\chi\in\Bir^G(S)
$$ 
can be decomposed into a sequence of three $G$-Sarkisov links, given by the following $G$-equivariant commutative diagram:
\begin{equation}
\label{Vasya}    
\xymatrix{
&&\hat{S}\ar[dl]_{\eta}\ar[dr]^{\eta^{\prime}}&&\\
&\tilde{S}\ar[dl]_{\sigma}\ar[dr]^{\pi}&&\tilde{S}^\prime\ar[dl]_{\pi^\prime}\ar[dr]^{\sigma^\prime}&\\
S\ar@{-->}[rr]^{\phi}\ar@{-->}@/_1.5pc/[rrrr]_{\chi}&&\mathbb{P}^1&&S}
\end{equation}
where $\phi$ is the rational map induced by the linear projection from the embedded tangent space $T_P(S)\subset\mathbb{P}^4$ of a $G$-fixed point $P\in S^G$, and the remaining maps are:
\begin{itemize}
\item $\sigma$ is the blow up of the point $P$,
\item $\pi$ is a conic bundle, 
\item  $\eta$ is a blow up of a $G$-orbit $\tilde{\Sigma}$ such that 
\begin{itemize}
\item[($\diamondsuit$)] each smooth fiber of $\pi$ contains at most one point in $\tilde{\Sigma}$,
\item[($\heartsuit$)] no points of $\tilde{\Sigma}$ are contained in singular fibers of $\pi$,
\end{itemize}
\item 
$\eta^\prime$ is the contraction of the strict transforms of the fibers of $\pi$ that contain points of $\tilde{\Sigma}$, and 
\item 
$\pi^\prime$ is a conic bundle. 
\item $\sigma^\prime$ is the blow up of a $G$-fixed point. 
\end{itemize}
The $G$-birational self-map $\chi$ is determined by a choice of a $G$-fixed point $P\in S^G$ and a $G$-orbit $\tilde{\Sigma}\subset\tilde{S}$ that satisfies both ($\diamondsuit$) and ($\heartsuit$). 
Note that $\tilde{S}$ and $\tilde{S}^\prime$ in \eqref{Vasya} are smooth cubic surfaces, that need not be $G$-biregular. 

\begin{defi}
A map $\chi\in\Bir^G(S)$ as in \eqref{Vasya} will be called  {\em Iskovskikh self-map} of $S$. 
\end{defi}

Thus, the group $\Bir^G(S)$ is generated by $\Aut^G(S)$, Bertini and Geiser involutions, and Iskovskikh self-maps. Note that Iskovskikh self-maps of $S$ may not exist even if $S^G\ne\varnothing$. However, arguing as in Remark~\ref{remark:link-exists}, we obtain a simple criterion for their existence. Namely, if $P$ is a point in $S^G$, and $\phi\colon S\dasharrow\mathbb{P}^1$ is the rational map induced by the linear projection from the embedded tangent space $T_P(S)\subset\mathbb{P}^4$ of $P$, then $\phi$ is $G$-equivariant, so it induces the exact sequence
$$
1\to G_\phi\to G\to G_{\mathbb{P}^1}\to 1,
$$
where $G_{\phi}$ is the kernel of the $G$-action on $\mathbb{P}^1$, and $G_{\mathbb{P}^1}$ is the image of $G$ in $\mathrm{Aut}(\mathbb{P}^1)$. Conditions ($\diamondsuit$) and ($\heartsuit$) imply that $G_{\phi}$ is cyclic, because 
otherwise each smooth fiber of $\pi$ in \eqref{Vasya} contains at least two points of $\tilde{\Sigma}$. Conversely, if $G_{\phi}$ is cyclic, then $\tilde{S}$ in \eqref{Vasya} always contains $G$-orbits that satisfy both ($\diamondsuit$) and ($\heartsuit$). 

\begin{coro}
\label{coro:dP4-C2-C2}
If $G$ is isomorphic to  
$$
\fC_2^2, \quad \fD_4, \quad \fC_2\times \fC_6, \quad \text{ or } \quad \fC_3\rtimes \fD_4
$$
then $\mathrm{Bir}^G(S)$ does not have Iskovskikh self-maps, and is generated by $\mathrm{Aut}^G(S)$, Bertini and Geiser involutions.  In particular, 
$$
\bar{\beta}(\Aut^G(S))=\bar{\beta}(\Bir^G(S)).
$$
\end{coro}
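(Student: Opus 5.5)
The plan is to apply the criterion stated just before the corollary: an Iskovskikh self-map based at $P\in S^G$ exists only if the kernel $G_\phi$ of the $G$-action on the base $\mathbb{P}^1$ of the conic bundle $\pi$ is cyclic. So for each group in the list, and each $G$-fixed point $P\in S^G$, it suffices to exhibit two elements of $G_\phi$ that generate a non-cyclic subgroup. Once no Iskovskikh self-maps exist, $\mathrm{Bir}^G(S)$ is generated by $\mathrm{Aut}^G(S)$, Bertini and Geiser involutions, by the decomposition statement preceding the definition of Iskovskikh self-maps. The Bertini and Geiser involutions lie in $\ker\bar\beta$ by \cite[Remark~2.3]{CTZ-linear}, which gives $\bar{\beta}(\Aut^G(S))=\bar{\beta}(\Bir^G(S))$.

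For the main step I will use the explicit models from the table. These are Type I for $\fC_2^2=\langle\iota_i,\iota_j\rangle$, Type II for $\fD_4=\langle\sigma_{(02)(13)},\iota_0,\iota_1\rangle$, and Type IV for $\fC_2\times\fC_6$ and $\fC_3\rtimes\fD_4$. In each case I first compute $S^G$. For $\fC_2^2=\langle\iota_i,\iota_j\rangle$, the fixed points are the points of $S$ with $x_i=x_j=0$, which is $4$ points. Every $g\in G$ fixing $P$ acts on $T_PS$ and hence on the pencil of hyperplanes through $T_P(S)$. This pencil is spanned by two linear forms $\ell_1,\ell_2$ vanishing on $T_P(S)$, and I will write down $\ell_1,\ell_2$ explicitly at $P$ using the diagonal equations.

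The key observation should be that a diagonal sign change $\iota_k$ acts on the pencil trivially whenever it scales $\ell_1$ and $\ell_2$ by the same sign. Taking $P=(p_0:\dots:p_4)$, the tangent space $T_P(S)$ is cut out by $\sum p_ix_i=\sum a_ip_ix_i=0$. Since $x_i,x_j$ vanish at $P$, these forms do not involve $x_i,x_j$, so they are invariant under $\iota_i$ and $\iota_j$. Hence $G_\phi\supseteq\langle\iota_i,\iota_j\rangle\simeq\fC_2^2$, which is not cyclic. For the larger groups, I expect $G_\phi$ to contain the sign-change subgroup $G\cap\fC_2^4$ stabilizing the relevant coordinates, or a Klein four-subgroup of it: in the $\fD_4$ case $\langle\iota_0\iota_2,\iota_1\iota_3\rangle$ or similar, and in the Type IV cases $\langle\iota_3,\iota_4\rangle$ or $\langle\iota_3\iota_4,\ldots\rangle$. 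I will verify this at each fixed point, using $\Aut^G(S)$ to reduce to one fixed point per $\Aut^G(S)$-orbit.

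The main obstacle is the Type IV groups $\fC_2\times\fC_6$ and $\fC_3\rtimes\fD_4$. There the fixed points have less symmetric coordinates, and $G_\phi$ must be identified carefully, since the elements of order $3$ act nontrivially on the base while the $2$-part may act only partly. I would first locate $S^G$ as the points fixed by the order-$3$ element $\sigma_{(012)}\tau$ that are also fixed by the involutions. Then I would check that two commuting involutions, for instance $\iota_3$ and $\iota_4$ or their product with $\sigma_{(12)(34)}$, fix both tangent-pencil generators up to a common scalar. If this fails, so that $G_\phi$ turns out cyclic, the fallback is to show that the resulting Iskovskikh map is conjugate to an element of $\mathrm{Aut}^G(S)$. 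I do not expect to need this fallback.
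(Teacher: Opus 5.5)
Your proposal is correct and follows the paper's argument: at every $P\in S^G$ the kernel $G_\phi$ contains a $\fC_2^2$, so no Iskovskikh self-map exists. Your observation that sign changes $\iota_i,\iota_j\in G$ of coordinates vanishing at $P$ act trivially on the pencil through $T_P(S)$ settles every case uniformly; for both Type IV groups this gives $S^G=\{(1:1:1:0:0)\}$ and $G_\phi\supseteq\langle\iota_3,\iota_4\rangle$.

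One small correction concerns $\fD_4$. The table means $\langle\sigma_{(02)(13)},\iota_0\rangle$ (or the same with $\iota_1$), not $\langle\sigma_{(02)(13)},\iota_0,\iota_1\rangle$, which has order $32$. Its fixed points are $(0:1:0:1:\pm\sqrt{-2})$, and there $G_\phi=\langle\iota_0,\iota_2\rangle$ (respectively $\langle\iota_1,\iota_3\rangle$), not $\langle\iota_0\iota_2,\iota_1\iota_3\rangle$.
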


\begin{proof}
For every $P\in S^G$, the kernel of $G\to\mathrm{Aut}(\mathbb{P}^1)$ described above contains a subgroup isomorphic to $\fC_2^2$, so Iskovskikh self-maps of $S$ do not exists.
\end{proof}

\begin{lemm}
\label{lemm:dP4-C8}
If $G\simeq \fC_8$ then 
$$
\fC_2\simeq\bar{\beta}(\Aut^G(S))=\bar{\beta}(\Bir^G(S))\ne\mathrm{Out}(\fC_8)\simeq \fC_2^2.
$$
\end{lemm}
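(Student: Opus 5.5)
We work with the type III surface $S$: $\sum x_j^2=x_0^2+\zeta_4x_1^2-x_2^2-\zeta_4x_3^2=0$, and $G=\langle g\rangle$ with $g=\sigma_{(0123)}\iota_2$. Here $\mathrm{Out}(\fC_8)=\mathrm{Aut}(\fC_8)\simeq(\mathbb Z/8)^\times\simeq\fC_2^2$, acting by $g\mapsto g^k$ with $k\in\{1,3,5,7\}$. The proof has three parts: compute $\bar\beta(\Aut^G(S))$, show that every element of $\Bir^G(S)$ has image in it, and show that it is a proper subgroup.

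\emph{Step 1: the biregular normalizer.} We compute the normalizer $N$ of $G$ in $\Aut(S)=\fC_2^4\rtimes\fC_4$, where $\fC_2^4$ is generated by the $\iota_i$ and $\fC_4=\langle\sigma_{(0123)}\rangle$. The element $g^4$ lies in $\fC_2^4$; a direct computation gives $g^4=\iota_0\iota_1\iota_2\iota_3$. Conjugating $g$ by sign changes $\iota_I\in\fC_2^4$ gives $\iota_I\,g\,\iota_I=g\cdot\iota_{\sigma(I)}\iota_I$. This equals $g^5=g\,g^4$ exactly when $\sigma(I)\triangle I=\{0,1,2,3\}$, that is, $I=\{0,2\}$ or $I=\{1,3\}$. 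Hence the class of $k=5$ lies in $\bar\beta(\Aut^G(S))$.

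\emph{Step 2: excluding $k=3,7$ biregularly.} Automorphisms of $S$ map to the cyclic group $\overline{\Aut(S)}\simeq\fC_4$, which is abelian. The image of $g$ there is a generator, so any normalizing automorphism sends $g\mapsto g^k$ with $k\equiv1\pmod 4$. Thus $k=3,7$ are excluded, and $\bar\beta(\Aut^G(S))=\{1,5\}\simeq\fC_2$. (One could confirm this with {\tt Magma}, but the argument is elementary.)

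\emph{Step 3: birational maps.} We first check that $S^G\neq\varnothing$. The fixed points of $g$ satisfy $x_1=\lambda x_0$, $x_2=\lambda x_1$, $-x_3=\lambda x_2$, $x_0=\lambda x_3$, and $x_4=\mu x_4$; this forces $\lambda^4=-1$. Intersecting the resulting eigenlines (and possibly the eigenline $e_4$) with $S$ should give a $G$-fixed point, consistent with "no" in the rigidity column. Now $\Bir^G(S)$ is generated by $\Aut^G(S)$, Bertini and Geiser involutions, and Iskovskikh self-maps. The Bertini and Geiser involutions lie in $\ker\bar\beta$ by \cite[Remark~2.3]{CTZ-linear}. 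For an Iskovskikh self-map $\chi$ as in \eqref{Vasya}, the kernel $G_\phi$ is a subgroup of $\fC_8$, hence cyclic, so such maps exist. The main obstacle is computing $\bar\beta(\chi)$.

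I would argue via the Burnside formalism, or via fixed-locus data, which are $\Cr_2$-invariants of the action. For $g^k$ with $k$ odd, the fixed points of $g^k$ on $S$ and the weights of the tangent action at each fixed point form a birational invariant. They enter the class of $[S\actsfromright G]$ in $\Burn_2(G)$ through the symbols $(\fC_8,\mathrm{triv}\actsfromleft k,(a,b))$ at isolated fixed points, and these symbols are invariant under equivariant blowups, up to the blowup relations. Concretely, I would compute the weights $(a,b)\in(\mathbb Z/8)^2$ at the $G$-fixed points and check that $k=3$ does not preserve the resulting class in $\Burn_2(\fC_8)$, while $k=5$ does. Since $\bar\beta(\Bir^G(S))$ is a subgroup of $\fC_2^2$ containing $\{1,5\}$ and not containing $3$, it equals $\{1,5\}$.

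A fallback for this last step is to compute $\chi$ explicitly, as in Lemmas~\ref{lemm:action-3} and \ref{lemm:action-4}. This would show that $\chi$ commutes with $G$ up to $\Aut^G(S)$, because the fibers of $\pi$ through $\tilde\Sigma$ are permuted compatibly with $G$.
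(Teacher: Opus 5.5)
Your Steps 1 and 2 are correct. Step 2 is a tidy alternative to the paper's computation $\Aut^G(S)=\langle G,\iota_0\iota_2\rangle\simeq\mathrm{OD16}$: since $g$ maps to a generator of the abelian quotient $\overline{\Aut(S)}\simeq\fC_4$, any normalizing automorphism induces $g\mapsto g^k$ with $k\equiv 1\pmod 4$.

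The gap is Step 3, which carries the entire content of the lemma. Nothing in it shows that $g\mapsto g^3$ is not induced by a $G$-birational self-map. Your assertion that the fixed points of $g^k$, together with their tangent weights, form a birational invariant is false. Blowing up a $G$-fixed point with weights $(a,b)$, $a\ne b$, replaces it by fixed points with weights $(a,b-a)$ and $(a-b,b)$; for example, $(2,3)$ is replaced by $(2,1)$ and $(7,3)$. The isolated-point symbols $(\fC_8,\mathrm{triv}\actsfromleft k,(a,b))$ are subject to exactly these blowup relations. So the fact that the two actions have different weight data (here $\{(2,3),(2,7)\}$ versus $\{(6,1),(6,5)\}$) proves nothing until you exhibit an invariant that is well defined modulo the relations. ``Check that $k=3$ does not preserve the class'' is the statement to be proved, not a proof. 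Your fallback via explicit Iskovskikh self-maps is likewise only announced.

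The missing idea, used in the paper's proof, is to work with a \emph{divisorial} symbol. The element $g^4=\iota_4$ fixes pointwise the elliptic curve $C=S\cap\{x_4=0\}$. This produces the symbol $(\fC_2,\fC_4\actsfromleft k(C),(1))$, which is incompressible because $C$ is not rational, and so is read off from the class in $\Burn_2(G)$. Twisting by $3$ replaces the residual $\fC_4$-action on $C$ by its cube, so the weights at its two fixed points change from $1$ to $3$. Since $\Burn_1(\fC_4)$ has no relations, the two residual actions are not birational, and therefore neither are the two $G$-actions.

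Your fixed-point approach can be repaired, but only by passing to the Kontsevich--Pestun--Tschinkel group $\mathcal{B}_2(\fC_8)$, where $\sum_{x\in S^G}[a_x,b_x]$ is a genuine birational invariant. Define a map on symbols by $[2,o]\mapsto 1$ and $[6,o]\mapsto -1$ for odd $o$, and send every other symbol to $0$. This respects the relations $[a,b]=[a,b-a]+[a-b,b]$ and $[a,a]=[a,0]$. It takes the value $2$ on $[2,3]+[2,7]$ and $-2$ on $[6,1]+[6,5]$, so it separates the two actions. At each $G$-fixed point the even-weight direction is tangent to $C$, so this functional records the same information as the paper's symbol.
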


\begin{proof}
Recall that  $S$ is of type III, $G$ is generated by $\sigma_{(0123)}\iota_2$. Then
$$
\Aut^G(S)=\langle G,\iota_0\iota_2\rangle\simeq\mathrm{OD16},
$$
which implies that $\bar{\beta}(\Aut^G(S))$ is a subgroup of order $2$ in $\mathrm{Out}(G)$. Recall that $\mathrm{Out}(\fC_8)\simeq \fC_2^2$. If $\phi\in\mathrm{Out}(G)$ sends $\sigma_{(0123)}\iota_2$ to $(\sigma_{(0123)}\iota_2)^3$  then $\phi\notin\bar{\beta}(\Aut^G(S))$. It suffices  to show that $\phi\notin\bar{\beta}(\Bir^G(S))$. This can be done using the Burnside formalism: Let $g$ be a generator of $G$ and 
consider the actions
$$
\varphi_1,\varphi_2:G\to \Aut(X),
$$
given by 
$$
\varphi_1(g)=\sigma_{(0123)}\iota_2 \quad \text{ and } \quad \varphi_2(g)=(\sigma_{(0123)}\iota_2)^3.
$$ 
Every $G$-action on $X$ is given by either $\varphi_1$ or $\varphi_2$, up to isomorphism. 
In each case, the $G$-action is in standard form: it is free on $X$ away from the elliptic curve 
$$
C:=X\cap\{x_4=0\},
$$
which has generic stabilizer $\fC_2=\langle \iota_4\rangle$
and residual $\fC_4$-action. 
We find incompressible symbols in the Burnside class of each action 
$$
[X\actsfromright\varphi_i(G)]^{\mathrm{inc}}=(\fC_2,\varphi_i'(\fC_4)\actsfromleft k(C),(1))\in \Burn^{\mathrm{inc}}_2(G), \quad i=1,2,
$$
where $\varphi_i': \fC_4\to \Aut(C)$ gives the corresponding residual action on $X$. Let $g'$ be a generator of $\fC_4$, then $\varphi_1'(g')=\varphi_2'(g')^3$. The $\fC_4$-action on $C$ fixes two points. We compute the Burnside classes of the $\fC_4$-action on $C$ and obtain 
$$
[C\actsfromright\varphi_1'(\fC_4)]=2(\fC_4,\mathrm{triv}\actsfromleft k, (1))+(\fC_2,\mathrm{triv}\actsfromleft k, (1))\in\Burn_1(\fC_4),
$$
$$
[C\actsfromright\varphi_2'(\fC_4)]=2(\fC_4,\mathrm{triv}\actsfromleft k, (3))+(\fC_2,\mathrm{triv}\actsfromleft k, (1))\in\Burn_1(\fC_4).
$$
Since there is no relation in $\Burn_1(\fC_4)$, we see that 
$$
[C\actsfromright\varphi_1'(\fC_4)]\ne [C\actsfromright\varphi_2'(\fC_4)],
$$
which implies that the actions of $\fC_4$ on $C$ given by $\varphi_1'$ and $\varphi_2'$ are not equivariantly birational. It follows that 
$$
[X \actsfromright \varphi_1(G)]\ne[X \actsfromright \varphi_2(G)].
$$
\end{proof}

\begin{rema}
One can also prove Lemma~\ref{lemm:dP4-C8} by describing Iskovskikh self-maps in $\Bir^G(S)$ and their images in $\bar{\beta}(\Bir^G(S))\subset\mathrm{Out}(G)$. In this case,  $G$ fixes two points in $S$, which are swapped by $\Aut^G(S)\simeq\mathrm{OD16}$, so we let $P$ be one of these points, and $Q$ the other. Consider the diagram \eqref{Vasya} for $\sigma$ being the blowup of $P$ and some $G$-orbit $\tilde{\Sigma}\subset\tilde{S}$. Then $G_\phi\simeq\mathfrak{C}_2$, where $G_\phi$ is the kernel of the $G$-action on $\mathbb{P}^1$ induced by $\phi$ in \eqref{Vasya} and ($\diamondsuit$) is equivalent to 
$$
\tilde{\Sigma}\subset \widetilde{S}^G_\phi,
$$
where $\widetilde{S}^G_\phi$ is the strict transform of the curve $\{x_4\}\subset S$. Then either $\tilde{\Sigma}$ is the $G$-fixed point that is mapped to $Q$, or $|\Sigma|=4$ (general case). In the former case, the Iskovskikh self-map $\chi$ is the $\Aut^G(S)$-equivariant Geiser involution induced by blowing up $S$ at $P$ and $Q$, so it centralizes $G$, and lies in the kernel of $\bar{\beta}\colon \Bir^G(S)\to\mathrm{Out}(G)$. Similarly, we can describe $\chi$ in the case when $|\Sigma|=4$ and show that its image in $\mathrm{Out}(G)$ is also trivial. 
\end{rema}

\begin{lemm}
\label{lemm:dP4-C3C4}
If $G\simeq \fC_3\rtimes \fC_4$ then 
$$
\bar{\beta}(\Aut^G(S))=\bar{\beta}(\Bir^G(S))=\mathrm{Out}(\fC_3\rtimes\fC_4)\simeq \fC_2.
$$
\end{lemm}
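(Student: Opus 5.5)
The plan is to prove the equality using automorphisms alone, with no birational input. We will show that $\bar{\beta}(\Aut^G(S))$ is already all of $\mathrm{Out}(G)$. Since $\bar{\beta}(\Aut^G(S))\subseteq\bar{\beta}(\Bir^G(S))\subseteq\mathrm{Out}(G)$, this forces both images to equal $\mathrm{Out}(G)$, and the Iskovskikh self-maps never need to be analysed.

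First, we compute $\mathrm{Out}(G)$ for $G=\fC_3\rtimes\fC_4=\langle c,d\mid c^3=d^4=1,\ dcd^{-1}=c^{-1}\rangle$. An automorphism must send $c\mapsto c^{\pm1}$ and $d$ to an element of order $4$ outside $\langle c\rangle\times\langle d^2\rangle$, i.e.\ $d\mapsto c^i d^{\pm1}$, so $|\Aut(G)|=12$. The center is $\langle d^2\rangle\simeq\fC_2$, so $|\mathrm{Inn}(G)|=6$ and $\mathrm{Out}(G)\simeq\fC_2$. A representative of the nontrivial outer class is $c\mapsto c$, $d\mapsto d^{-1}=d^3$: it is not inner, since any conjugate of $d$ has the form $c^jd$.

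Second, we work with the model from the table: $S$ is of type IV, with $G$ generated by $g=\sigma_{(012)}\tau$ of order $3$ and $h=\sigma_{(12)(34)}\iota_4$ of order $4$. The task is to find $\alpha\in\Aut(S)$ that normalizes $G$ and induces the outer class, i.e.\ $\alpha g\alpha^{-1}\in\langle g\rangle$ and $\alpha h\alpha^{-1}\in g^j h^{-1}$ for some $j$. The natural candidates are sign changes $\iota_i$, or products of them, which commute with $\sigma_{(012)}\tau$ precisely when they are invariant under the permutation $(012)$. One checks that $\iota_3$ conjugates $h=\sigma_{(12)(34)}\iota_4$ to $\sigma_{(12)(34)}\iota_3\iota_4\iota_3$ up to sign bookkeeping. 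Since $\sigma_{(12)(34)}$ swaps coordinates $3$ and $4$, we get $\iota_3h\iota_3=\sigma_{(12)(34)}\iota_4\iota_3\iota_3\cdot(\text{swap})$, which should equal $h\cdot\iota_3\iota_4$, and $h^2=\iota_3\iota_4$ on the relevant coordinates, so $\iota_3h\iota_3=h^3=h^{-1}$. We also need $\iota_3$ to commute with $g$; this holds because $\tau$ and $\sigma_{(012)}$ do not touch the sign of $x_3$. Hence $\iota_3\in\Aut^G(S)$ acts by $g\mapsto g$, $h\mapsto h^{-1}$, which is the nontrivial outer class. This matches the table entry $\fC_3\rtimes\fD_4=\langle G,\iota_4\rangle\subset\Aut(S)$, which normalizes $G$ because $G$ has index $2$ in it.

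The main obstacle is that final verification. Index $2$ makes the normalization automatic, so the real content is confirming that conjugation by $\iota_4$ (or $\iota_3$) is not inner on $G$, i.e.\ that it inverts the order-$4$ generator while centralizing the $\fC_3$. We will do this either by the explicit matrix computation above or by a Magma/GAP check that $\fC_3\rtimes\fD_4$ with GapID {\tt (24,8)} induces the full outer automorphism group on its subgroup {\tt (12,1)}. With that verified, $\bar{\beta}(\Aut^G(S))=\bar{\beta}(\Bir^G(S))=\mathrm{Out}(G)\simeq\fC_2$.
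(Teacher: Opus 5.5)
Your proposal is correct and follows the same route as the paper. The paper simply asserts that $\Aut^G(S)\simeq\fC_3\rtimes\fD_4$ surjects onto $\Out(G)\simeq\fC_2$, and then uses $\bar\beta(\Aut^G(S))\subseteq\bar\beta(\Bir^G(S))\subseteq\Out(G)$. Your explicit check supplies the verification the paper leaves to the reader: conjugation by $\iota_3$ (equivalently $\iota_4$) centralizes $\sigma_{(012)}\tau$ and inverts $h=\sigma_{(12)(34)}\iota_4$, since $\iota_3 h\iota_3=h\,\iota_3\iota_4=h^3$.
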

\begin{proof}
We have $\Aut^G(S)=\fC_3\rtimes\fD_4$ and $\Out(\fC_3\rtimes\fC_4)=\fC_2$. One can check that $\bar\alpha: \Aut^G(S)\to \Out(\fC_3\rtimes\fC_4)$ is surjective, and so is $\bar \beta$. 
\end{proof}
\appendix

\section{Tables}
\label{sect:tables}

\subsection*{Cubic surfaces}

Let $S\subset \bP^3$ be a smooth cubic surface and $G\subseteq\mathrm{Aut}(S)$ a subgroup such that $\rk\Pic(S)^G=1$. Then  $S$ is one of the following surfaces:

\begin{itemize}
\item[(I)] $\sum_{i=1}^4x_1^3=0$;
\item[(II)]$(\sum_{i=1}^4x_i^2)(\sum_{i=1}^4x_i)+\displaystyle{\sum_{1\leq i<j<k\leq4 }2x_ix_jx_k}=\sum_{i=1}^4x_i^3$;
\item[(III)]$x_1^3+x_2^3+x_3^3+x_4^3+6ax_2x_3x_4=0$, where $a=\frac{-1+\sqrt3}{2}$;
\item[(IV)]$x_1^3+x_2^3+x_3^3+x_4^3+6ax_2x_3x_4=0$, where $a\in k$ is general;
\item[(V)] $x_1^3+x_1(x_2^2+x_3^2+x_4^2)+ax_2x_3x_4=0$, where  $a\in k$ is general;
\item[(VI)]$x_1^3+x_2^3+x_3^3+x_4^3+ax_3x_4(x_1+x_2)=0$, where  $a\in k$ is general;
\item[(VIII)]$x_3^3 +x_4^3 +ax_3x_4(x_1 +bx_2)+x_1^3 +x_2^3=0$, where  $a,b\in k$ are general.
\end{itemize}

\

We introduce the actions: $\sigma_s$ is the permutation of coordinates via $s\in\fS_4$ and 
\begin{align*}
\tau_1&:\mathrm{diag}(1,1,\zeta_3^2,\zeta_3),\\
\tau_2&:\mathrm{diag}(1,-1,-1,1),\\
\tau_3&:\mathrm{diag}(\zeta_3,1,1,1),\\
\tau_4&:\mathrm{diag}(1,\zeta_3^2,\zeta_3,1),\\
\tau_5&:\mathrm{diag}(\zeta_3,\zeta_3^2,1,1),\\
\tau_6&:\mathrm{diag}(1,1,1,\zeta_3),\\
\tau_7&:\mathrm{diag}(\zeta_3,1,\zeta_3,1),\\
\alpha_1&:\mathsf{(x)}\mapsto(-x_1-x_2-x_3-x_4,x_1,x_2,x_3),\\
\alpha_2&:\mathsf{(x)}\mapsto{\sf(x)}\begin{pmatrix}
    b&0&0&0\\
    0&1&1&1\\
    0&1&\zeta_3&\zeta_3^2\\
    0&1&\zeta_3^2&\zeta_3
\end{pmatrix},\quad b^3=3\sqrt3. \\
\end{align*}

The table below lists all cases when $\rk\Pic(S)^G=1$.
\begin{longtable}{|c|c|c|c|c|c|c|}
\hline
   Type &$G$ &GapID& Generators\\\hline
I & $\fC_3$ & {\tt (3,1)}&$\tau_3$\\\hline
I & $\fS_3$ & {\tt (6,1)}&$\sigma_{(23)},\sigma_{(234)}$\\\hline
I & $\fC_6$ & {\tt (6,2)}&$\tau_3\sigma_{(34)}$\\\hline
I & $\fC_6$ & {\tt (6,2)}&$\tau_5\sigma_{(34)}$\\\hline
I & $\fS_3$ & {\tt (6,1)}&$\tau_1,\sigma_{(34)}$\\\hline
I & $\fC_3^2$ & {\tt (9,2)}&$\tau_1,\tau_3$\\\hline
I & $\fC_3^2$ & {\tt (9,2)}&$\tau_1,\tau_6$\\\hline
I & $\fC_3^2$ & {\tt (9,2)}&$\tau_1^2\sigma_{(234)},\tau_1\tau_4\sigma_{(234)}$\\\hline
I & $\fC_9$ & {\tt (9,1)}&$\tau_1\tau_3^2\tau_5\sigma_{(234)}$\\\hline
I & $\fD_6$ & {\tt (12,4)}&$\sigma_{(34)},\tau_1,\sigma_{(12)(34)}$\\\hline
I & $\fC_3\times\fS_3$ & {\tt (18,3)}&$\sigma_{(34)},\tau_1,\tau_{3}\tau_{6}$\\\hline
I & $\fC_3\times\fS_3$ & {\tt (18,3)}&$\tau_1,\tau_6,\sigma_{(34)}$\\\hline
I & $\fC_3\times\fS_3$ & {\tt (18,3)}&$\tau_1,\tau_3,\sigma_{(34)}$\\\hline
I & $\fC_3\times\fS_3$ & {\tt (18,3)}&$\tau_6,\tau_1\tau_4,\sigma_{(13)(24)}$\\\hline
I & $\fC_3\times \fC_6$ & {\tt (18,5)}&$\tau_3\tau_5^2,\tau_3\sigma_{(34)}$\\\hline
I & $\fC_3\times\fS_3$ & {\tt (18,3)}&$\tau_3,\sigma_{(34)},\tau_1\sigma_{(234)}$\\\hline
I & $\fC_3^3$ & {\tt (27,5)}&$\tau_1,\tau_3,\tau_6$\\\hline
I & $\mathrm{He_3}$ & {\tt (27,3)}&$\tau_1,\tau_3,\sigma_{(234)}$\\\hline
I & $\fC_9:\fC_3$ & {\tt (27,4)}&$\tau_1,\tau_3\tau_4\tau_5\tau_6^2\sigma_{(234)}$\\\hline
I & $\fS_4$ & {\tt (24,12)}&$\sigma_{(12)},\sigma_{(1234)}$\\\hline
I & $\fS_3^2$ & {\tt (36,10)}&$\tau_1,\tau_7,\sigma_{(12)},\sigma_{(34)}$\\\hline
I & $\fC_6\times\fS_3$ & {\tt (36,12)}&$\tau_1,\tau_3\tau_5,\sigma_{(12)},\sigma_{(34)}$\\\hline
I & $\fC_3\times \fC_3:\fS_3$ & {\tt (54,13)}&$\tau_1,\tau_4,\tau_7,\sigma_{(13)(24)}$\\\hline
I & $\fC_3^2:\fS_3$ & {\tt (54,8)}&$\tau_1,\tau_3,\sigma_{(23)},\sigma_{(234)}$\\\hline
I & $\fC_3^2\times\fS_3$ & {\tt (54,12)}&$\tau_1,\tau_3,\tau_6,\sigma_{(34)}$\\\hline
I & $\fC_3\wr \fC_3$ & {\tt (81,7)}&$\tau_1,\tau_3,\tau_6,\sigma_{(234)}$\\\hline
I & $\fS_3\wr \fC_2$ & {\tt (72,40)}&$\tau_1,\tau_7,\sigma_{(12)},\sigma_{(34)},\sigma_{(14)(23)}$\\\hline
I & $\fC_3^3:\fC_2^2$ & {\tt (108,40)}&$\tau_1,\tau_4,\tau_7,\sigma_{(12)(34)},\sigma_{(14)(23)}$\\\hline
I & $\fC_3^2:(\fC_3:\fC_4)$ & {\tt (108,37)}&$\tau_1,\tau_4,\tau_7,\sigma_{(12)(34)},\sigma_{(1324)}$\\\hline
I & $\fC_3\times\fS_3^2$ & {\tt (108,38)}&$\tau_1,\tau_4,\tau_7,\sigma_{(12)},\sigma_{(34)}$\\\hline
I & $\fC_3\wr \fS_3$ & {\tt (162,10)}&$\tau_1,\tau_3^2\tau_6,\tau_1\tau_4,\sigma_{(3,4)},\sigma_{(234)}$\\\hline
I & $\fS_3^2:\fS_3$ & {\tt (216,158)}&$\tau_3\tau_7\sigma_{(34)},\sigma_{(14)(23)}$\\\hline
I & $\fC_3^3:\fC_2^2:\fC_3$ & {\tt (324,160)}&$\tau_3\tau_7\sigma_{(234)},\sigma_{(14)(23)}$\\\hline
I & $\fC_3^3.\fS_4$ & {\tt (648,704)}&$\tau_3,\sigma_{(12)},\sigma_{(1234)}$\\\hline\hline
     II&$\fS_3$ &{\tt (6,1)}&$\sigma_{(23)},\sigma_{(234)}$\\\hline
      II&$\fC_6$ &{\tt (6,2)}&$\sigma_{14}\alpha_1$\\\hline
       II&$\fD_6$ &{\tt (12,4)}&$\sigma_{(23)},\sigma_{(234)},\sigma_{14}\alpha_1$\\\hline
     II&$\fS_4$ &{\tt (24,12)}&$\sigma_{1234},\sigma_{(12)}$\\\hline
     II&$\fS_5$ &{\tt (120,34)}&$\alpha_1,\sigma_{(12)}$\\\hline
      III&$\fC_3$ &{\tt (3,1)}&$\tau_3$\\\hline
    III&$\fS_3$ &{\tt (6,1)}&$\sigma_{(34)},\tau_1$\\\hline
     III&$\fS_3$ &{\tt (6,1)}&$\tau_1\sigma_{(34)},\tau_1\sigma_{(234)}$\\\hline
        III&$\fC_6$ &{\tt (6,2)}&$\tau_1\tau_3\sigma_{(34)}$\\\hline
       III&$\fC_3^2$ &{\tt (9,2)}&$\tau_3,\tau_1$\\\hline
       III&$\fC_3^2$ &{\tt (9,2)}&$\tau_3,\tau_1\sigma_{(234)}$\\\hline
   III&$\fC_{12}$ &{\tt (12,2)}&$\alpha_2$\\\hline
   III&$\fC_3\times\fS_3$ &{\tt (18,3)}&$\tau_1,\tau_3,\sigma_{(34)}$\\\hline
    III&$\fC_3\times\fS_3$ &{\tt (18,3)}&$\tau_3,\tau_1^2\sigma_{(234)},\tau_1\sigma_{(34)}$\\\hline
 III&$\mathrm{He}_3$ &{\tt (27,3)}&$\tau_1,\tau_3,\sigma_{(234)}$\\\hline
     III&$\fC_3^2\rtimes\fS_3$ &{\tt (54,8)}&$\tau_1,\tau_3,\sigma_{(234)},\sigma_{(23)}$\\\hline
     III&$\fC_3^2\rtimes\fS_3.\fC_2$ &{\tt (108,15)}&$\tau_1,\tau_3,\sigma_{(234)},\sigma_{(23)},\alpha_2$\\\hline\hline
      IV&$\fC_3$ &{\tt (3,1)}&$\tau_3$\\\hline
    IV&$\fS_3$ &{\tt (6,1)}&$\sigma_{(34)}, \tau_1$\\\hline
      IV&$\fS_3$ &{\tt (6,1)}&$\sigma_{(34)},\sigma_{(234)}$\\\hline
      IV&$\fS_3$ &{\tt (6,1)}&$
    \tau_1\sigma_{(34)},\tau_1\sigma_{(234)}$\\\hline
     IV&$\fS_3$ &{\tt (6,1)}&$\tau_1^2\sigma_{(34)},\tau_1^2\sigma_{(234)}$\\\hline
        IV&$\fC_6$ &{\tt (6,2)}&$\tau_1\tau_3\sigma_{(34)}$\\\hline
    IV&$\fC_3^2$ &{\tt (9,2)}&$\tau_3,\tau_1$\\\hline
        IV&$\fC_3^2$ &{\tt (9,2)}&$\tau_3,\tau_1\sigma_{(234)}$\\\hline
       IV&$\fC_3^2$ &{\tt (9,2)}&$\tau_4^2\sigma_{(234)},\tau_1^2\sigma_{(234)}$\\\hline
       IV&$\fC_3^2$ &{\tt (9,2)}&$\tau_3,\sigma_{(234)}$\\\hline
   IV&$\fC_3\times\fS_3$ &{\tt (18,3)}&$\tau_1,\tau_3,\sigma_{(34)}$\\\hline
 IV&$\fC_3\times\fS_3$ &{\tt (18,3)}&$\tau_3,\sigma_{(234)},\tau_1\sigma_{(34)}$\\\hline
      IV&$\fC_3\times\fS_3$ &{\tt (18,3)}&$\tau_3,\tau_1^2\sigma_{(234)},\tau_1\sigma_{(34)}$\\\hline
  IV&$\fC_3\times\fS_3$ &{\tt (18,3)}&$\tau_3,\tau_4\sigma_{(234)},\tau_1\sigma_{(34)}$\\\hline
   IV&$\mathrm{He}_3$ &{\tt (27,3)}&$\tau_1,\tau_3,\sigma_{(234)}$\\\hline
     IV&$\fC_3^2\rtimes\fS_3$ &{\tt (54,8)}&$\tau_1,\tau_3,\sigma_{(234)},\sigma_{(23)}$\\\hline\hline
       V&$\fS_3$&{\tt (6,1)}&$\sigma_{(234)}, \sigma_{(23)}$\\\hline
    V&$\fS_4$ &{\tt (24,12)}&$\sigma_{(234)}, \sigma_{(23)},\tau_2$\\\hline\hline
       VI&$\fC_6$ &{\tt (6,2)}&$\sigma_{(12)}\tau_1$\\\hline
       VI&$\fS_3$ &{\tt (6,1)}&$\sigma_{(34)}, \tau_1$\\\hline
          VI&$\fD_6$ &{\tt (12,4)}&$\sigma_{(12)}\tau_1,\sigma_{(23)}$\\\hline\hline
     VIII&$\fS_3$ &{\tt (6,1)}&$\sigma_{(34)}, \tau_1$\\\hline
\end{longtable}

\subsection*{Del Pezzo surfaces of degree 2}

Let $S\subset\bP(2_{x_0},1_{x_1},1_{x_2},1_{x_3})$ be a del Pezzo surface of degree 2 and $G\subseteq\Aut(S)$ a subgroup such that $\rk\Pic(S)^G=1$. Then $S$ is one of the following surfaces:
\begin{itemize}
    \item[(I)] $x_0^2=x_1^3x_2 + x_1x_3^3 + x_2^3x_3$;
    \item[(II)]$x_0^2=x_1^4+x_2^4+x_3^4$;
    \item[(III)] $x_0^2=x_1^4+x_2^4+x_3^4+(4\zeta_3+2)x_1^2x_2^2$;
    \item[(IV)] $x_0^2=x_1^4+x_2^4+x_3^4-a(x_1^2x_2^2+x_1^2x_3^2+x_2^2x_3^2)$, where $a\in k$ is general;
    \item[(V)] $x_0^2=x_1^4+x_2^4+x_3^4+ax_1^2x_2^2$, where $a\in k$ is general;
    \item[(VII)] $x_0^2= x_1^4 + x_2^4 + x_3^4 + ax_1^2x_2^2 + bx_3^2x_1x_2$, where $a,b\in k$ are general;
    \item[(VIII)]$x_0^2=x_3^3x_1 + x_1^4 + x_2^4 + ax_1^2x_2^2$, where $a\in k$ is general.
\end{itemize}

Put
\begin{align*}
\iota&:(x_0,x_1,x_2,x_3)\mapsto (-x_0,x_1,x_2,x_3),\\
    \sigma_1&:(x_0,x_1,x_2,x_3)\mapsto (x_0,x_1,x_2,x_3)\cdot\begin{pmatrix}
   1&0&0&0\\
   0&a_1&a_2&a_3\\
   0&a_4&a_5&a_6\\
   0&a_7&a_8&a_9
\end{pmatrix},\\
\sigma_2&:(x_0,x_1,x_2,x_3)\mapsto(x_0,x_1,x_2,x_3)\cdot\begin{pmatrix}
   1&0&0&0\\
   0&b_1&b_2&b_3\\
   0&b_4&b_5&b_6\\
   0&b_7&b_8&b_9
\end{pmatrix},\\
\sigma_3&:(x_0,x_1,x_2,x_3)\mapsto (x_0,x_2,x_3,x_1),\\
\sigma_4&:(x_0,x_1,x_2,x_3)\mapsto (x_0,-\zeta_4x_1,\zeta_4x_3,x_2),\\
\sigma_5&:(x_0,x_1,x_2,x_3)\mapsto (x_0,x_1,x_2,x_3)\cdot\begin{pmatrix}
    \zeta_6&0&0&0\\
    0&\frac{1+\zeta_4}{2}&\frac{-1+\zeta_4}{2}&0\\
    0&\frac{1+\zeta_4}{2}&\frac{1-\zeta_4}{2}&0\\
    0&0&0&\zeta_3
\end{pmatrix},\\
\sigma_6&:(x_0,x_1,x_2,x_3)\mapsto (x_0,x_1,x_2,x_3)\cdot\begin{pmatrix}
    \zeta_3&0&0&0\\
    0&\frac{1+\zeta_4}{2}&\frac{-1-\zeta_4}{2}&0\\
    0&\frac{-1+\zeta_4}{2}&\frac{-1+
\zeta_4}{2}&0\\
    0&0&0&\zeta_3^2
\end{pmatrix},\\
\sigma_7&:(x_0,x_1,x_2,x_3)\mapsto (x_0,x_2,-x_1,x_3),\\
\sigma_8&:(x_0,x_1,x_2,x_3)\mapsto (x_0,x_1,x_3,x_2),\\
\sigma_9&:(x_0,x_1,x_2,x_3)\mapsto (x_0,x_1,\zeta_4x_2,\zeta_4^3x_3),\\
\sigma_{10}&:(x_0,x_1,x_2,x_3)\mapsto (x_0,-x_1,x_2,x_3),\\
\sigma_{11}&:(x_0,x_1,x_2,x_3)\mapsto (x_0,\zeta_4x_1,\zeta_4^3x_2,x_3),\\
\sigma_{12}&:(-x_0,x_1,-x_2,\zeta_3x_3)\mapsto (x_0,x_1,\zeta_3x_2,-x_3),
\end{align*}
where 
\begin{align*}
   a_1&=\frac17(3\zeta_7^5 + \zeta_7^4 + \zeta_7^3 + 3\zeta_7^2 - 1),\quad
a_2=\frac17(-\zeta_7^5 + 2\zeta_7^4 + 2\zeta_7^3 - \zeta_7^2 - 2),\\
    a_3&=\frac17(-2\zeta_7^5 - 3\zeta_7^4 - 3\zeta_7^3 - 2\zeta_7^2 - 4),\quad
    a_4=\frac17(-\zeta_7^5 + 2\zeta_7^4 + 2\zeta_7^3 - \zeta_7^2 - 2),\\
    a_5&=\frac17(-2\zeta_7^5 - 3\zeta_7^4 - 3\zeta_7^3 - 2\zeta_7^2 - 4),\quad
    a_6=\frac17(3\zeta_7^5 + \zeta_7^4 + \zeta_7^3 + 3\zeta_7^2 - 1),\\
    a_7&=\frac17(-2\zeta_7^5 - 3\zeta_7^4 - 3\zeta_7^3 - 2\zeta_7^2 - 4),\quad
    a_8=\frac17
(3\zeta_7^5 + \zeta_7^4 + \zeta_7^3 + 3\zeta_7^2 - 1),\\
    a_9&
=\frac17(-\zeta_7^5 + 2\zeta_7^4 + 2\zeta_7^3 - \zeta_7^2 - 2),\quad 
    b_1=\frac17(2\zeta_7^4 - \zeta_7^3 - 2\zeta_7^2 - \zeta_7 + 2),\\
    b_2&=\frac17(-2\zeta_7^5 - \zeta_7^3 + 2\zeta_7^2 + 2\zeta_7 - 1),\quad
    b_3=\frac17(\zeta_7^5 + 3\zeta_7^4 - \zeta_7^3 + 3\zeta_7^2 + \zeta_7),\\
    b_4&=\frac17(-2\zeta_7^5 - \zeta_7^3 + 2\zeta_7^2 + 2\zeta_7 - 1),\quad
    b_5=\frac17(-\zeta_7^5 - \zeta_7^4 + 2\zeta_7^2 - 2\zeta_7 + 2),\\
    b_6&=\frac17(\zeta_7^5 + 4\zeta_7^4 + 2\zeta_7^3 + 2\zeta_7^2 + 4\zeta_7 + 1),\quad
    b_7=\frac17(\zeta_7^5 + 3\zeta_7^4 - \zeta_7^3 + 3\zeta_7^2 + \zeta_7),\\
    b_8&=\frac17(\zeta_7^5 + 4\zeta_7^4 + 2\zeta_7^3 + 2\zeta_7^2 + 4\zeta_7 + 1),\quad
    b_9=\frac17(\zeta_7^5 - \zeta_7^4 + \zeta_7^3 + 3\zeta_7 + 3).
\end{align*}

The table below lists all cases when $\rk\Pic(S)^G=1$.
\begin{longtable}{|c|c|c|c|}
\hline
   Type &$G$ &GapID& Generators\\\hline
I&$\fD_4$&{\tt(8,3)}&$ \sigma_2^2\sigma_1\sigma_2\sigma_1\sigma_2^2, \sigma_2^3\sigma_1\sigma_2\sigma_1\sigma_2^3
\sigma_1\sigma_2 $\\\hline
I&$\fS_4$&{\tt(24,12)}&$ \sigma_2^3\sigma_1\sigma_2, \sigma_2^2\sigma_1\sigma_2\sigma_1\sigma_2^2 $\\\hline
I&$\fS_4$&{\tt(24,12)}&$ \sigma_1\sigma_2\sigma_1, \sigma_2^3\sigma_1\sigma_2 $\\\hline
I&$\PSL_2(\bF_7)$&{\tt(168,42)}&$ \sigma_2, \sigma_1 $\\\hline\hline
II&$\fC_4$&{\tt(4,1)}&$ \iota\sigma_3\sigma_4^{-1}\sigma_3\sigma_4^{-1} $\\\hline
II&$\fC_2\times\fC_4$&{\tt(8,2)}&$ \sigma_4^4, \iota\sigma_3^{-1}\sigma_4^{-2}\sigma_3 $\\\hline
II&$\fC_2\times\fC_4$&{\tt(8,2)}&$ \iota\sigma_3^{-1}\sigma_4^{-2}\sigma_3, \iota\sigma_4^4 $\\\hline
II&$Q8$&{\tt(8,4)}&$ \sigma_4^3\sigma_3^{-1}, \sigma_4\sigma_3^{-1}\sigma_4^2 $\\\hline
II&$\fD_4$&{\tt(8,3)}&$ \sigma_3\sigma_4, \sigma_4\sigma_3^{-1}\sigma_4^{-2} $\\\hline
II&$\fC_2\times\fC_4$&{\tt(8,2)}&$ \sigma_4^3\sigma_3^{-1}, \iota\sigma_3^{-1}\sigma_4^{-2}\sigma_3 $\\\hline
II&$\fD_4$&{\tt(8,3)}&$ \sigma_3\sigma_4, \sigma_3\sigma_4^{-3} $\\\hline
II&$\fC_2\times\fC_4$&{\tt(8,2)}&$ \sigma_3\sigma_4, \iota\sigma_3^{-1}\sigma_4^{-2}\sigma_3 $\\\hline
II&$OD16$&{\tt(16,6)}&$ \sigma_3\sigma_4^{-1}, \sigma_3\sigma_4^3 $\\\hline
II&$\fC_4^2$&{\tt(16,2)}&$ \iota\sigma_4^2, (\sigma_3^{-1}\sigma_4^{-1}\sigma_3)^2 $\\\hline
II&$\fD_4\rtimes\fC_2$&{\tt(16,13)}&$ \sigma_3\sigma_4, \sigma_4\sigma_3^{-1}\sigma_4^{-2}, \iota\sigma_4^3\sigma_3^{-1} $\\\hline
II&$\fD_4\rtimes\fC_2$&{\tt(16,13)}&$ \iota\sigma_3\sigma_4, \sigma_4^3\sigma_3^{-1}, \sigma_4^{-2}\sigma_3\sigma_4^{-1} $\\\hline
II&$\fD_4\rtimes\fC_2$&{\tt(16,13)}&$ \sigma_3\sigma_4, (\sigma_3\sigma_4^{-1})^2, \sigma_4\sigma_3^{-1}\sigma_4^{-2} $\\\hline
II&$\fD_4\rtimes\fC_2$&{\tt(16,13)}&$ \sigma_3\sigma_4, \sigma_4^4, \iota\sigma_3^{-1}\sigma_4^{-2}\sigma_3 $\\\hline
II&$\fS_4$&{\tt(24,12)}&$ \sigma_3^{-1}\sigma_4^{-1}, \sigma_3^{-1}\sigma_4^2\sigma_3^{-1} $\\\hline
II&$\fC_4\wr\fC_2$&{\tt(32,11)}&$ \sigma_3\sigma_4, \iota\sigma_4^2 $\\\hline
II&$\fC_4\wr\fC_2$&{\tt(32,11)}&$ \sigma_3\sigma_4, \sigma_3\sigma_4^{-1} $\\\hline
II&$\fC_4\wr\fC_2$&{\tt(32,11)}&$ \sigma_3\sigma_4^{-1}, \iota\sigma_3\sigma_4 $\\\hline
II&$\fC_4^2\rtimes\fC_3\rtimes\fC_2$&{\tt(96,64)}&$ \sigma_3, \sigma_4^{-1} $\\\hline\hline
III&$\fC_4$&{\tt(4,1)}&$ \iota\sigma_6^{-3} $\\\hline
III&$\fC_6$&{\tt(6,2)}&$ \iota\sigma_6^{-2} $\\\hline
III&$Q8$&{\tt(8,4)}&$ \iota\sigma_6^2\sigma_5^{-1}, \iota\sigma_5^{-1}\sigma_6^2 $\\\hline
III&$\fC_2\times\fC_4$&{\tt(8,2)}&$ \sigma_5\sigma_6^{-1}\sigma_5, \sigma_6\sigma_5^{-1}\sigma_6 $\\\hline
III&$\fC_2\times\fC_4$&{\tt(8,2)}&$ \iota\sigma_6^{-1}\sigma_5\sigma_6^{-1}, \sigma_5^2\sigma_6\sigma_5^{-1} $\\\hline
III&$\fD_4$&{\tt(8,3)}&$ \sigma_5^2\sigma_6^{-1}, \sigma_5^{-2}\sigma_6 $\\\hline
III&$\fC_{12}$&{\tt(12,2)}&$ \iota\sigma_6^{-1} $\\\hline
III&$\fD_4\rtimes\fC_2$&{\tt(16,13)}&$ \sigma_5^{-1}\sigma_6^{-1}, \sigma_5\sigma_6, \iota\sigma_6^2\sigma_5^{-1} $\\\hline
III&$\fD_4\rtimes\fC_2$&{\tt(16,13)}&$ \sigma_6^{-3}, \sigma_5^2\sigma_6^{-1}, \sigma_5^{-2}\sigma_6 $\\\hline
III&$\fD_4\rtimes\fC_2$&{\tt(16,13)}&$ \sigma_6^{-2}\sigma_5, \iota\sigma_5^{-1}\sigma_6^{-1}, \sigma_6^2\sigma_5^{-1} $\\\hline
III&$\SL_2(\bF_3)$&{\tt(24,3)}&$ \iota\sigma_5^{-1}, \sigma_6^2 $\\\hline
III&$\SL_2(\bF_3)\rtimes\fC_2$&{\tt(48,33)}&$ \sigma_6, \iota\sigma_5 $\\\hline
III&$\SL_2(\bF_3)\rtimes\fC_2$&{\tt(48,33)}&$ \sigma_5^{-1}\sigma_6, \sigma_5^{-1}\sigma_6^{-1} $\\\hline\hline
IV&$\fD_4$&{\tt(8,3)}&$ \sigma_3  \sigma_7, \sigma_3  \sigma_7^{-1} $\\\hline
IV&$\fS_4$&{\tt(24,12)}&$\sigma_3, \sigma_7 $\\\hline\hline
V&$\fC_4$&{\tt(4,1)}&$ \sigma_{10}\iota\sigma_{11} $\\\hline
V&$\fD_4$&{\tt(8,3)}&$ \sigma_{11}^{-1}, \sigma_3^{-1}\sigma_{10} $\\\hline
V&$Q8$&{\tt(8,4)}&$ \sigma_3^{-1}, \sigma_{11}^{-1} $\\\hline
V&$\fD_4$&{\tt(8,3)}&$ \sigma_3^{-1}, \sigma_{10} $\\\hline
V&$\fC_2\times\fC_4$&{\tt(8,2)}&$ \sigma_3\iota, \sigma_{10}\iota\sigma_{11}^{-1} $\\\hline
V&$\fD_4$&{\tt(8,3)}&$ \sigma_{10}, \sigma_3\sigma_{11} $\\\hline
V&$\fC_2\times\fC_4$&{\tt(8,2)}&$ \sigma_{11}^{-1}, \sigma_{10}\iota $\\\hline
V&$\fC_2\times\fC_4$&{\tt(8,2)}&$ \sigma_3^{-1}, \sigma_{10}\iota\sigma_{11}^{-1} $\\\hline
V&$\fC_2\times\fC_4$&{\tt(8,2)}&$ \sigma_3\sigma_{11}^{-1}, \sigma_{10}\iota\sigma_{11}^{-1} $\\\hline
V&$\fC_2\times\fC_4$&{\tt(8,2)}&$ \sigma_3\sigma_{10}, \sigma_{10}\iota\sigma_{11}^{-1} $\\\hline
V&$\fC_2\times\fC_4$&{\tt(8,2)}&$ \sigma_{10}, \iota\sigma_{11} $\\\hline
V&$\fD_4\rtimes\fC_2$&{\tt(16,13)}&$ \sigma_{11}^{-1}, \sigma_{10}, \sigma_3^{-1} $\\\hline
V&$\fD_4\rtimes\fC_2$&{\tt(16,13)}&$ \sigma_{11}, \sigma_{10}\iota, \sigma_3^{-1}\sigma_{10} $\\\hline
V&$\fD_4\rtimes\fC_2$&{\tt(16,13)}&$ \sigma_3, \sigma_{11}^{-1}, \sigma_{10}\iota $\\\hline
V&$\fD_4\rtimes\fC_2$&{\tt(16,13)}&$ \sigma_{10}, \sigma_3\iota, \sigma_3\sigma_{11}^{-1} $\\\hline
V&$\fD_4\rtimes\fC_2$&{\tt(16,13)}&$ \sigma_{10}, \sigma_3^{-1}, \iota\sigma_{11}^{-1} $
\\\hline\hline
VII&$\fD_4$&{\tt(8,3)}&$ \sigma_8,\sigma_9$\\\hline\hline
VIII&$\fC_6$&{\tt(6,2)}&$ \sigma_{12}$\\\hline
 \end{longtable}

\subsection*{Del Pezzo surfaces of degree $1$} 
Let $S\subset\bP(1_{x_0},1_{x_1},2_{x_2},3_{x_3})$ be a del Pezzo surface of degree $1$. Then  
$$
S=\{x_3^2 + x_2^3 + F_4(x_0, x_1)x_2 + F_6(x_0, x_1) = 0\},
$$
where $F_4$ and $F_6$ are homogeneous polynomials of degree $4$ and $6$, respectively. Put 
\begin{align*}
  \gamma&: (x_0,x_1,x_2,x_3)\mapsto (x_0,x_1,x_2,-x_3),
\\
\delta&: (x_0,x_1,x_2,x_3)\mapsto (x_0,x_1,\zeta_3x_2,x_3). 
\end{align*}

\begin{lemm}
    Let $S$ be a del Pezzo surface of degree 1, $G\subseteq \Aut(S)$, with $\rk\Pic(S)^G=1$. Then one of the following holds: 
    \begin{enumerate}
        \item $G$ contains $\gamma, \delta$ or $\gamma\delta$,
        \item $G\simeq C_5$ is generated by 
        $$
        (x_0,x_1,x_2,x_3)\mapsto (x_0,\zeta_5x_1,x_2,x_3),
        $$
        and 
        $$
        S=\{x_3^2 + x_2^3 + ax_0^4x_2 + x_0(bx_0^5 + x_1^5)=0\},
        $$
        for some $a,b\in k$,
        \item $G\simeq C_6$ is generated by 
        $$
        (x_0,x_1,x_2,x_3)\mapsto (x_0,\zeta_6 x_1,x_2,x_3),
        $$
        and
        $$
        S=\{x_3^2 + x_2^3 + ax_0^4x_2 + x_0^6 + bx_1^6=0\},
        $$
        for some $a,b\in k$.
       \end{enumerate}
\end{lemm}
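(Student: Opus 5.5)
The plan is to use the structure of $\Aut(S)$ for a degree-$1$ del Pezzo surface. The anticanonical linear system $|-K_S|$ gives the projection $S\dashrightarrow\bP^1_{x_0,x_1}$, and $|-2K_S|$ gives the double cover $S\to Q\subset\bP(1,1,2)$ with Bertini involution $\gamma$. The surface $S$ is a $\fC_2$-cover of the quadric cone, branched over a curve of genus $4$.

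We get an exact sequence
$$
1\to K\to \Aut(S)\to \PGL_2,
$$
where the kernel $K$ acts trivially on $[x_0:x_1]$. Rescaling gives $K\subseteq\langle\gamma,\delta\rangle\simeq\fC_6$, and every nontrivial element of $\langle\gamma,\delta\rangle$ is $\gamma^{\pm1}$, $\delta^{\pm1}$ or $(\gamma\delta)^{\pm1}$. So if $G\cap K\neq 1$, then $G$ contains one of $\gamma$, $\delta$, $\gamma\delta$ (or an inverse, generating the same subgroup), which is alternative (1). It remains to treat $G\cap K=1$, that is, $G$ embeds in $\PGL_2$ via its action on $[x_0:x_1]$.

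In that case the key input is the Lefschetz fixed point formula together with $\rk\Pic(S)^G=1$. Since $\rk\Pic(S)=9$ and the invariant part has rank $1$, we have
$$
\frac{1}{|G|}\sum_{g\in G}\mathrm{tr}(g\,|\,\Pic(S))=1,
$$
and $\mathrm{tr}(g)=\chi(S^g)-2$. We also use that $G$ fixes the base point $O=[0:0:1:1]$ of $|-K_S|$, and that $T_OS$ is identified with the tangent line to $\bP^1$ data. I would first handle cyclic $G=\langle g\rangle$, with $g$ acting on $\bP^1$ with two fixed points. After diagonalizing, $g$ acts as
$$
(x_0,x_1,x_2,x_3)\mapsto(x_0,\epsilon x_1,\lambda^2 x_2,\lambda^3x_3)
$$
with $\lambda$ chosen so that $g\notin K$ after composing with a suitable element of $\langle\gamma,\delta\rangle$. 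Normalizing $\lambda=1$ (we may absorb it into $K$ only if it lies in $G$, so really $\lambda$ is determined by $g$), the invariance of the equation forces $F_4,F_6$ to be eigen-polynomials. Computing $S^{g^k}$ (isolated points over $x_0x_1=0$ plus possible curves) and the Lefschetz sum then shows that $\rk\Pic(S)^G=1$ is possible only when $\mathrm{ord}(g)\in\{5,6\}$ with $\lambda=1$. Otherwise the invariant Picard rank is $\ge2$, for instance because a pencil member or fiber structure is preserved, or because the trace sum exceeds $|G|$. The surviving cases give the normal forms in (2) and (3): for $\epsilon=\zeta_5$ the invariant sextics $F_6$ are $bx_0^6+x_0x_1^5$, and for $\epsilon=\zeta_6$ they are $x_0^6+bx_1^6$, while $F_4=ax_0^4$ in both cases.

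Finally, for noncyclic $G\subset\PGL_2$ with $G\cap K=1$ (so $G$ is dihedral, $\fA_4$, $\fS_4$ or $\fA_5$), I would show via the Dolgachev–Iskovskikh list \cite{DI} of automorphism groups of degree-$1$ del Pezzo surfaces that such $G$ either contains an element whose lift forces a nontrivial intersection with $K$, or has $\rk\Pic(S)^G>1$. Equivalently, every group on the table of \cite[Section 6.6]{DI} with $\rk\Pic^G=1$ and $G\cap\langle\gamma,\delta\rangle=1$ is cyclic. The main obstacle is this bookkeeping of lifts: an element of $\PGL_2$ lifts to $\Aut(S)$ only up to $K$, and deciding which lift lies in $G$ while computing $\rk\Pic^G$ via the traces $\chi(S^g)-2$ is where I expect the case analysis to be delicate.
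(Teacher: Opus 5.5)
Your reduction is right: in the normal form, every automorphism acting trivially on $|-K_S|$ lies in $\langle\gamma,\delta\rangle\simeq\fC_6$, so $G\cap K\neq 1$ gives (1). The paper states this lemma without proof, so I am measuring your plan against what a complete argument needs. There is a genuine gap: everything after this reduction, which is the entire content of (2) and (3), is asserted rather than proved.

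In the cyclic case:
\begin{itemize}
\item You give no bound on $m=\mathrm{ord}(g)$, so ``computing $S^{g^k}$ and the Lefschetz sum'' is a case analysis over infinitely many $m$.
\item You leave open which lift occurs. You cannot alter $g$ by elements of $K$ without changing $G$; the actual constraint is $\lambda^m=1$. For a given $m$ several lifts survive, so this is a real case distinction.
\item The exclusion criterion you suggest, that a preserved pencil member forces $\rk\Pic(S)^G\ge 2$, is false. Every cyclic $G$ with $G\cap K=1$ fixes $O$ and preserves the fibres over the two fixed points of $\bar g$. This includes the groups in (2) and (3), which have $\rk\Pic(S)^G=1$.
\end{itemize}
In the non-cyclic case you defer to the list in \cite{DI}, which the paper notes contains inaccuracies, and your ``equivalently'' is a restatement, not an argument. (Minor: the base point is $[0:0:-1:1]$, not $[0:0:1:1]$.)

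Here is one way to fill the gap.

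\emph{Setup.} Rescaling $x_2,x_3$ identifies $\Aut(S)$ with $\{A\in\GL_2 : F_4\circ A=F_4,\ F_6\circ A=F_6\}$, and $K$ with its scalar matrices.

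\emph{Group reduction.} If $G\cap K=1$, then $G\cong\bar G\subset\PGL_2$. Moreover $\bar G$ contains no $\fC_2^2$: commuting non-scalar involutions of $\GL_2$ are simultaneously $\pm\mathrm{diag}(1,-1)$ and so have the same image. Hence $G$ is cyclic or $\fD_n$ with $n$ odd, and no table is needed.

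\emph{Cyclic case.} Let $G=\langle g\rangle$ with $g=\mathrm{diag}(\mu_0,\mu_1)$, $\mu_0^m=\mu_1^m=1$ and $\mu_1/\mu_0$ primitive.
\begin{itemize}
\item A nontrivial element fixes $O$, at most one (irreducible) fibre, and isolated points, so its trace is $\ge -2$. Then $m=9+\sum_{j=1}^{m-1}\mathrm{tr}(g^j)$ forces $m\ge 4$.
\item Since $S$ is smooth, all fibres are irreducible, so $\Delta=4F_4^3+27F_6^2$ vanishes to order at most $2$ at every point. The $s\in[8,12]$ roots (with multiplicity) away from the two fixed points fall into free orbits, whence $m\mid s$.
\item This leaves $m\in\{4,5,6,8,9,10,11,12\}$, each to be checked for all lifts. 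For example, for $m=6$ the lifts $\mathrm{diag}(\zeta_6,\zeta_3)$ and $\mathrm{diag}(\zeta_3,-1)$ give rank $3$, and only $\mathrm{diag}(1,\zeta_6)$ gives rank $1$.
\end{itemize}

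\emph{Dihedral case.} For $\fD_n$ with $n$ odd, each involution is conjugate to $\mathrm{diag}(1,-1)$. Invariance makes the vanishing orders of $\Delta$ and $F_6$ at its two fixed points even, so both fixed fibres are smooth and the involution has trace $1$. This gives $\rk\Pic(S)^{\fD_n}=\frac12\bigl(\rk\Pic(S)^{\fC_n}+1\bigr)$, which forces case (2) with $n=5$. There an involution would have to swap the smooth fibre $\{x_1=0\}$ with the cuspidal fibre $\{x_0=0\}$, which is impossible.
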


Additionally, we list all cases when $\rk\Pic^G(S)=1$, with generators, for $S$ with maximal automorphism groups. Such $S$ are of types: 
\begin{itemize}
    \item [(I)] $x_3^2+x_2^3+x_0x_1(x_0^4-x_1^4)=0$;
    \item [(II)] $x_3^2+x_2^3+x_0^6+x_1^6=0$;
    \item [(IV)]$x_3^2+x_2^3+x_0(x_0^5+x_1^5)=0$;
    \item [(VII)]$x_3^2+x_2^3+x_2x_0^4+x_1^6=0$;
    \item [(XV)]$x_3^2+x_2^3+x_2(ax_0^4+x_1^4)+x_0^2(bx_0^4+cx_1^4)=0$, where $a,b,c\in k$ are general. 
\end{itemize}

Put
\begin{align*}
    \sigma_1&:(x_0,x_1,x_2,x_3)\mapsto(x_0,x_1,\zeta_3x_2,-x_3),\\
    \sigma_2&:(x_0,x_1,x_2,x_3)\mapsto(\zeta_8^7(x_0+x_1),\zeta_8^5x_0+\zeta_8x_1,2\zeta_3x_2,2\sqrt{2}x_3),\\
    \sigma_3&:(x_0,x_1,x_2,x_3)\mapsto(-\zeta_8x_1,\zeta_8^7x_0,-x_2,\zeta_4x_3),\\
       \sigma_4&:(x_0,x_1,x_2,x_3)\mapsto(\zeta_8x_0,\zeta_8^7x_1,-x_2,\zeta_4x_3),\\
       \sigma_5&:(x_0,x_1,x_2,x_3)\mapsto(\zeta_{12}x_0,\zeta_{12}^{11}x_1,-x_2,\zeta_4x_3),\\
         \sigma_6&:(x_0,x_1,x_2,x_3)\mapsto(x_1,x_0,x_2,x_3),\\
          \sigma_7&:(x_0,x_1,x_2,x_3)\mapsto(\zeta_{10}x_0,\zeta_{10}^{9}x_1,\zeta_5x_2,\zeta_{10}^3x_3),\\
           \sigma_8&:(x_0,x_1,x_2,x_3)\mapsto(\zeta_{24}x_0,\zeta_{24}^{23}x_1,\zeta_{24}^{14}x_2,\zeta_{8}^{7}x_3),\\
           \sigma_9&:(x_0,x_1,x_2,x_3)\mapsto(\zeta_{8}x_0,\zeta_{8}^{7}x_1,\zeta_{4}x_2,\zeta_{8}^{3}x_3).
\end{align*}

The table below lists all cases when $\rk\Pic(S)^G=1$.
\begin{longtable}{|c|c|c|c|}
\hline  
Type &$G$ &GapID& Generators\\\hline
I&$\fC_2$&{\tt(2,1)}&{$\sigma_1^3$}\\\hline
I&$\fC_3$&{\tt(3,1)}&{$\sigma_1^4$}\\\hline
I&$\fC_6$&{\tt(6,2)}&{$\sigma_1^5$}\\\hline
I&$\fC_4$&{\tt(4,1)}&{$\sigma_4^2$}\\\hline
I&$\fC_{12}$&{\tt(12,2)}&{$\sigma_4^2\sigma_1^5$}\\\hline
I&$\fC_2^2$&{\tt(4,2)}&{$\sigma_3, \sigma_1^3$}\\\hline
I&$\fC_6$&{\tt(6,2)}&{$\sigma_3\sigma_1^5$}\\\hline
I&$\fC_2\times \fC_6$&{\tt(12,5)}&{$\sigma_1^3, \sigma_3\sigma_1^4$}\\\hline
I&$\fC_6$&{\tt(6,2)}&{$\sigma_1\sigma_2^2$}\\\hline
I&$\fC_6$&{\tt(6,2)}&{$\sigma_2^5$}\\\hline
I&$\fC_3^2$&{\tt(9,2)}&{$\sigma_2^2, \sigma_1\sigma_2^5$}\\\hline
I&$\fC_3\times \fC_6$&{\tt(18,5)}&{$\sigma_2^4, \sigma_1^2\sigma_2$}\\\hline
I&$Q8$&{\tt(8,4)}&{$\sigma_4\sigma_3, \sigma_4^2$}\\\hline
I&$\fC_3\times Q8$&{\tt(24,11)}&{$\sigma_1\sigma_3\sigma_4, \sigma_3\sigma_4^7$}\\\hline
I&$\fD_4$&{\tt(8,3)}&{$\sigma_4^2, \sigma_3$}\\\hline
I&$\fC_3\times \fD_4$&{\tt(24,10)}&{$\sigma_2\sigma_4\sigma_2, \sigma_3\sigma_1^5$}\\\hline
I&$\fC_8$&{\tt(8,1)}&{$\sigma_3\sigma_4\sigma_3$}\\\hline
I&$\fC_{24}$&{\tt(24,2)}&{$\sigma_1^4\sigma_4^7$}\\\hline
I&$\fD_6$&{\tt(12,4)}&{$\sigma_1\sigma_2^2, \sigma_1^2\sigma_2\sigma_3$}\\\hline
I&$\fC_3\times \fS_3$&{\tt(18,3)}&{$\sigma_3, \sigma_2^4$}\\\hline
I&$\fC_3\times \fS_3$&{\tt(18,3)}&{$\sigma_4\sigma_3\sigma_4, \sigma_2^4$}\\\hline
I&$\fC_6\times \fS_3$&{\tt(36,12)}&{$\sigma_2^5, \sigma_3\sigma_1^4$}\\\hline
I&$SD16$&{\tt(16,8)}&{$\sigma_4\sigma_3, \sigma_3$}\\\hline
I&$\fC_3\times SD16$&{\tt(48,26)}&{$\sigma_1^2\sigma_4^7, \sigma_1^2\sigma_3$}\\\hline
I&$\SL_2(\bF_3)$&{\tt(24,3)}&{$\sigma_4^2, \sigma_1\sigma_2^5$}\\\hline
I&$\SL_2(\bF_3)$&{\tt(24,3)}&{$\sigma_1\sigma_2, \sigma_4^2$}\\\hline
I&$\fC_3\times \SL_2(\bF_3)$&{\tt(72,25)}&{$\sigma_3\sigma_2^5\sigma_4^7, \sigma_2^4$}\\\hline
I&$\GL_2(\bF_3)$&{\tt(48,29)}&{$\sigma_4\sigma_3\sigma_4, \sigma_1\sigma_4^7\sigma_2^5\sigma_3$}\\\hline
I&$\fC_3\times \GL_2(\bF_3)$&{\tt(144,122)}&{$\sigma_2, \sigma_4$}
\\\hline\hline
II&$\fC_2$&{\tt(2,1)}&{$ \sigma_1^3 $}\\\hline
II&$\fC_3$&{\tt(3,1)}&{$ \sigma_1^4 $}\\\hline
II&$\fC_6$&{\tt(6,2)}&{$ \sigma_1^5 $}\\\hline
II&$\fC_2^2$&{\tt(4,2)}&{$ \sigma_1^3, \sigma_5^2\sigma_6\sigma_5^5\sigma_6 $}\\\hline
II&$\fC_6$&{\tt(6,2)}&{$ \sigma_5^3\sigma_1^4 $}\\\hline
II&$\fC_2\times \fC_6$&{\tt(12,5)}&{$ \sigma_1^3, \sigma_5^3\sigma_1^5 $}\\\hline
II&$\fC_2^2$&{\tt(4,2)}&{$ \sigma_1^3, \sigma_6 $}\\\hline
II&$\fC_6$&{\tt(6,2)}&{$ \sigma_1\sigma_6 $}\\\hline
II&$\fC_2\times \fC_6$&{\tt(12,5)}&{$ \sigma_1^3, \sigma_1^2\sigma_6 $}\\\hline
II&$\fC_4$&{\tt(4,1)}&{$ \sigma_5^2\sigma_6\sigma_5^5 $}\\\hline
II&$\fC_{12}$&{\tt(12,2)}&{$ \sigma_5^2\sigma_6\sigma_5^5\sigma_1^5 $}\\\hline
II&$\fC_6$&{\tt(6,2)}&{$ \sigma_5\sigma_6\sigma_5^5\sigma_6 $}\\\hline
II&$\fC_6$&{\tt(6,2)}&{$ \sigma_5^4\sigma_1^5 $}\\\hline
II&$\fC_3^2$&{\tt(9,2)}&{$ \sigma_5^2, \sigma_5^2\sigma_1^2 $}\\\hline
II&$\fC_3\times \fC_6$&{\tt(18,5)}&{$ \sigma_5^4\sigma_1^4, \sigma_1 $}\\\hline
II&$\fD_4$&{\tt(8,3)}&{$ \sigma_5^2\sigma_6\sigma_5^5\sigma_6, \sigma_6 $}\\\hline
II&$\fC_3\times \fD_4$&{\tt(24,10)}&{$ \sigma_6, \sigma_5^3\sigma_1 $}\\\hline
II&$\fC_3\rtimes\fC_4$&{\tt(12,1)}&{$ \sigma_5^4, \sigma_5^2\sigma_6\sigma_5^5 $}\\\hline
II&$\fC_3\times \fC_3\rtimes\fC_4$&{\tt(36,6)}&{$ \sigma_5^2\sigma_1^2, \sigma_5\sigma_1\sigma_6 $}\\\hline
II&$\fC_6$&{\tt(6,2)}&{$ \sigma_5^5\sigma_1^5 $}\\\hline
II&$\fC_2\times \fC_6$&{\tt(12,5)}&{$ \sigma_1^3, \sigma_6\sigma_5^5\sigma_6 $}\\\hline
II&$\fC_2\times \fC_6$&{\tt(12,5)}&{$ \sigma_1^3, \sigma_1^2\sigma_5^5 $}\\\hline
II&$\fC_3\times \fC_6$&{\tt(18,5)}&{$ \sigma_5^2\sigma_1^2, \sigma_5^5 $}\\\hline
II&$\fC_6^2$&{\tt(36,14)}&{$ \sigma_5\sigma_1, \sigma_6\sigma_5\sigma_6\sigma_5^5 $}\\\hline
II&$\fD_6$&{\tt(12,4)}&{$ \sigma_5^2\sigma_6, \sigma_5\sigma_6\sigma_5 $}\\\hline
II&$\fC_3\times \fS_3$&{\tt(18,3)}&{$ \sigma_5^4\sigma_1^4, \sigma_6 $}\\\hline
II&$\fC_6\times \fS_3$&{\tt(36,12)}&{$ \sigma_1^2\sigma_6, \sigma_5^5\sigma_6\sigma_5 $}\\\hline
II&$\fC_3\rtimes\fD_4$&{\tt(24,8)}&{$ \sigma_5^3\sigma_6, \sigma_5^2\sigma_6 $}\\\hline
II&$\fC_3\times \fC_3\rtimes\fD_4$&{\tt(72,30)}&{$ \sigma_6\sigma_5^2, \sigma_5^5\sigma_1^4 $}
\\\hline\hline
IV&$C_2$&{\tt(2,1)}&{$\sigma_1^3 $}\\\hline
IV&$C_3$&{\tt(3,1)}&{$\sigma_1^2 $}\\\hline
IV&$C_6$&{\tt(6,2)}&{$\sigma_1^5 $}\\\hline
IV&$C_5$&{\tt(5,1)}&{$\sigma_7^2 $}\\\hline
IV&$C_{10}$&{\tt(10,2)}&{$\sigma_1^3\sigma_7 $}\\\hline
IV&$C_{15}$&{\tt(15,1)}&{$\sigma_7\sigma_1^4 $}\\\hline
IV&$C_{30}$&{\tt(30,4)}&{$\sigma_7\sigma_1^5 $}
\\\hline\hline
VII&$\fC_2$&{\tt(2,1)}&$ \sigma_1^3 $\\\hline
VII&$\fC_2^2$&{\tt(4,2)}&$ \sigma_8^6, \sigma_1^3 $\\\hline
VII&$\fC_6$&{\tt(6,2)}&$ \sigma_8^4\sigma_1^3 $\\\hline
VII&$\fC_2\times \fC_4$&{\tt(8,2)}&$ \sigma_1^3, \sigma_8^3\sigma_1^3 $\\\hline
VII&$\fC_6$&{\tt(6,2)}&$ \sigma_1^3\sigma_8^{10} $\\\hline
VII&$\fC_2\times \fC_6$&{\tt(12,5)}&$ \sigma_1^3, \sigma_8^2 $\\\hline
VII&$\fC_2\times \fC_{12}$&{\tt(24,9)}&$ \sigma_1^3, \sigma_1^3\sigma_8^{11} $
\\\hline\hline
XV&$\fC_2$&{\tt(2,1)}&$ \sigma_1^3 $\\\hline
XV&$\fC_2^2$&{\tt(4,2)}&$ \sigma_1^3, \sigma_9^2 $\\\hline
XV&$\fC_2\times \fC_4$&{\tt(8,2)}&$ \sigma_1^3, \sigma_1^3\sigma_9^3 $\\\hline

\end{longtable}

\bibliographystyle{alpha}
\bibliography{nonlinear}

\end{document}